\documentclass[11pt, a4paper,reqno]{amsart}
\usepackage[utf8]{inputenc}
\usepackage{enumerate}
\usepackage{amsmath, amssymb,amsthm}
\usepackage{mathtools}
\usepackage{leftidx}
\numberwithin{equation}{section}
\setcounter{tocdepth}{1} 
\usepackage[left=2.5cm, right=2.5cm,bottom=4cm]{geometry}
\usepackage{hyperref}
\usepackage{xcolor}
\definecolor{my-black}{rgb}{0,0,0}
\definecolor{my-blue}{rgb}{0,0,0.8}
\definecolor{my-red}{rgb}{0.8,0,0} 
\definecolor{my-green}{rgb}{0,0.5,0}
\hypersetup{colorlinks,
	linkcolor	= {my-blue},
	citecolor	= {my-red},
	urlcolor		= {my-black}
}
\usepackage{graphicx}
\usepackage{tikz-cd}


\theoremstyle{plain} 

\newtheorem{lemma}{Lemma}[section]
\newtheorem{theorem}{Theorem}

\newtheorem{proposition}{Proposition}[section]

\newtheorem*{theorem*}{Theorem} 

\theoremstyle{definition} 
\newtheorem{remark}{Remark}[section]

\theoremstyle{remark}
\newtheorem*{remark-non}{Remark}

\DeclareMathOperator{\supp}{supp}
\DeclareMathOperator{\vol}{vol}

\newcommand{\R}{\mathbb{R}}

\newcommand{\Ss}{\mathcal{S}}

\renewcommand{\d}{\mathrm{d}}

\newcommand{\T}{\mathcal{T}}
\newcommand{\eps}{\varepsilon}

\title[Uniform stability for Pr\'ekopa-Leindler]{Improved stability versions \\
of the Pr\'ekopa--Leindler inequality}
\author{Alessio Figalli and Jo{\~a}o P.G. Ramos}

\begin{document}

\begin{abstract} We consider the problem of stability for the Pr\'ekopa--Leindler inequality. Exploiting properties of the transport map between radially decreasing functions and a suitable functional version of the trace inequality, we obtain a \emph{uniform} stability exponent for the Pr\'ekopa--Leindler inequality.

Our result yields an exponent not only uniform in the dimension but also in the log-concavity parameter $\tau = \min(\lambda,1-\lambda)$ associated with its respective version of the Pr\'ekopa--Leindler inequality.  
As a further application of our methods, we prove a \emph{sharp} stability result for \emph{log-concave} functions in dimension 1, which also extends to a sharp stability result for log-concave \emph{radial} functions in higher dimensions. 

\bigskip

\begin{center}
  {\it In honor of R. T. Rockafellar, for his 90th birthday.}
\end{center}

\
\end{abstract}

\maketitle

\section{Introduction} 

 In this paper, we deal with stability versions of the Pr\'ekopa--Leindler inequality. This inequality asserts that, for measurable non-negative functions $h,f,g:\R^n \to \R_+$ which satisfy 
\begin{equation}\label{eq:PL-condition}
h\left( \lambda x + (1-\lambda) y\right) \ge f(x)^{\lambda} g(y)^{1-\lambda}, \qquad \forall \, x,y\ \in \R^n, 
\end{equation}
then 
\begin{equation}\label{eq:PL-conclusion}
\int_{\R^n} h \ge \left( \int_{\R^n} f\right)^{\lambda} \left( \int_{\R^n} g \right)^{1-\lambda},
\end{equation}
with equality if and only if $h$ is log-concave, and for some $x_0 \in \R^n, a>0,$ we have $$f(x) = a^{-\lambda} h(x-\lambda x_0), \quad g(x) = a^{1-\lambda} h(x+(1-\lambda)x_0).$$ Such an inequality is of pivotal importance in areas such as convex geometry and high-dimensional probability, and is intimately related to the \emph{Brunn--Minkowski} inequality, which asserts that, for $A,B \subset \R^n$ measurable sets with $A+B := \{a+b, a \in A, b \in B\}$ measurable, we have
\begin{equation}\label{eq:B-M} 
|A+B|^{1/n} \ge |A|^{1/n}+|B|^{1/n},
\end{equation}
with equality if and only if $A,B$ are \emph{both} convex sets, and $A$ is a translated scaled copy of $B.$ Indeed, the Pr\'ekopa--Leindler is not only a functional version of Brunn--Minkowski, but it also \emph{implies} the latter through a scaling argument. 

When it comes to the question of \emph{stability} for both the Pr\'ekopa--Leindler and Brunn--Minkowski inequalities, the matters are much more subtle: indeed, in the Brunn--Minkowski case, the first work in that direction was achieved by the first author, Maggi, and Pratelli \cite{Figalli-Maggi-Pratelli-2}, which proved a sharp stability version when the sets $A,B$ satisfying almost equality in \eqref{eq:B-M} are already convex. In spite of this initial contribution, a proof of stability of Brunn--Minkowski, even in the non-sharp case, would only be addressed in the works of Christ in a series of papers \cite{Christ-1,Christ-2,Christ-3}, with an explicit exponent of stability only being achieved later by the works of Jerison and the first author \cite{Figalli-Jerison-1}. Other relevant works addressing the question of stability for that inequality are \cite{Figalli-Jerison-2, van-Hintum-Spink-Tiba-1, Kolesnikov-Milman}. 

A major breakthrough in that regard is the settling of the sharp stability for the Brunn--Minkowski in the general case. Indeed, for $n=2,$ a proof was found by van Hintum, Spink, and Tiba \cite{van-Hintum-Spink-Tiba-2}, and, for the $n\ge 3$ case, in the recent paper by the first author, van Hintum, and Tiba \cite{Figalli-van-Hintum-Tiba}. 

On the other hand, stability results for the Pr\'ekopa--Leindler inequality are comparatively significantly less abundant at the moment. We mention initial works by B\"or\"oczky and Ball \cite{Boroczky-Ball-1,Boroczky-Ball-2} in the one-dimensional log-concave case, the recent contribution by B\"or\"oczky and De \cite{Boroczky-De} in the higher-dimensional log-concave setting, and the contributions by Rossi and Salani \cite{Rossi-Salani}, which deal with other non-degenerate cases of the Borell--Brascamp--Lieb inequality. 

Stability for the case of measurable functions, even with a non-sharp exponent, remained open until the recent work of B\"or\"oczky and the authors of this manuscript \cite{Boroczky-Figalli-Ramos}, where it was shown that if $h,f,g$ are as in the statement of the Pr\'ekopa--Leindler inequality, such that
\begin{equation}\label{eq:PL-almost-eq}
\int_{\R^n} h(x) \, \d x \le (1+\varepsilon) \left( \int_{\R^n} f(x) \, \d x\right)^{\lambda} \left( \int_{\R^n} g(x) \, \d x \right)^{1-\lambda}, 
\end{equation}
then there is a log-concave function $\tilde{h}:\R^n\to \R_+$ and $a > 0, x_0 \in \R^n$  such that 
\begin{align*}
\int_{\R^n} |f(x) - a^{-\lambda} \tilde{h}(x-\lambda x_0)| \, \d x & \le C(\tau) \varepsilon^{\alpha_n(\tau)} \int_{\R^n} f(x) \, \d x, \cr 
\int_{\R^n} |g(x) - a^{1-\lambda} \tilde{h}(x+(1-\lambda)x_0)| \, \d x & \le C(\tau) \varepsilon^{\alpha_n(\tau)} \int_{\R^n} g(x) \, \d x, \cr
\int_{\R^n} |h-\tilde{h}| & \le C(\tau) \varepsilon^{\alpha_n(\tau)} \int_{\R^n} h(x) \, \d x.
\end{align*}
in all dimensions $n \ge 1,$ with a computable exponent $\alpha_n(\tau)>0$. It is important to note that the exponent $\alpha_n(\tau)$ obtained in \cite{Boroczky-Figalli-Ramos} is highly dependent on the dimension and on $\tau = \min(\lambda,1-\lambda).$

The purpose of this manuscript is to improve the previous result in different contexts. In fact, the first result we present here is a proof of a \emph{uniform} stability exponent for the Pr\'ekopa--Leindler inequality for general functions in \emph{any} dimension, for \emph{any} $\lambda \in (0,1)$: 

\begin{theorem}\label{thm:uniform-high-d} There is an absolute constant $c_0 > 0$ such that the following holds. 
Let $h,f,g:\R^n \to \R$ be non-negative measurable functions satisfying \eqref{eq:PL-condition}. Suppose, additionally, that they satisfy \eqref{eq:PL-almost-eq}. Then there are $a>0, x_0 \in \R^n$, and a log-concave function $\tilde{h}:\R^n\to\R_+$, such that 
\begin{align*} 
\int_{\R^n} |f(x) - a^{-\lambda} \tilde{h}(x- \lambda x_0)| \, \d x \le C_n(\tau) \varepsilon^{c_0} \int_{\R^n} f(x) \, \d x, \cr 
\int_{\R^n} |g(x) - a^{1-\lambda} \tilde{h}(x+(1-\lambda)x_0)| \, \d x \le C_n(\tau) \varepsilon^{c_0} \int_{\R^n} g(x) \, \d x, \cr 
\int_{\R^n} |h(x) - \tilde{h}(x)| \, \d x \le C_n(\tau) \varepsilon^{c_0} \int_{\R^n} h(x) \, \d x. 
\end{align*} 
\end{theorem}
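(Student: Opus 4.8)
The plan is to reduce the general statement to a one-dimensional problem via the transport-map technique, and then to upgrade the one-dimensional stability to a dimension- and $\tau$-independent exponent using a suitable functional trace inequality. First I would normalize: using the scaling freedom in \eqref{eq:PL-condition}–\eqref{eq:PL-conclusion}, we may assume $\int f = \int g = 1$, and then \eqref{eq:PL-conclusion} together with \eqref{eq:PL-almost-eq} forces $\int h \le 1+\varepsilon$. The key preliminary reduction is to pass to the \emph{radially decreasing rearrangements} $f^\star, g^\star, h^\star$; since rearrangement preserves integrals and does not destroy the Pr\'ekopa–Leindler structure (one checks that if $h,f,g$ satisfy \eqref{eq:PL-condition} then so do appropriate rearranged versions, by the one-dimensional Pr\'ekopa–Leindler applied on level sets), the deficit stays controlled. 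The point of this step is that between two radially decreasing functions the Brenier transport map is radial, hence essentially one-dimensional, so the multidimensional problem collapses onto the behaviour of the layer-cake profiles.

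Next I would analyze the transport map $T$ pushing $f^\star$ (or rather its associated measure) onto $g^\star$, writing the Pr\'ekopa–Leindler deficit as an integral over $x$ of a pointwise quantity measuring the failure of the arithmetic–geometric-type inequality $h^\star(\lambda x + (1-\lambda)T(x)) \ge f^\star(x)^\lambda g^\star(T(x))^{1-\lambda}$ to be an equality, combined with the Jacobian inequality $\det(\lambda I + (1-\lambda)DT) \ge (\det DT)^{1-\lambda}$. Smallness of the deficit then yields, via a quantitative AM–GM estimate, that on a large-measure set $DT$ is close to a multiple of the identity and $f^\star, g^\star, h^\star$ are close to translates/dilates of a common profile. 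The crucial new ingredient — and the source of the \emph{uniform} exponent $c_0$ — is a \emph{functional trace inequality}: the stability of the inequality $\operatorname{tr}(A)/n \ge (\det A)^{1/n}$ (equivalently, the AM–GM inequality) can be exploited so that the loss in passing from pointwise closeness to global $L^1$-closeness does not accumulate powers of $1/n$ or $1/\tau$. Here one wants an estimate of the shape: if $\lambda\,\mathrm{tr}(A)+(1-\lambda)\,n \ge n\,(\det A)^{\lambda/n} + n\,\delta$ fails to be sharp by $\delta$, then $A$ is within $C\sqrt{\delta}$ (in a normalized Hilbert–Schmidt sense, \emph{no $n$- or $\tau$-dependence in the exponent}) of the identity; integrating this against the reference measure and using Cauchy–Schwarz converts the integrated deficit bound into the claimed $\varepsilon^{c_0}$ bound.

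I expect the \textbf{main obstacle} to be precisely the extraction of this uniform trace-type stability estimate and its correct integration against the transport measure: the naive AM–GM stability carries a constant that degenerates as $\tau\to 0$, so one must instead split the integration region according to whether $f^\star/g^\star$ (after rescaling) is comparable or not, handle the ``comparable'' region with the Hilbert–Schmidt trace estimate, and control the ``incomparable'' region directly by the measure-of-bad-set bound coming from the one-dimensional analysis, checking that neither piece introduces a power of $\tau$ or $n$ in the exponent. A secondary technical point is the reconstruction step: after establishing that $f^\star, g^\star, h^\star$ are $L^1$-close to dilates of a common log-concave $\tilde h^\star$, one must transfer this back to the original (non-rearranged) $f,g,h$, which requires a stability statement for rearrangement itself — showing that if $f^\star$ is $L^1$-close to a log-concave function then $f$ is $L^1$-close to a translate of that function — using the equality analysis in the layer-cake/Brunn–Minkowski step to pin down the common center $x_0$ and scale $a$. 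The constants $C_n(\tau)$ are allowed to blow up; only the exponent must stay absolute, which is what makes the splitting argument affordable.
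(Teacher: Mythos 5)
Your proposal misreads what the paper means by ``functional trace inequality'' and, as a consequence, misses the mechanism by which shape information is recovered. The trace inequality used in the paper (Proposition~\ref{prop:trace-log-conc}) is a BV-type trace inequality on the real line,
$\int_{\R}|\Phi|\,|\d f| \le \int_{\R}|f|\,|\Phi'|\,\d x$,
proved by Riemann--Stieltjes integration by parts; it has nothing to do with the matrix trace/determinant AM--GM inequality that you invoke. The role of this 1D inequality in the paper is to bound $\int f_1|T'-1|$ against the PL deficit, and that estimate is carried out \emph{only in dimension one} (for even, radially decreasing profiles), not for a radial transport map in $\R^n$.

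The more serious gap is the ``reconstruction step.'' Rearranging destroys all shape information beyond the distribution function, so even if you succeed in showing that $f^\star, g^\star, h^\star$ are $L^1$-close to dilates of a common log-concave profile, this does \emph{not} yield that $f$ (or $g$, or $h$) is close to a log-concave function. Take $f=\chi_E$ with $|E|=1$ and $E$ very far from convex: then $f^\star$ is the indicator of a ball, hence already log-concave, but $f$ itself is far from every log-concave function. No ``stability statement for rearrangement'' can close this gap on its own; what is needed is genuine Brunn--Minkowski stability applied to the actual level sets or hypographs. The paper obtains this by a different route: the one-dimensional transport estimate is used to show that the distribution functions $F,G,H$ are uniformly close (Proposition~\ref{prop:levels-close}), from which one deduces that the \emph{original} (un-rearranged) truncated log-hypographs $\mathcal S_f,\mathcal S_g,\mathcal S_h$ in $\R^{n+1}$ form a near-extremal triple for Brunn--Minkowski, and then the sharp quantitative Brunn--Minkowski theorem of Figalli--van Hintum--Tiba is invoked in $\R^{n+1}$ to conclude that the hypographs are close to translates of a common convex body, from which the log-concave $\tilde h$ and the translation parameters $a,x_0$ are extracted. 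This Brunn--Minkowski stability input is absent from your outline, and without it the argument cannot produce the claimed log-concave approximant for the original functions. Finally, the pure transport-map strategy in $\R^n$ that you describe resembles the proof of Theorem~\ref{thm:radial-sharp}, which assumes log-concavity (or radiality plus log-concavity); for arbitrary measurable $f,g$ there is no control on the regularity of the Brenier map that would allow a quantitative Jacobian/AM--GM stability estimate to be integrated without additional truncation arguments.
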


\begin{remark}
The proof of Theorem \ref{thm:uniform-high-d} may be divided into two main parts. The first is a new proof of the one-dimensional result in  \cite{Boroczky-Figalli-Ramos}. For that, instead of resorting to a four-point inequality as done in that manuscript, we use an idea similar to the one used in  \cite{Boroczky-Figalli-Ramos} but for higher-dimensional functions. In effective terms, by a similar truncation argument, we arrive at the fact that, if we wish to use the stability results for the Brunn--Minkowski inequality available in dimension 2, we only need to prove that the distribution functions of $f$ and $g$ as in the statement are close in $L^1,$ upon appropriate scaling and translating. 

It is equivalent to work with even, radially decreasing functions in dimension one. In order to prove a uniform stability for those, we consider the transport map between them. Upon appropriately cutting both $f$ and $g$ at a well-controlled level set, we see that the derivative of the transport map is bounded except for a set of small measure. By employing a trace-like inequality on the set where the derivative is bi-Lipschitz, we are able to show the desired distributional result in one dimension, which concludes the one-dimensional version together with the results from \cite{van-Hintum-Spink-Tiba-2}. 

For the higher-dimensional case, we first reduce the matter to the level-set analysis mentioned before. There, by a proposition originally from \cite{Boroczky-Figalli-Ramos}, the one-dimensional estimates automatically yield bounds on the distribution functions in higher dimensions. This guarantees that the log-hypographs in question are close by, in terms of a uniform parameter. At that point, instead of using the result in \cite{Figalli-Jerison-2} (as previously done in \cite{Boroczky-Figalli-Ramos}), we exploit the recent \emph{sharp} stability result by the first author, van Hintum, and Tiba \cite{Figalli-van-Hintum-Tiba}. This enables us to not lose a dimensional constant in the exponents. Since the $\tau$-dependency was only stemming from either the one-dimensional results -- from which we removed such a dependency -- or the Brunn--Minkowski stability problem, this argument yields Theorem \ref{thm:uniform-high-d} as a consequence. 
\end{remark}

The next result that we present in this manuscript deals with a question originally raised by B\"or\"oczky and Ball in \cite{Boroczky-Ball-1}. In analogy to the Brunn-Minkowski theory presented above, where sharp stability for convex sets may be derived from an argument using a Poincar\'e-type inequality, one may wonder what happens if we suppose beforehand that either $h$ or $f$ and $g$ are already \emph{log-concave}. In the one-dimensional case, we are able to prove a \emph{sharp} stability result. 

\begin{theorem}\label{thm:log-conc-sharp} Let $h,f,g:\R\to \R_+$ be measurable functions satisfying \eqref{eq:PL-condition}. Suppose also that either $h$ is log-concave, or both $f$ and $g$ are log-concave, and that  
\begin{equation}\label{eq:PL-almost-eq-1}
\int h \le (1+\varepsilon) \left( \int f \right)^{\lambda} \left( \int g \right)^{1-\lambda}. 
\end{equation}
Then there exist $C,a>0, \, x_0 \in \R$, and $\tilde{h}:\R\to\R_+$ a log-concave function, such that 
\begin{align}\label{eq:almost-eq-conclusion}
\int_{\R} |f(x) - a^{-\lambda} \tilde{h}(x+\lambda x_0)| \, \d x \le C \left( \frac{\varepsilon}{\tau}\right)^{1/2} \int_{\R} f(x) \, \d x, \cr 
\int_{\R} |g(x) -a^{1-\lambda} \tilde{h}(x-(1-\lambda)x_0)| \, \d x \le C \left( \frac{\varepsilon}{\tau}\right)^{1/2} \int_{\R} g(x) \, \d x, \cr 
\int_{\R} |h(x) - \tilde{h}(x)| \, \d x \le C \left( \frac{\varepsilon}{\tau}\right)^{1/2} \int_{\R} h(x) \, \d x,
\end{align}
where $\tau = \min(\lambda,1-\lambda).$ This result is sharp, in the sense that there are functions $h,f,g$ satisfying \eqref{eq:PL-condition} and \eqref{eq:PL-almost-eq} for which \eqref{eq:almost-eq-conclusion} is reversed (with a smaller constant $C$). 
\end{theorem}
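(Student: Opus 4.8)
\section*{Proof proposal for Theorem \ref{thm:log-conc-sharp}}

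The plan is to reduce the log-concave case to a one-dimensional optimization/Taylor-expansion argument, exploiting the explicit structure that log-concavity forces. First I would observe that, by the equality analysis of the Prékopa--Leindler inequality, when $h$ is log-concave the extremizers are essentially translates/dilates of a single log-concave profile; so the content is: \emph{almost}-equality in \eqref{eq:PL-almost-eq-1} forces $f,g$ to be \emph{almost} log-concave and almost aligned. I would first treat the case where $f$ and $g$ are already log-concave (the other case reduces to it after replacing $h$ by the log-concave envelope compatible with \eqref{eq:PL-condition}, which only improves the deficit). Writing $f = e^{-\phi}$, $g = e^{-\psi}$ with $\phi,\psi$ convex, the optimal $h$ in \eqref{eq:PL-condition} is the inf-convolution $h_0(z) = \inf\{\lambda f(x)\cdots\}$, i.e. $-\log h_0 = (\lambda \phi(\cdot/\lambda)^{-1}) \,\square\, ((1-\lambda)\psi(\cdot/(1-\lambda))^{-1})$ in the appropriate sense; then $\int h \ge \int h_0$ and the deficit controls $\int(h - h_0)$, so it suffices to bound $f$ and $g$ against the PL-extremizer built from $h_0$.

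The key step is a one-dimensional \emph{change of variables / transport} computation. Let $T = \psi' \circ (\phi')^{-1}$ (equivalently, the monotone transport pushing the measure $f\,dx$ to a multiple of $g\,dx$ after normalization); in one dimension this is explicit via the distribution functions. A direct calculation — which is the analogue, for functions, of the classical one-dimensional Brunn--Minkowski equality computation — shows that
\begin{equation*}
\int h_0 - \Bigl(\int f\Bigr)^{\lambda}\Bigl(\int g\Bigr)^{1-\lambda} \;\ge\; c\,\tau \int_{\R} \bigl(T'(x) - c_0\bigr)^2 \, w(x)\, dx
\end{equation*}
for a suitable weight $w$ and constant $c_0$ (the AM--GM gap $\lambda a + (1-\lambda) b \ge a^\lambda b^{1-\lambda}$ produces, after a second-order Taylor expansion, exactly a term quadratic in $T' - 1$ with a prefactor $\lambda(1-\lambda) \asymp \tau$). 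Hence the hypothesis \eqref{eq:PL-almost-eq-1} gives $\int (T'-c_0)^2 w \le C\varepsilon/\tau$, i.e.\ an $L^2$-smallness of $T' - c_0$. By Cauchy--Schwarz this upgrades to an $L^1$ bound $\int |T' - c_0|\,w \le C(\varepsilon/\tau)^{1/2}$, and integrating, $T$ is $C(\varepsilon/\tau)^{1/2}$-close (after an affine normalization $x \mapsto c_0 x + x_0$) to the identity in the relevant $L^1$ sense. Translating back through the distribution functions, this says precisely that $g$ is $C(\varepsilon/\tau)^{1/2}$-close in $L^1$ to the appropriate affine rescaling of $f$; feeding this into \eqref{eq:PL-condition} and using that both are log-concave pins down $\tilde h$ (take $\tilde h = h_0$, or the common profile) and yields all three estimates in \eqref{eq:almost-eq-conclusion}.

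For sharpness, I would exhibit an explicit one-parameter family: take $f = g = $ a fixed log-concave function (say a tent function or a Gaussian) and perturb, e.g., $g_t(x) = f(x)(1 + t\rho(x))$ with $\rho$ chosen so that $\int g_t = \int f$ to first order and $T'_t - 1 \asymp t \rho'$; then the deficit is $\asymp \tau t^2$ while the $L^1$ distance is $\asymp t$, so $\varepsilon \asymp \tau t^2$ forces the $L^1$ distance $\asymp t \asymp (\varepsilon/\tau)^{1/2}$, and one checks the reverse inequality holds with a smaller constant. The main obstacle I anticipate is not the Taylor expansion itself but making the reduction rigorous for \emph{general} (merely measurable) $f,g$ satisfying only \eqref{eq:PL-condition}: one must first argue that near-equality forces $f$ and $g$ to be $L^1$-close to log-concave functions (this is where the hypothesis that $h$ — or $f,g$ — is \emph{already} log-concave is essential, so that one side of the problem is rigid from the start), and then transfer the quadratic deficit estimate through this approximation without losing the sharp power $1/2$ or picking up a worse $\tau$-dependence. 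Handling the degeneracies of the transport map $T$ (where $f$ or $g$ vanishes, or $T'$ blows up) requires the same kind of level-set truncation alluded to in the remark after Theorem \ref{thm:uniform-high-d}, but since we are now in the rigid log-concave setting these truncations are quantitatively harmless.
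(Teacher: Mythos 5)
Your high-level plan coincides with the paper's: use the one-dimensional monotone transport map, derive a quadratic (in $T'-1$) deficit estimate from the transport proof of Pr\'ekopa--Leindler, pass to an $L^1$ bound via Cauchy--Schwarz, and obtain sharpness by perturbing a Gaussian. The sharpness construction you sketch is essentially the paper's (odd perturbation of $e^{-\pi x^2}$, deficit $\asymp \tau\delta^2$, $L^1$ distance $\asymp\delta$). But there are several concrete gaps in the middle, and one of them is exactly where the paper's new technical input lies.

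First, the formula $T = \psi'\circ(\phi')^{-1}$ is not the monotone transport map between the measures $f\,\d x$ and $g\,\d x$; in one dimension that map is $T = G^{-1}\circ F$ where $F,G$ are the cumulative distribution functions (this is Proposition \ref{prop:f-g-transp}). Second, and most importantly, the step \emph{``translating back through the distribution functions, this says precisely that $g$ is close in $L^1$ to $f$''} is not a one-liner: the deficit estimate bounds $\int f\,|T'-1|$, but you need $\int|f-g| = \int |f(x) - f(S(x))S'(x)|\,\d x$, and the term $\int |f - f\circ S|$ does not obviously reduce to $\int f|T'-1|$. The paper's Proposition \ref{prop:trace-log-conc} (a trace-like inequality $\int|\Phi|\,|\d f| \le \int |f|\,|\Phi'|$, proved by Riemann--Stieltjes integration by parts using the unimodality of $f$) is precisely the bridge that makes this step go through; without something of that kind the claimed $L^1$ closeness does not follow from the $T'$ estimate. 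Third, the theorem requires an estimate on $\int|h - \tilde h|$, not just on $\int|f-g|$; the paper's Lemma \ref{lemma:close-f-g-h} carries this out with a nontrivial argument via the auxiliary function $\tilde h\bigl(\tfrac{x+T(x)}{2}\bigr) = \sqrt{f(x)g(T(x))}$ and the change of variables $x\mapsto\tfrac{x+T(x)}{2}$ — this is not automatic. Fourth, to apply Cauchy--Schwarz you must know $T'$ is bounded away from $0$ and $\infty$ on the support where you work; the paper's Lemma \ref{lemma:bounds} obtains $T'\in(1/16,16)$ after cutting tails of mass $\sim\varepsilon$, and this cutting relies on the quantitative pointwise comparison estimates for log-concave densities in Propositions \ref{prop:first-log-conc} and \ref{prop:log-conc} — calling the truncation ``quantitatively harmless'' is the right instinct, but it is not generic and needs proof. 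Finally, you propose to carry the $\tau$ factor directly through the AM--GM gap at general $\lambda$; the paper instead treats $\lambda=1/2$ cleanly and then passes to general $\lambda$ by the interpolation lemma of B\"or\"oczky--De (log-concavity of $t\mapsto\int h_t$ plus Proposition \ref{prop:interpolation-log-concave}). Your direct route could in principle work, but then one must check that the $(T')^{1-\lambda}$ denominator and the Cauchy--Schwarz step don't introduce a worse $\tau$-dependence — the interpolation neatly sidesteps this.
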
 

\begin{remark} The proof of this result exploits the interplay between $f$ and $g$ provided by the transport map between them (when looked at as probability measures on the real line). This allows us to exploit a reduction from \cite{Boroczky-Ball-1}, which states that it is sufficient to prove that $f$ and $g$ are close to each other, and then stability follows in the same order. 
In the proof, a key fact is to show that outside an interval where both $f$ and $g$ have negligible mass, the transport map between the densities $f$ and $g$ is universally bi-Lipschitz continuous - which is a property even \emph{stronger} than the one needed for us to prove Theorem \ref{thm:uniform-high-d}.
Then, the conclusion of the proof runs through an inequality that follows directly from the optimal transport proof of the Pr\'ekopa--Leindler inequality. A crucial step is the use of a suitable trace-like inequality for BV functions showing that the distance between $f$ and $g$ is bounded by the quantity $\int_{\R} f(x) |T'(x)-1| \, \d x$, which, by the reductions above, is in turn bounded by $C \varepsilon^{1/2}.$ Thanks to the fact that we are working in dimension 1, our desired trace-like inequality follows from a simple integration by parts argument. 
\end{remark}

A consequence of the methods used to prove Theorem \ref{thm:log-conc-sharp} is a sharp stability result for log-concave \emph{radial} functions in higher dimensions: 

\begin{theorem}\label{thm:radial-sharp} Let $h,f,g:\R^n \to \R_+$ be radial functions satisfying \eqref{eq:PL-condition}. Suppose moreover that either $h$ is log-concave or that $f,g$ are log-concave, and that
\[
\int_{\R^n} h(x) \, \d x \le (1+\varepsilon) \left( \int_{\R^n} f(x) \, \d x \right)^{\lambda} \left( \int_{\R^n} g(x) \, \d x\right)^{1-\lambda}.
\]
Then there is a dimensional absolute constant $C_n > 0,$ a scalar $a>0$ and a radial log-concave function $\tilde{h}$ such that 
\[
\int_{\R^n} |f(x) - a^{\lambda} \tilde{h}(x)| \, \d x \le C_n \left( \frac{\varepsilon}{\tau}\right)^{1/2} \int_{\R^n} f(x) \, \d x, 
\]
\[
\int_{\R^n} |g(x) - a^{-(1-\lambda)} \tilde{h}(x)| \, \d x \le C_n \left( \frac{\varepsilon}{\tau}\right)^{1/2} \int_{\R^n} g(x) \, \d x,
\]
\[
\int_{\R^n} |h(x) - \tilde{h}(x)| \, \d x \le C_n \left( \frac{\varepsilon}{\tau}\right)^{1/2} \int_{\R^n} h(x) \, \d x. 
\]
\end{theorem}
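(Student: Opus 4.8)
The plan is to re-run, along the radial profiles, the optimal-transport argument behind Theorem~\ref{thm:log-conc-sharp}; the one genuinely new feature is that the relevant transport map is a radial map on $\R^n$ whose Jacobian has only two distinct eigenvalues. By the scaling invariances of \eqref{eq:PL-condition} and of the deficit I normalize $\int_{\R^n}f=\int_{\R^n}g=1$, so $\int_{\R^n}h\le1+\varepsilon$; radiality forces the translation parameter to vanish and the mass normalization forces $a=1$, so it suffices to find a radial log-concave $\tilde h$ close to $h$. Following the proof of Theorem~\ref{thm:log-conc-sharp} — the reduction of B\"or\"oczky--Ball \cite{Boroczky-Ball-1} in its radial form, together with the truncation scheme of \cite{Boroczky-Figalli-Ramos} — in both branches of the dichotomy one reduces to the case in which $f,g$ are log-concave radial with universally controlled profiles $F,G$ (pinched between dimensional constants on a fixed ball, and with a controlled amount of tail mass), the claim becoming $\|f-g\|_{L^1(\R^n)}\le C_n(\varepsilon/\tau)^{1/2}$. (Here I use that a radial log-concave function on $\R^n$ has a log-concave, non-increasing profile.)

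Let $T$ be the Brenier map sending $f\,\d x$ to $g\,\d x$; by radial symmetry $T(x)=t(|x|)\,x/|x|$ with $t$ non-decreasing and $t(0)=0$, and $\nabla T(x)$ has radial eigenvalue $t'=t'(|x|)$ and $(n-1)$-fold tangential eigenvalue $t/r:=t(|x|)/|x|$, both staying bounded near the origin because $t(0)=0$. With $z(x)=\lambda x+(1-\lambda)T(x)$, changing variables in $z$, using $h(z(x))\ge f(x)^{\lambda}g(T(x))^{1-\lambda}$ and the Monge--Amp\`ere identity $g(T(x))\det\nabla T(x)=f(x)$, the optimal-transport proof of Pr\'ekopa--Leindler gives
\[
1+\varepsilon\ \ge\ \int_{\R^n}h\ \ge\ \int_{\R^n}f(x)\,\Phi(t')\,\Phi(t/r)^{\,n-1}\,\d x,\qquad \Phi(\mu):=\frac{\lambda+(1-\lambda)\mu}{\mu^{1-\lambda}}\ \ge\ 1,
\]
the right-hand integrand being just the product, over the eigenvalues of $\nabla T(x)$, of the one-variable weighted AM--GM factors, with $\Phi(\mu)=1$ iff $\mu=1$. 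Since $\Phi(\mu)-1$ is bounded below by a multiple of $\tau$ for $\mu$ outside a fixed interval $[1/M,M]$, the complement of $\Omega:=\{x:\ t',t/r\in[1/M,M]\}$ has $f$-measure $O(\varepsilon/\tau)$; and since $\Phi(\mu)-1\ge c\,\tau\,(\mu-1)^2$ on $[1/M,M]$ while $\Phi(t')\Phi(t/r)^{n-1}-1\ge(\Phi(t')-1)+(n-1)(\Phi(t/r)-1)$, the deficit yields
\[
\int_{\Omega}f(x)\,\big(|t'-1|^2+|t/r-1|^2\big)\,\d x\ \lesssim_n\ \frac{\varepsilon}{\tau},
\]
i.e.\ $\nabla T-I$ is $O_n((\varepsilon/\tau)^{1/2})$ in $L^2(f\,\d x)$ away from a set of $f$-measure $O(\varepsilon/\tau)$.

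It remains to convert this into $\|f-g\|_{L^1(\R^n)}\le C_n(\varepsilon/\tau)^{1/2}$ through a trace-type inequality. From the radial Monge--Amp\`ere identity $F(r)=G(t(r))\,t'(r)(t(r)/r)^{n-1}$ one writes, for a.e.\ $r$,
\[
F(r)-G(r)=G(t(r))\big(t'(r)(t(r)/r)^{n-1}-1\big)+\big(G(t(r))-G(r)\big).
\]
On the bulk, where $F,G$ are comparable to constants and $T$ is universally bi-Lipschitz, multiplying by the radial weight $r^{n-1}$: the first term is $\lesssim_n F(r)\big(|t'(r)-1|+|t(r)/r-1|\big)r^{n-1}$ (the Jacobian factor $G(t(r))$ being absorbed into an $F$-weight), and the second is $\lesssim_n F(r)\,|t(r)/r-1|\,r^{n-1}$ (using the universal Lipschitz bound of the log-concave profile $G$, the identity $|t(r)-r|=r\,|t(r)/r-1|$, and that $r$ is bounded on the bulk). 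Integrating in $r$, applying Cauchy--Schwarz against $F(r)\,r^{n-1}\,\d r$, using $\int F\,r^{n-1}\,\d r\lesssim1$ together with the $L^2$ bound of the previous paragraph, and absorbing the $O(\varepsilon/\tau)$ contributions of the complement of $\Omega$ and of the truncated tails, one gets $\|f-g\|_{L^1(\R^n)}=\omega_{n-1}\int_0^\infty|F-G|\,r^{n-1}\,\d r\le C_n(\varepsilon/\tau)^{1/2}$. Feeding this into the reduction of the first paragraph produces $\tilde h$ and $a$ with the asserted estimates.

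I expect the main obstacle to be this last step: establishing the trace-type inequality in the weighted radial setting with no loss in the exponent, so that $L^2(f)$-smallness of $\nabla T-I$ controls $\|f-g\|_{L^1(\R^n)}$ with a square root — making the radial weight $r^{n-1}$ and the bi-Lipschitz bounds cooperate, and cleanly absorbing the contributions of the exceptional set and of the truncated tails. It is precisely the radial reduction that renders this inequality essentially one-dimensional — where, as in \cite{Boroczky-Ball-1} and in the proof of Theorem~\ref{thm:log-conc-sharp}, it follows from a plain integration by parts — and hence sharp; for general log-concave functions the corresponding trace inequality is weaker, which is the source of the non-sharp exponent in Theorem~\ref{thm:uniform-high-d} and the reason the sharp statement is confined to the radial case. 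A secondary technical matter, handled as in \cite{Boroczky-Figalli-Ramos}, is the reduction of the ``$h$ log-concave'' branch to $f,g$ of universally controlled shape and the bookkeeping showing, via the deficit, that the set where $\nabla T$ is far from the identity carries $f$-mass $O(\varepsilon/\tau)$.
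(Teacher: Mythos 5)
Your overall strategy coincides with the paper's: normalize, pass to the radial transport map $T(x)=t(|x|)\,x/|x|$ with Jacobian eigenvalues $t'(r)$ and $(t(r)/r)$ of multiplicity $1$ and $n-1$, rerun the optimal-transport proof of Pr\'ekopa--Leindler to control $\nabla T-I$ in $L^2(f\,\d x)$ by the deficit, and then feed an $L^1$ estimate on $f-g$ into the B\"or\"oczky--Ball reduction (the paper's Lemma \ref{lemma:close-f-g-h}). Two differences are worth remarking on. First, you handle general $\lambda$ at once by exploiting $\Phi(\mu)-1\gtrsim\tau(\mu-1)^2$; the paper instead proves the $\lambda=\tfrac12$ case (where Lemma \ref{eq:lemma-square} gives a clean two-variable lower bound) and then passes to general $\lambda$ via Lemma \ref{lemma:integral-log-conc} and Proposition \ref{prop:interpolation-log-concave}. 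Both routes work; yours is more direct, while the paper's localizes the $\lambda$-dependence in a single separate interpolation step. Second — and this is where your sketch has a genuine gap — is your treatment of the ``translation'' term $G(t(r))-G(r)$.

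Concretely, you bound $|G(t(r))-G(r)|\,r^{n-1}\lesssim_n F(r)\,|t(r)/r-1|\,r^{n-1}$ by invoking a ``universal Lipschitz bound of the log-concave profile $G$'' together with ``$F,G$ pinched between dimensional constants on a fixed ball'' and ``$r$ bounded on the bulk''. None of this is available here: the only normalization used in Theorems \ref{thm:log-conc-sharp} and \ref{thm:radial-sharp} is $\int f=\int g=1$, which does not pin down $\|G\|_\infty$, a Lipschitz constant for $G$, nor a ball containing the bulk of $f$. The truncation scheme of \cite{Boroczky-Figalli-Ramos} that you cite is used for the \emph{non-sharp} Theorem \ref{thm:uniform-high-d} and does not produce the pointwise pinching you invoke; the sharp proof must avoid it. The paper does so by never appealing to a Lipschitz bound on the profile at all: it writes
\[
\int_{\{\mathcal S(r)>r\}}|f(r)-f\circ\mathcal S(r)|\,r^{n-1}\,\d r\;\le\;\int_0^\infty |f'(r)|\,|\mathcal T(r)^n-r^n|\,\d r\;\le\;\int_0^\infty f(r)\,|\mathcal T(r)^{n-1}\mathcal T'(r)-r^{n-1}|\,\d r,
\]
using Fubini for the first inequality and an integration by parts for the second — the only structural input being that $f$ is nonincreasing on $\R_+$ (so $f'$ has a sign) and $\mathcal T(0)=0$ (so the boundary term vanishes), exactly the hypotheses of Proposition \ref{prop:trace-log-conc}. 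This makes the Jacobian term and the translation term literally the same quantity, after which Cauchy--Schwarz against $f(r)r^{n-1}\,\d r$ closes the argument with no uniformity needed. To your credit, you identify precisely this as the crux and point to integration by parts as the mechanism that makes the radial case sharp, so the gap is one of execution rather than of direction; but as written, the Lipschitz-bound step would not go through.
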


\subsection*{Acknowledgements} We would like to thank K\'aroly B\"or\"oczky for valuable comments on the main results of this manuscript. A.F. is partially supported by the Lagrange Mathematics and Computation Research Center. 

\section{Preliminaries}\label{sec:prelim}

\subsection{Notation} Throughout this manuscript, we will write $\tau = \min(\lambda, 1-\lambda).$ We will generally use the following notation for level sets of functions $f,g,h$ below:
\begin{align}\label{eq:level-sets-def}
A_t &= \{ x \in \R^n \colon f(x) \ge t\}, \cr 
B_t &= \{ x \in \R^n \colon g(x) \ge t\}, \cr 
C_t &= \{ x \in \R^n \colon h(x) \ge t\}.
\end{align}
Given positive quantities $a$ and $b$, we will sometimes write $a \lesssim b,$ meaning that $a \le c \cdot b,$ where $c > 0$ is an absolute constant, depending only possibly on the dimension. We will also write $c(\tau)$ to denote an absolute computable function of only $\tau,$ that may change from line to line. 

\subsection{Transport maps and trace-like inequalities} In the proof of the main result of this manuscript, a crucial tool is the use of the transport map between radially decreasing functions. We first state the main properties we are going to need about that object. 

\begin{proposition}\label{prop:f-g-transp} Let $f,g$ be two non-negative, radially decreasing probability distributions. Let $T$ denote the transport map between $f$ and $g$, in the sense that
	\begin{equation}\label{eq:transport-f-g}
	\int_{-\infty}^{x} f(t) \, dt = \int_{-\infty}^{T(x)} g(t) \, dt, \qquad \forall \, x \in \R. 
	\end{equation}
Then the map $T$ is an increasing bijection of the real line, and hence differentiable almost everywhere. Furthermore, it satisfies the equation 
    \[
    f(x) = g(T(x))\cdot T'(x),\quad  \text{ for almost every } x \in \R. 
    \]
    Moreover, if we denote the inverse of $T$ by $S,$ we have that 
    \[
    g(y) = f(S(y))\cdot S'(y),\quad  \text{ for almost every } y \in \R. 
    \]
\end{proposition}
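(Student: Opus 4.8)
The plan is to treat this as the standard structure theory of the monotone transport map in one dimension, kept self-contained. First I would reduce to the case where $f$ and $g$ are strictly positive on all of $\R$: a radially decreasing probability density is strictly positive on an interval $(-R_f,R_f)$ (possibly with $R_f=+\infty$) and vanishes outside it, and the map $T$ defined by \eqref{eq:transport-f-g} is then the increasing bijection of $(-R_f,R_f)$ onto $(-R_g,R_g)$; since the statement only concerns almost-everywhere behaviour, it suffices to argue on these intervals, so one may as well assume $R_f=R_g=+\infty$. Set $F(x)=\int_{-\infty}^x f\,\d t$ and $G(y)=\int_{-\infty}^y g\,\d t$. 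As $f,g\in L^1(\R)$, these are continuous, and by strict positivity they are strictly increasing, mapping $\R$ onto $(0,1)$; hence $T=G^{-1}\circ F$ is well defined — this is precisely the identity $G\circ T=F$, i.e. \eqref{eq:transport-f-g} — and, being a composition of continuous strictly increasing bijections, $T$ is itself a continuous strictly increasing bijection of $\R$. In particular $T$ is monotone, so by Lebesgue's theorem it is differentiable almost everywhere.

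Next I would establish the Jacobian identity $f(x)=g(T(x))\,T'(x)$ a.e. by differentiating the relation $G\circ T=F$. By the Lebesgue differentiation theorem, $F'(x)=f(x)$ for a.e.\ $x$ and $G'(y)=g(y)$ for a.e.\ $y$; let $N\subset\R$ be the null set of points where $G$ fails to be differentiable with value $g$. The key point is that $T$ pushes $f\,\d x$ forward to $g\,\d x$: for $y\in\R$, evaluating \eqref{eq:transport-f-g} at $x=S(y):=T^{-1}(y)$ gives $\int_{-\infty}^{S(y)}f=\int_{-\infty}^{y}g$, so $T_\#(f\,\d x)$ and $g\,\d x$ have the same distribution function, whence $T_\#(f\,\d x)=g\,\d x$. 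Consequently $\int_{T^{-1}(N)} f\,\d x=\int_N g\,\d x=0$, and since $f>0$ a.e.\ this forces $|T^{-1}(N)|=0$. Therefore for a.e.\ $x$ one has simultaneously that $T'(x)$ exists, that $x$ is a Lebesgue point of $f$, and that $G$ is differentiable at $T(x)$ with $G'(T(x))=g(T(x))$; for such $x$ the one-variable chain rule (legitimate since $T$ is continuous, differentiable at $x$, and $G$ is differentiable at $T(x)$) gives $f(x)=F'(x)=(G\circ T)'(x)=g(T(x))\,T'(x)$.

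Finally, the identity for $S=T^{-1}$ follows by symmetry: from \eqref{eq:transport-f-g}, $S$ satisfies $\int_{-\infty}^{y}g=\int_{-\infty}^{S(y)}f$, so $S$ is exactly the transport map from $g$ to $f$ in the same sense, and the argument of the previous paragraph with the roles of $f,g$ (and of $F,G$) interchanged yields $g(y)=f(S(y))\,S'(y)$ for a.e.\ $y$.

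The one genuine difficulty is the regularity bookkeeping in the middle step: a general increasing $T$ need not satisfy the change-of-variables formula with equality, so one must locate the bad set where $G$ is not differentiable and show its $T$-preimage is Lebesgue-null, and this is precisely where strict positivity of $f$ — equivalently, mutual absolute continuity of $f\,\d x$ and Lebesgue measure — is used. An equivalent route is to first verify that $T$ has Luzin's property (N) (again because $|T(E)|=0$ whenever $\int_{T(E)}g=\mu_g(T(E))=\mu_f(E)=0$ and $g>0$ a.e.), hence that $T$ is locally absolutely continuous by Banach–Zarecki, and then apply the usual change-of-variables formula for monotone absolutely continuous maps; one could also simply invoke the standard one-dimensional optimal transport theory. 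All remaining points are routine.
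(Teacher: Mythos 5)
Your proof is correct. The paper itself does not actually prove this proposition — it simply observes that \eqref{eq:transport-f-g} is the defining relation of the (monotone) transport map between the measures $f\,\d x$ and $g\,\d y$ and then cites the one-dimensional optimal transport literature (Figalli--Glaudo) for the stated properties. You instead give a self-contained, first-principles argument: composing the strictly increasing CDFs, invoking Lebesgue's differentiation theorem for a.e.\ differentiability of the monotone map $T$, and — the genuinely non-trivial point you correctly isolate — showing that the null set $N$ where $G$ fails to be differentiable with value $g$ has $f\,\d x$-null preimage under $T$, so that the chain rule applies a.e. Your alternative route via Luzin's property (N) and Banach--Zarecki is also sound and is essentially the argument one would extract from the cited reference. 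The only place to be slightly careful is the opening reduction: if $R_f\neq R_g$ (say $R_f<\infty=R_g$) then $T$ is a bijection from $(-R_f,R_f)$ onto $\R$ rather than of $\R$ onto itself, so the proposition's ``bijection of the real line'' should really be read as a bijection between the open supports; you handle this by reducing to the full-support case, which is fine for the a.e.\ conclusions, and in fact the paper glosses over the same point. What your approach buys is transparency and independence from an external black box; what the paper's citation buys is brevity, since these facts are entirely standard.
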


\begin{proof} We simply note that \eqref{eq:transport-f-g} is equivalent to the fact that the map $T$ is a \emph{transport map} between the measures $f(x) \, \d x$ and $g(y) \, \d y.$ The properties of $T$ then follow, for instance, from the theory of optimal transport as in \cite{Figalli-Glaudo}, restricted to the one-dimensional case. 
\end{proof}

We now state a trace-like inequality result for radially decreasing functions on the real line, which is the main new bridge in order to prove the crucial step in the one-dimensional version of Theorem \ref{thm:uniform-high-d}, as well as in the sharp results Theorems \ref{thm:log-conc-sharp} and \ref{thm:radial-sharp}, which is Proposition \ref{prop:f-g-dist} below.  

\begin{proposition}\label{prop:trace-log-conc} Suppose that $f:\R \to \R_+$ is a $L^1$ function which is furthermore increasing on $(-\infty,0)$ and decreasing on $(0,+\infty).$ Moreover, let $\Phi$ be a locally Lipschitz function with $\Phi(0) = 0.$ Under those assumptions, the inequality
\begin{equation}\label{eq:trace} 
	\int_{\R} |\Phi(x)| \, |\d f(x)| \le  \int_{\R} |f(x)||\Phi'(x)| \, \d x
	\end{equation}
holds, where $|\d f|$ denotes the variation of the measure $\d f$ such that $f(x) = \int_{-\infty}^x \, \d f(s)$.  
\end{proposition}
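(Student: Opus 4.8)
The plan is to reduce the inequality \eqref{eq:trace} to a one-sided statement on each of the two half-lines $(-\infty,0)$ and $(0,+\infty)$, where $f$ is monotone, and then to integrate by parts. Consider first the interval $(0,+\infty)$, on which $f$ is decreasing; thus $\d f$ is a non-positive measure there, and $|\d f| = -\d f$. The key identity is the ``layer-cake'' rewriting: for $x>0$ we have $f(x) = \int_{f(x)}^{\infty} \mathbf{1}\{f(x)\ge t\}\,\d t$, but more useful is to note that, since $f$ is decreasing on $(0,\infty)$, the superlevel set $\{x>0\colon f(x)\ge t\}$ is an interval $(0,r(t))$ for a suitable non-increasing function $r(t)$, and $-\d f$ is exactly the push-forward structure that lets us write $\int_0^\infty G(x)\,(-\d f(x)) = \int_0^{f(0^+)} G(r(t))\,\d t$ for non-negative Borel $G$. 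Taking $G = |\Phi|$ and using $|\Phi(r(t))| = |\int_0^{r(t)} \Phi'(s)\,\d s| \le \int_0^{r(t)} |\Phi'(s)|\,\d s$ (here $\Phi(0)=0$ is used), Tonelli's theorem gives
\[
\int_0^\infty |\Phi(x)|\,(-\d f(x)) \le \int_0^{f(0^+)} \!\!\int_0^{r(t)} |\Phi'(s)|\,\d s\,\d t = \int_0^\infty |\Phi'(s)| \Big( \int_0^{f(0^+)} \mathbf{1}\{s < r(t)\}\,\d t\Big)\,\d s,
\]
and the inner integral is $|\{t\colon f(s)\ge t,\ t>0\}| = f(s)$ (for $s>0$, using that $f(s) = $ the measure of that set of levels). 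Hence $\int_0^\infty |\Phi(x)|\,|\d f(x)| \le \int_0^\infty f(s)|\Phi'(s)|\,\d s$.

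Next I would do the symmetric computation on $(-\infty,0)$: there $f$ is increasing, so $\d f$ is a non-negative measure, $|\d f| = \d f$, the superlevel set $\{x<0\colon f(x)\ge t\}$ is an interval $(\ell(t),0)$ with $\ell(t)$ non-decreasing, and $\int_{-\infty}^0 G(x)\,\d f(x) = \int_0^{f(0^-)} G(\ell(t))\,\d t$. Using $|\Phi(\ell(t))| \le \int_{\ell(t)}^0 |\Phi'(s)|\,\d s$ and Tonelli exactly as above yields $\int_{-\infty}^0 |\Phi(x)|\,|\d f(x)| \le \int_{-\infty}^0 f(s)|\Phi'(s)|\,\d s$. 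Adding the two half-line estimates gives \eqref{eq:trace}. One subtlety to handle carefully: the measure $\d f$ may have an atom at $0$ if $f$ jumps there (the hypotheses do not force continuity at the apex), but since $\Phi(0)=0$, that atom contributes $|\Phi(0)|\,|\d f(\{0\})| = 0$ to the left side, so it can simply be discarded; similarly any atoms of $\d f$ on the open half-lines are automatically accommodated by the push-forward description above, since monotonicity is all that is used, not continuity.

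The main obstacle I expect is the bookkeeping of the measure-theoretic change of variables when $f$ is merely monotone (not strictly monotone, not continuous): one must make precise the generalized inverse $r(t)$ (resp. $\ell(t)$) and verify the push-forward formula $\int G(x)\,|\d f(x)| = \int G(r(t))\,\d t$ in the presence of flat pieces and jumps of $f$. The clean way to do this is to observe that on $(0,\infty)$ the monotone function $-f$ has distributional derivative $-\d f \ge 0$, so $-f(x) = -f(0^+) + \int_0^x (-\d f)$, and then the identity $\int_0^\infty G(x)(-\d f(x)) = \int_0^\infty \big(\int_0^x G' \cdots\big)$-type integration by parts can be invoked directly, or one simply cites the standard fact (e.g. from the theory of Stieltjes integrals / Fubini for the ``region under a monotone graph'') that $\int_{\{(s,t): 0<s, 0<t\le f(s)\}} |\Phi'(s)|\,\d s\,\d t$ can be evaluated by integrating first in $s$ (giving the left-hand side via the level-set description) or first in $t$ (giving $\int f|\Phi'|$). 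I would phrase the final write-up in terms of this planar region $\{(s,t)\colon t>0,\ 0<t\le f(|s|) \text{ on each half-line}\}$ and a single application of Tonelli, which makes the continuity/atom issues evaporate. With that in hand, the proof is essentially two lines per half-line plus the elementary bound $|\Phi(x)| \le \int_0^{|x|}|\Phi'|$ coming from $\Phi(0)=0$ and local Lipschitzness.
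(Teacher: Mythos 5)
Your argument is correct, and it takes a genuinely different route from the paper's. The paper proceeds by Riemann--Stieltjes integration by parts: writing the left side as $\int_{-\infty}^0 |\Phi|\,\d f - \int_0^\infty |\Phi|\,\d f$, it integrates each piece by parts so that the boundary terms drop out (using $\Phi(0)=0$ at the apex and compact support at infinity), leaving $-\int_{-\infty}^0 f\,\tfrac{\d}{\d x}|\Phi|\,\d x$ and $-\int_0^\infty f\,\tfrac{\d}{\d x}|\Phi|\,\d x$; the non-compactly-supported case is then handled by approximation. You instead represent the half-line contribution $\int_0^\infty |\Phi|\,|\d f|$ as $\int_0^{f(0^+)}|\Phi(r(t))|\,\d t$ via the generalized-inverse push-forward, bound $|\Phi(r(t))|\le\int_0^{r(t)}|\Phi'|$ using $\Phi(0)=0$ and local Lipschitzness, and finish with Tonelli on the planar region $\{(s,t): 0<t\le f(s)\}$, where integrating first in $t$ gives $\int_0^\infty f|\Phi'|$. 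This ``unrolls'' the integration by parts to its layer-cake underpinnings: you avoid invoking the Stieltjes integration-by-parts formula and dispense with the compact-support truncation, at the price of needing the push-forward identity for merely monotone $f$ (possibly with flats and jumps). You correctly identify that subtlety and also correctly note that phrasing the whole computation as a single Tonelli integral over the region under the graph sidesteps it cleanly, and that the possible atom of $\d f$ at $0$ is killed by $\Phi(0)=0$; so the proof as outlined is sound.
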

	
\begin{proof}	We note that, since $f$ is increasing on $(-\infty,0)$ and decreasing on $(0,+\infty),$  we may write the left-hand side of \eqref{eq:trace} as 
	\[
	\int_{-\infty}^0  |\Phi(x)| \, \d f(x) - \int_0^{+\infty} |\Phi(x)| \, \d f(x). 
	\]
    Suppose first $f$ is compactly supported, so that both integrals exist and are finite. Then we may use a Riemann--Stieltjes integration by parts together with the fact that $\Phi(0) = 0$ and that the support of $f$ is compact: 
    \begin{align*}
        \int_{-\infty}^0 |\Phi(x)| \, \d f(x) &= - \int_{-\infty}^0 f(x)| \frac{\d}{\d x} |\Phi|(x) \, \d x, \cr  \int_0^{\infty} |\Phi(x)| \, \d f(x) &= - \int_0^{\infty} f(x) \frac{\d}{\d x} |\Phi|(x) \, \d x.
    \end{align*}
    We readily see that the difference between the right-hand sides above is bounded by
	\[
	\int_{-\infty}^{\infty}  |f(x)|\left|  \frac{\d}{\d x} |\Phi(x)| \right|\, \d x,
	\]
	which, on the other hand, is bounded by the asserted quantity. For the non-compactly supported case, we simply argue by approximation in a standard way. 
 \end{proof} 

\subsection{The Pr\'ekopa--Leindler inequality and rearrangements} Our next preliminary result concerns the interplay of rearrangements with the Pr\'ekopa--Leindler inequality. We define, for a function $\varphi:\R^n \to \R_+,$ its \emph{symmetric decreasing rearrangement} to be the unique radial function $\varphi^*:\R^n \to \R_+$ such that 
\[
|\{\varphi > s\}| = |\{ \varphi^* > s\}|,
\]
for all $s > 0$ for which the function $\mu(s) = |\{ \varphi > s\}|$ is continuous. A crucial property of the rearrangement is that it preserves functions satisfying \eqref{eq:PL-condition}: 

\begin{proposition}\label{lemma:rearrangements} Let $h,f,g:\R^n \to \R$ satisfy  \eqref{eq:PL-condition}. Then the same inequality follows for their rearrangements. That is, 
\[
h^*\left( \lambda x + (1-\lambda)y \right) \ge f^*(x)^{\lambda} g^*(y)^{1-\lambda}, \qquad \forall \, x,y \in \R^n.
\]
\end{proposition}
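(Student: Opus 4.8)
The plan is to reduce the assertion to a containment between super-level sets and then apply the Brunn--Minkowski inequality \eqref{eq:B-M}; this is the classical route, and the only points needing attention are routine measure-theoretic ones. Write $\mu_f(s)=|\{f>s\}|$, $\mu_g(t)=|\{g>t\}|$, $\mu_h(u)=|\{h>u\}|$; by construction of the symmetric decreasing rearrangement, each of $\{f^*>s\}$, $\{g^*>t\}$, $\{h^*>u\}$ is the centered ball of volume $\mu_f(s)$, $\mu_g(t)$, $\mu_h(u)$, i.e.\ of radius $\rho_f(s):=(\mu_f(s)/\omega_n)^{1/n}$, $\rho_g(t)$, $\rho_h(u)$ (defined analogously), where $\omega_n=|B(0,1)|$.

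The only step carrying content is the inequality
\[
\rho_h\!\big(s^{\lambda}t^{1-\lambda}\big)\ \ge\ \lambda\,\rho_f(s)+(1-\lambda)\,\rho_g(t),\qquad\forall\,s,t>0,
\]
equivalently $\mu_h(s^{\lambda}t^{1-\lambda})^{1/n}\ge\lambda\,\mu_f(s)^{1/n}+(1-\lambda)\,\mu_g(t)^{1/n}$. To prove it, fix $\delta>0$ and choose compact sets $K\subseteq\{f>s\}$ and $L\subseteq\{g>t\}$ with $|K|\ge\mu_f(s)-\delta$ and $|L|\ge\mu_g(t)-\delta$; passing to compact subsets sidesteps the possible non-measurability of a Minkowski sum of level sets. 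By \eqref{eq:PL-condition}, $x\in K$ and $y\in L$ force $h(\lambda x+(1-\lambda)y)\ge f(x)^{\lambda}g(y)^{1-\lambda}>s^{\lambda}t^{1-\lambda}$, so $\lambda K+(1-\lambda)L$ is a compact subset of $\{h>s^{\lambda}t^{1-\lambda}\}$; applying \eqref{eq:B-M} to $\lambda K$ and $(1-\lambda)L$ and letting $\delta\to0$ gives the claim.

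With this in hand the conclusion is immediate. Since $\lambda B(0,a)+(1-\lambda)B(0,b)=B(0,\lambda a+(1-\lambda)b)$ for concentric balls, the inequality above says exactly that
\[
\lambda\,\{f^*>s\}+(1-\lambda)\,\{g^*>t\}\ \subseteq\ \{h^*>s^{\lambda}t^{1-\lambda}\},\qquad\forall\,s,t>0.
\]
Now fix $x,y\in\R^n$. If $f^*(x)=0$ or $g^*(y)=0$ the desired inequality is trivial, so assume both are positive. For every $s<f^*(x)$ and $t<g^*(y)$ we have $x\in\{f^*>s\}$ and $y\in\{g^*>t\}$, hence $\lambda x+(1-\lambda)y\in\{h^*>s^{\lambda}t^{1-\lambda}\}$, that is $h^*(\lambda x+(1-\lambda)y)>s^{\lambda}t^{1-\lambda}$; letting $s\uparrow f^*(x)$ and $t\uparrow g^*(y)$ gives $h^*(\lambda x+(1-\lambda)y)\ge f^*(x)^{\lambda}g^*(y)^{1-\lambda}$, which is the asserted inequality.

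The proof is therefore nothing more than Brunn--Minkowski applied layer by layer, and I expect no genuine obstacle; the only care required is measure-theoretic bookkeeping. Concretely, one must deal with: the non-measurability of Minkowski sums (handled by the compact inner approximation above); the degenerate cases in which some distribution function takes the value $+\infty$ (harmless, since the limiting argument above still applies, and in any case removable by truncation); and, if one wishes to rely only on the convention-free characterization of $\varphi^*$ used in the definition of the rearrangement, the restriction of the levels $s,t$ to values for which $s$, $t$, and $s^{\lambda}t^{1-\lambda}$ are continuity points of $\mu_f$, $\mu_g$, $\mu_h$ respectively --- which one can always arrange, as these form co-countable families --- followed by passage to the limit.
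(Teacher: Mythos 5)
Your argument is correct and follows the same route as the paper: reduce to the super-level-set containment $\lambda A_s + (1-\lambda)B_t \subset C_{s^\lambda t^{1-\lambda}}$, apply Brunn--Minkowski to pass to the rearranged balls, and then take limits in $s,t$ to recover the pointwise inequality. The only cosmetic differences are that you phrase the conclusion via a containment of super-level sets of the rearrangements rather than via the supremum characterization of $\varphi^*$, and you add an (entirely reasonable) compact inner approximation to handle potential non-measurability of Minkowski sums.
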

\begin{proof} Let $H, F,G$ denote, respectively, the distribution functions of $h,f,g.$ That is, 
\begin{align*}
H(t) &= \mathcal{H}^n(C_t), \cr 
F(t) &= \mathcal{H}^n(A_t), \cr
G(t) &= \mathcal{H}^n(B_t).
\end{align*}
By \eqref{eq:PL-condition}, we have that 
\[
C_{s^{\lambda}t^{1-\lambda}} \supset \lambda A_s + (1-\lambda)B_t,
\]
for all $s,t>0.$ Hence, by Brunn--Minkowski, it follows that 
\begin{equation}\label{eq:ineq-mu}
H(s^{\lambda} t^{1-\lambda}) \ge \left(\lambda F(s)^{1/n} + (1-\lambda)G(t)^{1/n}\right)^{n} \qquad \forall \, s,t > 0.
\end{equation}
We then use that, for each $x \in \R^n,$ the rearrangement of a function $\varphi$ may be written as 
\[
\varphi^*(x) = \sup \{ t > 0 \colon \Phi(t) \ge \vol(B_{|x|}(0))\},
\]
where $\Phi$ is the distribution function of $\varphi.$ Take then $s_1$ for which $F(s_1) \ge \vol(B_{|x|}(0))$ and $t_1$ for which $G(t_1) \ge \vol(B_{|y|}(0))$ in \eqref{eq:ineq-mu}. Then it follows that 
\[
H(s_1^{\lambda} t_1^{1-\lambda}) \ge \vol(B_{\lambda |x|+(1-\lambda)|y|}(0)). 
\]
Since $s_1$ and $t_1$ can be made arbitrarily close to $f^*(|x|)$ and $ g^*(|y|),$ respectively, we conclude that 
$$f^*(|x|)^{\lambda} g^*(|y|)^{1-\lambda} \le h^*\left(\lambda |x| + (1-\lambda)|y|\right), \qquad \forall \, x,y \in \R^n.$$
Since $h^*$ is radially decreasing, this implies the claim.
\end{proof}

\subsection{Log-concave functions} Finally, we will need some properties of log-concave functions on the real line for Theorems \ref{thm:log-conc-sharp} and \ref{thm:radial-sharp}. We say that a function $\varphi:\R^n \to \R_+$ is \emph{log-concave} if, for any $x,y \in \R^n$ and $\lambda \in (0,1),$ we have 
\[
\varphi(\lambda x + (1-\lambda)y) \ge \varphi(x)^{\lambda} \varphi(y)^{1-\lambda}. 
\]
We will focus on properties of log-concave functions in dimension $n=1.$ For that case, if we have that $\int_{\R} \varphi(x) \, \d x = 1,$ then we say that $\varphi$ has \emph{median} $m_{\varphi}$ if
\[
\int_{-\infty}^{m_{\varphi}} \varphi(x) \, \d x = \int_{m_{\varphi}}^{\infty} \varphi(x) \, \d x = \frac{1}{2}.
\]
It turns out that the median plays a special role for pointwise estimates for log-concave probability distributions, as highlighted by the next result, which is a particular case of Proposition~2.2 in \cite{Boroczky-Ball-2}. 

\begin{proposition}\label{prop:first-log-conc} Let $\varphi:\R \to \R_+$ be a log-concave probability distribution with median $m$. We have then that 
\begin{equation}\label{eq:comparis-log-conc}
\varphi(m) e^{-2\varphi(m)|x-m|} \le \varphi(x) \le \varphi(m) e^{2\varphi(m)|x-m|},
 \end{equation}
  whenever $|x-m| \le \frac{\log(2)}{2\varphi(m)}$.
\end{proposition}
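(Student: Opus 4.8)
The plan is to reduce the two-sided bound to a single elementary convexity estimate for the function $\psi := -\log\varphi$, which is convex by hypothesis. Write $v(x) = \varphi(m)e^{-2\varphi(m)|x-m|}$ for the claimed lower envelope and $w(x) = \varphi(m)e^{2\varphi(m)|x-m|}$ for the upper one. Since $\varphi(m) = v(m) = w(m)$, it suffices to control the one-sided exponential rate of $\varphi$ near $m$ in terms of $\varphi(m)$; by the symmetry of the statement in $x \mapsto 2m - x$ (which does not change the class of log-concave probability densities once we also reflect, but here we genuinely need to treat both sides, since $\varphi$ need not be symmetric) I would argue on, say, $x > m$ and then note the argument for $x < m$ is identical with the roles of the two half-lines swapped.

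The key step is the following. Because $\psi$ is convex, for $x > m$ the slope $\dfrac{\psi(x) - \psi(m)}{x - m}$ is nondecreasing in $x$; call its limiting right-derivative at $m$ by $p^+ = \psi'_+(m)$ (finite or $+\infty$). On the one hand, convexity gives $\varphi(x) \le \varphi(m)e^{-p^+(x-m)}$ for all $x>m$, and since $\int_m^\infty \varphi = \tfrac12$ with $\varphi$ log-concave, one gets a lower bound $\varphi(m) \ge$ (something comparable to) the reciprocal of the "width at half mass"; concretely, $\int_m^\infty \varphi(x)\,\d x \le \int_m^\infty \varphi(m)e^{-p^+(x-m)}\,\d x = \varphi(m)/p^+$ forces $p^+ \le 2\varphi(m)$ whenever $p^+ > 0$ (and if $p^+ \le 0$ the inequality $p^+ \le 2\varphi(m)$ is trivial). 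This immediately yields the \emph{lower} bound on the whole interval: for $0 < x - m \le \tfrac{\log 2}{2\varphi(m)}$, convexity of $\psi$ and $\psi'_+(m) \le 2\varphi(m)$ cannot by themselves bound $\psi$ from above since the slope may grow; so for the lower bound on $\varphi$ (upper bound on $\psi$) I instead use the \emph{mass on the far side}: if $\varphi(x_0) < \varphi(m)e^{-2\varphi(m)(x_0-m)}$ at some point $x_0$ in the range, then by log-concavity $\varphi \le \varphi(x_0)$ on $[x_0,\infty)$ and $\varphi$ decays at rate at least $\psi'_+(x_0) \ge p^+$ there, while on $[m,x_0]$ convexity bounds $\varphi$ below by the chord; combining and integrating over $[m,\infty)$ gives total mass strictly less than $\tfrac12$, a contradiction once one checks the bookkeeping. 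Symmetrically one obtains the upper bound on $\varphi$ from the mass on $[m,\infty)$ being at least $\tfrac12$: if $\varphi(x_0) > \varphi(m)e^{2\varphi(m)(x_0-m)}$ then $\psi$ lies below the chord through $(m,\psi(m))$ and $(x_0,\psi(x_0))$ on $[m,x_0]$, forcing $\varphi$ to be large on all of $[m,x_0]$ and hence $\int_m^{x_0}\varphi > \tfrac12$ once $x_0 - m \le \tfrac{\log 2}{2\varphi(m)}$, again a contradiction.

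Concretely, the cleanest route is: normalize by an affine change of variables so that $m = 0$ and $\varphi(0) = 1$ (replacing $\varphi(x)$ by $\varphi(x/\varphi(0))/\varphi(0)$ preserves log-concavity, the median at $0$, total mass $1$, and the value at the median), so that the claim becomes $e^{-2|x|} \le \varphi(x) \le e^{2|x|}$ for $|x| \le \tfrac{\log 2}{2}$; then prove the two inequalities by the contradiction-via-mass argument just described, using only: (i) $\int_0^\infty \varphi = \tfrac12$, (ii) $t \mapsto -\log\varphi(t)$ convex, and (iii) $\int_{-\infty}^0\varphi = \tfrac12$. Finally, unwind the normalization.

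The main obstacle I anticipate is the bookkeeping in the contradiction arguments: one has to be careful that the chord/slope comparisons on $[m,x_0]$ versus $[x_0,\infty)$ are combined correctly and that the constant $2$ (rather than some larger constant) really comes out, which is exactly where the hypothesis $|x-m| \le \tfrac{\log 2}{2\varphi(m)}$ — equivalently $\varphi(m)e^{2\varphi(m)|x-m|} \le 2\varphi(m)$ — is used to keep the excess mass above the half-mass threshold. Since this is stated as a special case of Proposition~2.2 in \cite{Boroczky-Ball-2}, I would in fact simply invoke that reference, and only include the short normalization reduction above to make the specialization transparent.
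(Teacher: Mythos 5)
The overall philosophy you adopt — normalize, work with the convex function $\psi=-\log\varphi$, and derive contradictions by integrating against the half-mass condition — is in the right spirit, and your preliminary observation $\psi_+'(m)\le 2\varphi(m)$ (integrate the tangent-line bound on $[m,\infty)$) is a correct and useful reduction that the paper does not state explicitly. However, both contradiction arguments as sketched have genuine gaps, and the paper in fact proves the two halves by somewhat different means.

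For the upper bound, you integrate on $[m,x_0]$, the \emph{same} side as the violating point $x_0$. The chord bound does give $\varphi(x)>\varphi(m)e^{2\varphi(m)(x-m)}$ on $(m,x_0)$, but then
\[
\int_m^{x_0}\varphi \;>\; \frac{e^{2\varphi(m)(x_0-m)}-1}{2},
\]
which is $\le\frac12$ precisely when $x_0-m\le\frac{\log 2}{2\varphi(m)}$, so there is no contradiction for $x_0$ in the interior of the stated range. The correct move is to integrate on the \emph{opposite} side: from $\psi(x_0)<\psi(m)-2\varphi(m)(x_0-m)$ and convexity, the linear function through $(m,\psi(m))$ with slope less than $-2\varphi(m)$ lies \emph{below} $\psi$ for $x<m$, so $\varphi(x)<\varphi(m)e^{-2\varphi(m)(m-x)}$ there, and $\int_{-\infty}^m\varphi<\frac12$, a contradiction. (This also shows the upper bound holds with no restriction on $|x-m|$.) The paper proves the upper bound by yet another route: it compares $\varphi$ with the one-sided exponential $\psi_0(x)=\frac12 e^{-x}$ on $[-\log 2,\infty)$, which has the same median and same value at $m$, and uses log-concavity versus log-linearity together with equality of the integrals on $(m,\infty)$ to locate a single crossing $v>m$ and conclude $\varphi\le\psi_0$ outside $[m,v]$.

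For the lower bound your sketch does not close. You say ``on $[m,x_0]$ convexity bounds $\varphi$ below by the chord,'' but a lower bound on $\varphi$ is useless when you need to show $\int_m^\infty\varphi<\frac12$; you need an \emph{upper} bound on $\varphi$ on $[m,x_0]$, and convexity of $\psi$ only gives $\psi\ge$ its tangent lines, whose slope at $m$ could be nonpositive (the mode of $\varphi$ may lie to the right of $m$). The paper handles this with two additional ideas you are missing: (a) it suffices to prove the lower bound at the single endpoint $x_0=m+\frac{\log 2}{2\varphi(m)}$, since log-concavity then propagates the bound to the whole interval by interpolating between $m$ and $x_0$; and (b) for that endpoint case, one controls $\int_m^{t_0}\varphi$ using the \emph{already-proved upper bound} $\varphi\le\varphi(m)e^{2\varphi(m)(x-m)}$ and controls $\int_{t_0}^{\infty}\varphi$ by the tangent-line bound with slope $a\ge$ chord slope $>2\varphi(m)$, where $t_0$ is chosen precisely so that the two bounds match at $t_0$; only then does the total come out strictly below $\frac12$. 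Deferring this to ``bookkeeping'' understates the issue — (a) and (b) are the content of the lemma. Since you would in practice cite \cite{Boroczky-Ball-2}, the proposal is defensible as a pointer, but it should not be presented as a proof.
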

\begin{proof} By scaling and translating, we may suppose, without loss of generality, that $m=0$ and $\varphi(0) = \frac{1}{2}.$ We then let $\psi:\R \to \R_+$ be the log-concave distribution defined by
$$\psi(x) = \left\{\begin{array}{ll}
\frac{1}{2} e^{-x} &\text{whenever $x \ge - \log(2),$}\\
0&\text{otherwise.}
\end{array}
\right.
$$
Then $\psi(0) = \varphi(0)$ and $m_{\psi} = 0$ as well. Hence, since $\int_{0}^{\infty} \varphi(x) \, \d x = \int_{0}^{\infty} \psi(x) \, \d x,$ there must exist $v > 0$ for which $\varphi(x) \le \psi(x)$ for all $x \ge v.$ Take the minimal $v.$ It then follows by log-concavity of $\varphi$ that $\varphi(x) \ge \psi(x)$ for $x \in [0,v],$ and $\varphi(x) \le \psi(x)$ otherwise. This implies in particular the claimed upper bound in \eqref{eq:comparis-log-conc}. 

In order to prove the lower bound, we suppose without loss of generality that $x>0$ and note that it is enough to prove that 
$\varphi(\log(2)) \ge \frac{1}{4},$
since the desired assertion for points in the interval $[0,\log(2)]$ follows by log-concavity directly. Suppose then, for the sake of a contradiction, that it does not hold - that is, $\varphi(\log(2)) < \frac14$. Then we should have, by log-concavity of $\varphi$ and the fact that $\varphi(0) = \frac12,$ that 
\[
\varphi(x) \le \frac{1}{4} e^{-a(x-\log(2))},
\]
for some $a \ge 1.$ We fix then $t_0 = \frac{a-1}{a+1} \log(2),$ and note that, since $\frac{1}{4} e^{-a(t_0-\log(2))} = \frac{1}{2} e^{-t_0},$ then again by log-concavity we get $\varphi(x) \le \frac12 e^{x}$ for $x \in [0,t_0].$ We then estimate 
\[
\int_0^{\infty} \varphi(x) \, \d x < \frac12 \int_0^{t_0} e^{x} \, \d x + \frac14 \int_{t_0}^{\infty} e^{-a(t-\log(2))} \, \d t < \frac12,
\]
as $a \ge 1.$ This is a contradiction to the fact that $0$ is the meadian of $\varphi,$ which implies the claim.
\end{proof}

The next result shows how to obtain a generalization of the result in the previous Proposition where one does not compare the log-concave distribution to the median point. It is a combination of results from Corollary~2.3 and Proposition~2.2 in \cite{Boroczky-Ball-2}. 

\begin{proposition}\label{prop:log-conc}
Let $\varphi:\R \to \R_+$ be a log-concave probability density, and let $\int_{x}^{\infty} \varphi=\nu \in\left(0, \frac{1}{2}\right]$. Then we have the following: 

\begin{enumerate}
    \item[(i)] $\varphi(x) \cdot e^{-\frac{\varphi(x)|t-x|}{\nu}} \leq \varphi(t) \leq \varphi(x) \cdot e^{\frac{\varphi(x)|t-x|}{\nu}} \quad$ if $|t-x| \leq \frac{\nu \log(2)}{\varphi(x)}$;
    \item[(ii)] $\varphi(w) \le \varphi (x),$ for all $w > x.$ 
\end{enumerate}
\end{proposition}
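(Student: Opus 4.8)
The plan is to reduce both statements to the median comparison already established in Proposition \ref{prop:first-log-conc}, together with elementary monotonicity arguments for log-concave densities. Since $\varphi$ is a log-concave probability density on $\R$, it is unimodal: there is a (possibly unbounded or degenerate) interval on which $\varphi$ attains its maximum, it is nondecreasing to the left of that interval and nonincreasing to the right. I would first record this structural fact, which follows directly from the definition of log-concavity (if $\varphi(a),\varphi(b)>0$ with $a<b$ then $\varphi$ is positive and genuinely log-concave — hence in particular quasi-concave — on $[a,b]$).

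For part (ii), let $m=m_\varphi$ be the median. The hypothesis $\int_x^\infty\varphi=\nu\le\frac12$ forces $x\ge m$, because $t\mapsto\int_t^\infty\varphi$ is nonincreasing and equals $\frac12$ at $t=m$. On $[m,\infty)$ the density is already past its mode (the mode lies at or to the left of the median for a log-concave density — this is again a consequence of unimodality, since $\int_{-\infty}^m\varphi=\int_m^\infty\varphi$ and a strictly increasing piece to the right of $m$ would unbalance the two halves). Hence $\varphi$ is nonincreasing on $[x,\infty)\subset[m,\infty)$, which is exactly $\varphi(w)\le\varphi(x)$ for $w>x$.

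For part (i), the idea is to replace $\varphi$ by a shifted log-concave density for which $x$ plays the role of the median, apply Proposition \ref{prop:first-log-conc}, and then undo the normalization. Concretely, since $\nu=\int_x^\infty\varphi\le\frac12$, I would consider the log-concave probability density $\psi$ obtained by gluing: let $\psi$ agree with a suitably rescaled copy of $\varphi$ on $[x,\infty)$ so that $\int_x^\infty\psi=\frac12$ and $\psi(x)=\varphi(x)$, and extend $\psi$ to the left of $x$ in a log-concave, mass-$\frac12$ fashion matching the value $\varphi(x)$ at $x$ (for instance by reflecting the right tail's logarithmic slope, or by taking the extremal exponential profile as in the proof of Proposition \ref{prop:first-log-conc}). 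Then $x$ is the median of $\psi$, $\psi(x)=\varphi(x)$, and on $[x,\infty)$ one has $\psi(t)=\varphi(t)$ up to the fixed rescaling by $2\nu$ in the density direction. Applying \eqref{eq:comparis-log-conc} to $\psi$ at its median $x$ gives, for $|t-x|\le\frac{\log 2}{2\psi(x)}=\frac{\log 2}{2\varphi(x)}$,
\[
\psi(x)e^{-2\psi(x)|t-x|}\le\psi(t)\le\psi(x)e^{2\psi(x)|t-x|}.
\]
Tracking the rescaling constant — the relation between $\psi$ and $\varphi$ on the relevant side is $\varphi=2\nu\,\psi$ composed with an affine reparametrization whose derivative is $\tfrac{1}{2\nu}$, so the effective exponent improves by a factor $2\nu$ — converts the factor $e^{\pm 2\varphi(x)|t-x|}$ into $e^{\pm\varphi(x)|t-x|/\nu}$ and the validity range $\frac{\log 2}{2\varphi(x)}$ into $\frac{\nu\log 2}{\varphi(x)}$, which is precisely the claimed estimate (for $t\ge x$; the case $t<x$ follows from the left extension together with part (ii) and log-concavity, since one only needs a local bound near $x$).

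The main obstacle I anticipate is bookkeeping the rescaling constants cleanly: one must be careful that the ``gluing'' density $\psi$ is genuinely log-concave across the junction at $x$ and that the change of variables relating $\psi$ and $\varphi$ is tracked consistently in both the value ($2\nu$) and the argument ($\tfrac1{2\nu}$) directions, since it is the cancellation of these two factors that upgrades the exponent from $2\varphi(x)$ to $\varphi(x)/\nu$. An alternative, cleaner route that avoids the gluing entirely is to integrate the one-sided log-concavity inequality directly: log-concavity of $\varphi$ on $[x,\infty)$ gives $\varphi(t)\le\varphi(x)e^{c(t-x)}$ whenever this is decreasing, i.e.\ $c<0$; writing $-c$ for the (nonnegative) logarithmic decay rate just past $x$, the constraint $\int_x^\infty\varphi\le\nu$ forces $\varphi(x)/(-c)\le$ (something comparable to) $\nu$ via $\int_x^\infty\varphi(x)e^{c(t-x)}\,dt=\varphi(x)/(-c)$ only if the profile were exactly exponential — so one compares against the extremal exponential profile with the same value $\varphi(x)$ at $x$ and the same mass $\nu$ on $[x,\infty)$, namely $t\mapsto\varphi(x)e^{-\varphi(x)(t-x)/\nu}$, and invokes the same minimality-of-$v$ argument as in Proposition \ref{prop:first-log-conc} to sandwich $\varphi$ between this exponential and its reciprocal on the stated range. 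I would present whichever of these two arguments turns out shorter; both ultimately rest on the same extremal-exponential comparison underlying Proposition \ref{prop:first-log-conc}.
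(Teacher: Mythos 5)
Your proposal for part (i) is broadly the same idea as the paper's (reduce to the median case via a modified density and apply Proposition~\ref{prop:first-log-conc}), but the normalization you describe is internally inconsistent: you ask simultaneously that $\psi(x)=\varphi(x)$ \emph{and} that $\varphi=2\nu\,\psi$ up to an affine reparametrization with derivative $\tfrac{1}{2\nu}$, which cannot both hold at the junction $t=x$ unless $\nu=\tfrac12$. The paper's construction is cleaner and avoids this: keep $\varphi$ \emph{unchanged} on $[x,\infty)$, replace it on $(-\infty,x]$ by $\min\{\varphi(t),\varphi(x)e^{\lambda(t-x)}\}$ with $\lambda$ chosen so the left mass is also $\nu$, and then apply Proposition~\ref{prop:first-log-conc} to $\tfrac{1}{2\nu}\tilde\varphi$, whose median is $x$ and whose value there is $\tfrac{\varphi(x)}{2\nu}$. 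Note that this only yields the two-sided bound for $t\geq x$ and the lower bound for $t<x$ (since $\tilde\varphi\le\varphi$ on the left); the upper bound for $t<x$ requires a separate step, namely applying the lower bound at $s=2x-t>x$ and then invoking log-concavity at the midpoint $x=\tfrac{t+s}{2}$. Your parenthetical ``the case $t<x$ follows from \dots part (ii) and log-concavity'' glosses over exactly this reflection step, and as written it circularly relies on your proof of (ii), which is the real problem.

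The genuine gap is in part (ii). Your argument rests on the claim that for a log-concave density the mode lies at or to the left of the median, so that $\varphi$ is nonincreasing on $[m,\infty)$. This is false. Take $\varphi(t)=2e^{2t}\,\mathbf 1_{\{t\le 0\}}$: the mode is at $0$, the median is $m=-\tfrac{\log 2}{2}<0$, and $\varphi$ is strictly increasing on $[m,0]$. In fact, with $x\in[m,0)$ and $w\in(x,0)$ one gets $\nu=\int_x^\infty\varphi\le\tfrac12$ while $\varphi(w)>\varphi(x)$, which shows that part (ii) \emph{as printed} is not correct either; the paper's own proof actually establishes the weaker (and correct) bound $\varphi(w)\le 2\varphi(x)$, which is also the form used later in Lemma~\ref{lemma:bounds}. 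The heuristic ``a strictly increasing piece to the right of $m$ would unbalance the two halves'' does not hold: the above example has such a piece and is a perfectly balanced probability density. The paper's argument for (ii) is of a different nature: normalize $x=0$, $\varphi(0)=\tfrac12$; dispose of $0<w\le 2\nu\log 2$ via part (i); for larger $w$, assume $\varphi(w)>1$, use log-concavity to bound $\varphi$ from below by an exponential on $(0,w)$, integrate, and derive a contradiction with $\int_0^\infty\varphi=\nu$. You should rework part (ii) along these lines rather than via a mode-vs-median monotonicity claim, and you should flag that the conclusion is $\varphi(w)\le 2\varphi(x)$, not $\varphi(w)\le\varphi(x)$.
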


\begin{proof} 

\vspace{1mm}

We begin by proving (i). Let $|t-x| \leq \frac{\nu \log(2)}{\varphi(x)}$. There exists a unique $\lambda \in \mathbb{R}$ such that, for the function
$$
\tilde{\varphi}(t)=\left\{\begin{array}{ll}
\varphi(t) & \text { if } t \geq x \\
\min \left\{\varphi(t), \varphi(x) \cdot e^{\lambda(t-x)}\right\} & \text { if } t \leq x 
\end{array},\right.
$$
we have $\int_{-\infty}^{x} \tilde{\varphi}=\nu$. We note that $\tilde{\varphi}$ is log-concave and $\lambda \geq \frac{-\varphi(x)}{\nu}$. In particular $\frac{1}{2 \nu} \tilde{\varphi}$ is a log-concave probability distribution whose median is $x$, and hence Proposition \ref{prop:first-log-conc} yields $\varphi(t) \geq \tilde{\varphi}(t) \geq \varphi(x) \cdot e^{\frac{-\varphi(x)|t-x|}{\nu}}$. Since for $s=2 x-t$ we have $\varphi(s) \geq \varphi(x) \cdot e^{\frac{-\varphi(x)|s-x|}{\nu}}$, we conclude (i) by log-concavity. 

For the proof of (ii), we may translate and dilate (preserving $L^1$ norms) in order to assume without generality that $x=0$ and $\varphi(0) = 1/2.$ From this, the assertion in part (i) above implies that we may suppose $w > 2 \nu \log(2).$ 

Suppose then that $\varphi(w) > 1.$ Then, log-concavity of $\varphi$ directly implies that $\varphi(t) \ge \frac{1}{2} e^{(t/w) \log(2\varphi(w))},$ for each $t \in (0,w).$ We have then 
\[
\nu \ge \int_0^w \varphi(t) \, dt \ge \frac{w(\varphi(w) - 1)}{2 \log(2 \varphi(w))} \ge \nu \log(2) \frac{2\varphi(w) - 1}{ \log(2\varphi(w))}.
\]
It is then a simple computation to verify that the function $s \mapsto \frac{s-1}{\log(s)}$ is larger than $\frac{1}{\log(2)}$ for $s > 2.$ Hence, the right-hand side above is bounded from below \emph{strictly} by $\nu$ under our hypothesis, a contradiction stemming from $\varphi(w) > 1.$ Hence $\varphi(w) \le 1,$ and the second assertion is proved. 
\end{proof}

We finally note the following simple result for log-concave functions, which will play a crucial role in extending the results from the $\lambda = 1/2$ case to the case of \emph{general} $\lambda \in (0,1)$:

\begin{proposition}[Lemma~7.4 in \cite{Boroczky-De}]\label{prop:interpolation-log-concave} For fixed $\lambda \in(0,1)$, if $\eta \in(0,2 \cdot \min \{1-\lambda, \lambda\})$ and $\varphi$ is a log-concave function on $[0,1]$ satisfying $\varphi(\lambda) \leq(1+\eta) \varphi(0)^{1-\lambda} \varphi(1)^\lambda$, then
$$
\varphi\left(\frac{1}{2}\right) \leq\left(1+\frac{\eta}{\min \{1-\lambda, \lambda\}}\right) \sqrt{\varphi(0) \varphi(1)}
$$
\end{proposition}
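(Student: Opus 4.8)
The plan is to reduce the general statement to a one-parameter comparison by using the defining inequality of log-concavity on suitably chosen triples of points in $[0,1]$, and then to optimize. Write $p = \varphi(0)$, $q = \varphi(1)$, and set $\tau = \min\{\lambda, 1-\lambda\}$. Without loss of generality we may assume $\lambda \le 1/2$, so $\tau = \lambda$ (the case $\lambda > 1/2$ follows by the symmetry $\varphi(x) \mapsto \varphi(1-x)$, $\lambda \mapsto 1-\lambda$). The hypothesis reads $\varphi(\lambda) \le (1+\eta)\, p^{1-\lambda} q^{\lambda}$, and we want $\varphi(1/2) \le \bigl(1 + \tfrac{\eta}{\lambda}\bigr)\sqrt{pq}$.

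First I would normalize: since both the hypothesis and the conclusion are invariant under replacing $\varphi$ by $c\,\varphi$ for $c>0$, and (after the reduction $\lambda \le 1/2$) we can also rescale the variable — actually the domain is fixed as $[0,1]$, so instead I would just normalize $p = q$ by replacing $\varphi(x)$ with $\varphi(x)\cdot (p/q)^{x - 1/2}$, say; this is log-affine, hence preserves log-concavity, preserves the value structure at the three relevant points up to the explicit factors, and reduces to showing: if $\varphi(\lambda) \le (1+\eta)\varphi(0)$ and $\varphi(0) = \varphi(1)$, then $\varphi(1/2) \le (1+\eta/\lambda)\varphi(0)$. (I would double-check the bookkeeping of the log-affine twist carefully, since that is where an error would most easily slip in.) Now normalize further so that $\varphi(0) = \varphi(1) = 1$.

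The key step is then the following. Since $1/2 \in [\lambda, 1]$ (because $\lambda \le 1/2$), write $1/2 = (1-\mu)\lambda + \mu\cdot 1$ with $\mu = \frac{1/2 - \lambda}{1-\lambda} \in [0, 1)$, so that $1 - \mu = \frac{1/2}{1-\lambda}$. Log-concavity at the points $\lambda, 1$ gives
$$\varphi\!\left(\tfrac12\right) \ge \varphi(\lambda)^{1-\mu}\,\varphi(1)^{\mu} = \varphi(\lambda)^{1/(2(1-\lambda))}.$$
This is the wrong direction. Instead I would use log-concavity to bound $\varphi(\lambda)$ from \emph{below} in terms of $\varphi(0)$ and $\varphi(1/2)$: since $\lambda = (1 - \nu)\cdot 0 + \nu \cdot \tfrac12$ with $\nu = 2\lambda \in [0,1]$, log-concavity yields $\varphi(\lambda) \ge \varphi(0)^{1-2\lambda}\varphi(1/2)^{2\lambda}$, i.e. $\varphi(\lambda) \ge \varphi(1/2)^{2\lambda}$ after normalization. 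Combining with the hypothesis $\varphi(\lambda) \le 1+\eta$ gives $\varphi(1/2)^{2\lambda} \le 1+\eta$, hence $\varphi(1/2) \le (1+\eta)^{1/(2\lambda)}$. It remains to check the elementary inequality $(1+\eta)^{1/(2\lambda)} \le 1 + \frac{\eta}{\lambda}$ under the hypothesis $\eta < 2\lambda$ (recall $\tau = \lambda$ here). This is where the constraint $\eta \in (0, 2\tau)$ is used: writing $(1+\eta)^{1/(2\lambda)} = \exp\!\bigl(\tfrac{1}{2\lambda}\log(1+\eta)\bigr)$ and using $\log(1+\eta) \le \eta$ gives $(1+\eta)^{1/(2\lambda)} \le e^{\eta/(2\lambda)}$; since $\eta/(2\lambda) < 1$, the bound $e^{x} \le 1 + 2x$ for $x \in [0,1)$ — or more precisely a slightly sharper convexity estimate — yields $(1+\eta)^{1/(2\lambda)} \le 1 + \frac{\eta}{\lambda}$.

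I expect the main obstacle to be purely organizational rather than conceptual: getting the log-affine normalization right so that the general $\varphi(0), \varphi(1)$ case genuinely reduces to the symmetric one without leaking constants, and pinning down the precise elementary estimate $(1+\eta)^{1/(2\tau)} \le 1 + \eta/\tau$ for $\eta \in (0, 2\tau)$ (one wants a clean self-contained proof, e.g. via the convexity of $t \mapsto (1+\eta)^t$ comparing the chord on $[0, 1/(2\tau)]$ against its value at the endpoints, or via $\log$). Both are routine, so the proposition should follow quickly once the reduction to $\varphi(0) = \varphi(1)$ and the single application of log-concavity at $\lambda = 2\lambda \cdot \tfrac12 + (1-2\lambda)\cdot 0$ are in place.
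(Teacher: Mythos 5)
Your argument is correct and uses the same key step as the paper: writing $\lambda = (1-2\lambda)\cdot 0 + 2\lambda\cdot\tfrac12$, applying log-concavity to get $\varphi(\lambda)\ge\varphi(0)^{1-2\lambda}\varphi(1/2)^{2\lambda}$, combining with the hypothesis, and then bounding $(1+\eta)^{1/(2\lambda)}\le e^{\eta/(2\lambda)}\le 1+\eta/\lambda$ using $\eta/(2\lambda)<1$. The only difference is your preliminary log-affine normalization to $\varphi(0)=\varphi(1)$, which is a harmless but unnecessary extra step — the paper just keeps $\varphi(0)^{1-\lambda}\varphi(1)^{\lambda}$ on the right and divides through at the end, avoiding the bookkeeping you flagged as the most error-prone part.
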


\begin{proof} We may assume that $0<\lambda<\frac{1}{2}$. Then, since $\lambda=(1-2 \lambda) \cdot 0+2 \lambda \cdot \frac{1}{2}$ and $\varphi(\lambda) \leq(1+\eta) \varphi(0)^{1-\lambda} \varphi(1)^\lambda$, the log-concavity of $\varphi$ yield
$$
(1+\eta) \varphi(0)^{1-\lambda} \varphi(1)^\lambda \geq \varphi(\lambda) \geq \varphi(0)^{1-2 \lambda} \varphi\left(\frac{1}{2}\right)^{2 \lambda}.
$$
Thus $(1+\eta)^{\frac{1}{2 \lambda}} \leq e^{\frac{\eta}{2 \lambda}} \leq 1+\frac{\eta}{\lambda}$ implies
$$
\varphi\left(\frac{1}{2}\right) \leq(1+\eta)^{\frac{1}{2 \lambda}} \sqrt{\varphi(0) \varphi(1)} \leq\left(1+\frac{\eta}{\lambda}\right) \sqrt{\varphi(0) \varphi(1)}.
$$
\end{proof} 
 
\section{Proof of Theorem \ref{thm:uniform-high-d}}

We will divide our discussion of the proof of Theorem \ref{thm:uniform-high-d} into two parts: the one-dimensional and the higher-dimensional parts. For each part, the proof will be split into several steps. 

\vspace{3mm}

\subsection{Part I: One-dimensional analysis} In this part, we show a new alternative approach to the stability of the Pr\'ekopa--Leindler inequality in dimension $n=1.$ This new approach has the major advantage of removing the exponent dependency on $\lambda$ in that case. In fact, in the $n=1$ case, the result of Theorem \ref{thm:uniform-high-d} can be deduced from \cite{van-Hintum-Spink-Tiba-2} directly, since it uses only the two-dimensional sharp Brunn--Minkowski inequality as a black box. 

\vspace{2mm}

\noindent\textbf{Step 1. Bounding the level sets of near-extremals.} We first discuss some more immediate properties of functions  $h,f,g:\R\to\R_+$  satisfying  \eqref{eq:PL-condition} and \eqref{eq:PL-almost-eq} for $n=1.$

First of all, up to multiplying $f,g,h$ by constants that preserve the relations between them, and up to a rescaling in the variable $x$, one can always assume that 
 \begin{equation}
 \label{eq:normalize fg}\|f\|_\infty =1\qquad\text{and}\qquad \|f\|_{1} = \|g\|_{1} = 1.\end{equation}
Then, it follows by \cite[Lemma~2.4]{Boroczky-Figalli-Ramos} that 
\begin{equation}\label{eq:L^1-norm-bound} \|g\|_{\infty} \in (1-c(\tau) \varepsilon^{1/2},1+c(\tau) \varepsilon^{1/2})
\end{equation} 
whenever $\varepsilon \lesssim \tau^3.$

Given the above reductions, we recall the following result, which first appeared in \cite[Lemma~2.5]{Boroczky-Figalli-Ramos}. We refer the reader to that manuscript for a proof of this result. 

\begin{lemma}
\label{thm:cutting-support} 
Let $h,f,g$ satisfy \eqref{eq:PL-condition}, \eqref{eq:PL-almost-eq}, and \eqref{eq:normalize fg}, and assume that $\varepsilon \lesssim \tau^3$. If $\varepsilon^{1/2} \leq \eta<1$, then  
\begin{equation}
\label{cutting-supporteq1}
\mathcal{H}^1(\{f\geq \eta\})\lesssim  \tau^{-\frac52} \cdot|\log \varepsilon|^{\frac4\tau} ,\qquad 
\mathcal{H}^1(\{g \geq \eta\}) \lesssim  \tau^{-\frac52} \cdot|\log \varepsilon|^{\frac4\tau},
\end{equation}
and 
\[
\int_{\{f < \eta\}} f
\lesssim \tau^{-\frac52} \cdot\eta\,|\log \varepsilon|^{\frac4\tau},\qquad \int_{\{g <\eta\}} g \lesssim \tau^{-\frac52} \cdot\eta\,|\log \varepsilon|^{\frac4\tau}.
\]
\end{lemma}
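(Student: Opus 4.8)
The plan is to control the distribution functions $F(t)=\mathcal{H}^1(A_t)$, $G(t)=\mathcal{H}^1(B_t)$ and $H(t)=\mathcal{H}^1(C_t)$: the level-set measures in \eqref{cutting-supporteq1} are exactly $F(\eta)$ and $G(\eta)$, and the truncated integrals are $\int_{\{f<\eta\}}f=\int_0^\eta F$ and $\int_{\{g<\eta\}}g=\int_0^\eta G$. The structural input, already isolated in the proof of Proposition~\ref{lemma:rearrangements}, is that \eqref{eq:PL-condition} forces $C_{s^\lambda t^{1-\lambda}}\supset\lambda A_s+(1-\lambda)B_t$, so that the one-dimensional Brunn--Minkowski inequality yields
\[
H(s^\lambda t^{1-\lambda})\ \ge\ \lambda F(s)+(1-\lambda)G(t),\qquad\forall\,s,t>0.
\]

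The first step is a crude polynomial estimate using only \eqref{eq:PL-condition}, \eqref{eq:normalize fg} and \eqref{eq:L^1-norm-bound} (the assumption $\varepsilon\lesssim\tau^3$ being precisely what makes \eqref{eq:L^1-norm-bound}, hence $\|g\|_\infty\ge\tfrac12$, available). Since $\|f\|_\infty=1$ and $\|g\|_\infty\ge\tfrac12$, there are points $x_f,x_g$ at which $f,g$ are comparable to their suprema; inserting $y=x_g$, resp. $x=x_f$, into \eqref{eq:PL-condition} gives $\lambda A_\eta+(1-\lambda)x_g\subset C_{c\eta^\lambda}$ and $\lambda x_f+(1-\lambda)B_\eta\subset C_{c\eta^{1-\lambda}}$, hence $H(c\eta^\lambda)\ge\lambda F(\eta)$ and $H(c\eta^{1-\lambda})\ge(1-\lambda)G(\eta)$. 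Integrating these against the relevant Jacobian and using $\int h\le 1+\varepsilon$ produces $F(\eta)\lesssim\tau^{-1}\eta^{-\lambda}$ and, symmetrically, a bound for $G$, and one further integration in $\eta$ gives analogous polynomial bounds for $\int_{\{f<\eta\}}f$ and $\int_{\{g<\eta\}}g$. These are universal but degenerate polynomially as $\eta\downarrow\varepsilon^{1/2}$, and the task is to replace the polynomial loss by a polylogarithmic one.

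The upgrade must exploit the near-equality \eqref{eq:PL-almost-eq}. The clean way to package it, in the spirit of the rest of this manuscript, is through the optimal-transport proof of Pr\'ekopa--Leindler: normalising $f,g$ to probability densities and taking the monotone transport map $T$ of Proposition~\ref{prop:f-g-transp}, the identity $f=g(T)\,T'$ turns \eqref{eq:PL-almost-eq} into
\[
\int_{\R}f(x)\,\Theta\bigl(T'(x)\bigr)\,\d x\ \le\ \varepsilon,\qquad\Theta(r):=\lambda r^{-(1-\lambda)}+(1-\lambda)r^{\lambda}-1,
\]
where $\Theta\ge0$ vanishes only at $r=1$ and satisfies $\Theta(r)\gtrsim\tau\min\{(\log r)^2,|\log r|\}$. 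Hence $T'$ is within any prescribed factor of $1$ off a set of $f$-measure $\lesssim\varepsilon/\tau$, and on that set $f=g(T)T'$, together with the analogous identity for $S=T^{-1}$, transfers a level set of $f$ into a comparable level set of $g$ and back. Bootstrapping the crude estimate through this transference --- equivalently, through the four-point consequence of \eqref{eq:PL-condition} used in \cite{Boroczky-Figalli-Ramos} combined with the two-dimensional Brunn--Minkowski stability, which forces the $C_\eta$, and then the $A_\eta,B_\eta$, to be close to intervals of controlled length --- and iterating it dyadically in the level together with the $\sim\tau^{-1}$-step passage from the $\lambda$- to the $\tfrac12$-version of the inequality (Proposition~\ref{prop:interpolation-log-concave}), one removes the polynomial degeneration for $\eta\ge\varepsilon^{1/2}$ and reaches $\mathcal{H}^1(\{f\ge\eta\}),\mathcal{H}^1(\{g\ge\eta\})\lesssim\tau^{-5/2}|\log\varepsilon|^{4/\tau}$.

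The tail bounds then follow by writing $\int_{\{f<\eta\}}f=\int_0^\eta F(s)\,\d s$, splitting the integral at $s=\varepsilon^{1/2}$, using the level-set bound on $[\varepsilon^{1/2},\eta]$, and controlling the genuinely deep part $\{f<\varepsilon^{1/2}\}$ by a separate transport argument in which near-equality forces exponential-type decay of the profile of $f$ (and similarly for $g$). The step I expect to be the real obstacle is closing this bootstrap quantitatively: the defect inequality pins down $T'$ only in $f$-measure, so each transference loses a multiplicative constant, and one must calibrate the number of iterations, the dyadic thresholds, and the $\lambda\to\tfrac12$ reduction so that the accumulated loss is exactly the (admittedly lossy) $\tau^{-5/2}|\log\varepsilon|^{4/\tau}$ rather than something that diverges as $\varepsilon\to0$. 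This balancing is carried out in \cite[Lemma~2.5]{Boroczky-Figalli-Ramos}, to which we refer for the details.
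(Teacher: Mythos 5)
The paper itself supplies no proof of this lemma: it is recalled verbatim from \cite[Lemma~2.5]{Boroczky-Figalli-Ramos} with the words ``we refer the reader to that manuscript for a proof of this result,'' and your proposal ends exactly the same way, so at the level at which the paper operates the two approaches coincide. The preliminary sketch you prepend is broadly plausible --- the inclusion $C_{s^\lambda t^{1-\lambda}}\supset\lambda A_s+(1-\lambda)B_t$, the one-dimensional Brunn--Minkowski inequality on it, and the crude bound $F(\eta)\lesssim\tau^{-1}\eta^{-\lambda}$ are all correct and easy consequences of the hypotheses --- but the ``upgrade'' paragraph is impressionistic rather than a proof, and two of the tools you lean on do not apply as written: Proposition~\ref{prop:interpolation-log-concave} requires $\varphi$ to be log-concave whereas $f,g,h$ here are merely measurable, and Proposition~\ref{prop:f-g-transp} is stated for radially decreasing densities, so one would first have to pass to rearrangements and then undo that. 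You effectively concede the gap yourself (``the step I expect to be the real obstacle is closing this bootstrap quantitatively''). Since you ultimately cite \cite{Boroczky-Figalli-Ramos} just as the paper does, this is not a mistake, but the sketch should not be mistaken for a standalone proof of the polylogarithmic bound.
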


\vspace{2mm}

\noindent\textbf{Step 2. Reduction to the radial case.} Let $h,f,g$ satisfy \eqref{eq:PL-condition}, \eqref{eq:PL-almost-eq}, and \eqref{eq:normalize fg}. Given $\theta>0$ a small constant to be fixed later, define the truncated log-hypographs of $f,g,h$ as 
\begin{equation*}\begin{split}
\mathcal{S}_f = \{ (x,T) \in \R^{2} \colon x \in \{ f > \eps^{\theta}\}, \, \eps^{\theta} \le e^T < f(x)\},\cr
\mathcal{S}_g = \{ (x,T) \in \R^{2} \colon x \in \{ g > \eps^{\theta}\}, \, \eps^{\theta} \le e^T < g(x)\},\cr
\mathcal{S}_h = \{ (x,T) \in \R^{2} \colon x \in \{ h > \eps^{\theta}\}, \, \eps^{\theta} \le e^T < h(x)\}. 
\end{split}\end{equation*}
We will start by controlling the measure of $\mathcal{S}_f$ and $\mathcal{S}_g.$ 

We have directly by the definitions that 
\begin{equation}\label{eq:bound-level-f}
\frac{1}{2} \le \mathcal{H}^2(\mathcal{S}_f) \le \theta |\log \varepsilon| \mathcal{H}^1(A_{\varepsilon^{\theta}}) \lesssim c(\tau) |\log \varepsilon|^{\frac{4}{\tau} + 1}. 
\end{equation}
The same argument can be used in order to prove entirely analogous bounds for $\mathcal{H}^2(\mathcal{S}_g).$ Now, the condition \eqref{eq:PL-condition} implies that 
\[
\lambda \mathcal{S}_f + (1-\lambda) \mathcal{S}_g \subset \mathcal{S}_h.
\]
The goal is then to show that the triple $(\mathcal{S}_h,\mathcal{S}_f,\mathcal{S}_g)$ satisfies near-equality in the Brunn--Minkowski inequality, with deficit given by an absolute power of $\varepsilon.$ We shall use the following numerical lemma: 

\begin{lemma}\label{lemma:numerical} Let $x,y,z > 0$ be such that $x \ge \left( \lambda y^{1/2} + (1-\lambda) z^{1/2}\right)^2.$ Then 
\[
0 \le x - \left( \lambda y^{1/2} + (1-\lambda) z^{1/2}\right)^2 \le \left| x - \left( \lambda y + (1-\lambda) z\right) \right| + \tau |y^{1/2}-z^{1/2}|^2.
\]
\end{lemma}

\begin{proof} It suffices to verify that 
\[
\left( \lambda y^{1/2} + (1-\lambda) z^{1/2}\right)^2 \ge \lambda y + (1-\lambda)z - \tau|z^{1/2}-y^{1/2}|^2.
\]
This assertion is equivalent to 
\[
\tau |z^{1/2} - y^{1/2}|^2 \ge \lambda(1-\lambda)|z^{1/2} - y^{1/2}|^2,
\]
which is true by the definition of $\tau$. 
\end{proof}

Using Lemma \ref{lemma:numerical} with $x = \mathcal{H}^2(\mathcal{S}_h), y = \mathcal{H}^2(\mathcal{S}_f), z = \mathcal{H}^2(\mathcal{S}_g),$ we readily obtain that 
\begin{align}\label{eq:-B-M-attempt}
0 & \le \mathcal{H}^2(\mathcal{S}_h) - \left(\lambda \mathcal{H}^2(\mathcal{S}_f)^{1/2} + (1-\lambda) \mathcal{H}^2(\mathcal{S}_g)^{1/2} \right)^2 \cr 
 & \le \int_{\varepsilon^{\theta}}^2 \left| \mathcal{H}^1(C_t) - \lambda \mathcal{H}^1(A_t) - (1-\lambda) \mathcal{H}^1(B_t) \right| \, 
 \frac{\d t}{t} + \tau \left| \mathcal{H}^2(\mathcal{S}_f)^{1/2} - \mathcal{H}^2(\mathcal{S}_g)^{1/2}\right|^2 \cr 
 & \le \varepsilon^{-\theta} \int_{0}^2 \left| \mathcal{H}^1(C_t) - \lambda \mathcal{H}^1(A_t) - (1-\lambda) \mathcal{H}^1(B_t) \right| \, 
 \d t \cr 
 & + \frac{\tau}{\varepsilon^{2\theta} \max\{\mathcal{H}^2(\mathcal{S}_f), \mathcal{H}^2(\mathcal{S}_g)\}} \left(\int_0^2 |\mathcal{H}^1(A_t) - \mathcal{H}^1(B_t)| \, \d t\right)^2.
\end{align}
Now, by invoking \cite[Lemma~3.2]{Boroczky-Figalli-Ramos}, we have that 
\[
\int_0^2 \left| \mathcal{H}^1(C_t) - \lambda \mathcal{H}^1(A_t) - (1-\lambda) \mathcal{H}^1(B_t) \right| \, 
 \d t \lesssim \tau^{-\frac{3}{2}} \varepsilon^{1/2}. 
\]
Now, assume that  we can bound 
\begin{equation}\label{eq:bound-distribution-f-g}
\int_0^2 |\mathcal{H}^1(A_t) - \mathcal{H}^1(B_t)| \, \d t \lesssim \varepsilon^{\theta_0},
\end{equation}
for some absolute constant $\theta_0 > 0.$ 
Then, choosing $\theta >0$  to be a small enough absolute constant and recalling \eqref{eq:bound-level-f} (and its analogue for $g$), we conclude that
\begin{equation}
\label{eq:S fgh}
     \mathcal{H}^2(\mathcal{S}_h) \le (1+c(\tau) \varepsilon^{\alpha_0}) \left(\lambda \mathcal{H}^2(\mathcal{S}_f)^{1/2} + (1-\lambda) \mathcal{H}^2(\mathcal{S}_g)^{1/2} \right)^2
\end{equation}
for some $\alpha_0>0$ universal.

Notice now that the left-hand side of \eqref{eq:bound-distribution-f-g} equals 
\[
\int_{\R} |f^*(x) - g^*(x)| \, \d x,
\]
where $\varphi^*$ denotes the radially decreasing rearrangement of a given function $\varphi$, as defined in Section \ref{sec:prelim}. Also, using Proposition \ref{lemma:rearrangements}, it follows that $h^*,f^*,g^*$ also satisfy \eqref{eq:PL-condition} and \eqref{eq:PL-almost-eq}.

In other words, we have reduced the validity of \eqref{eq:S fgh} to proving a uniform closeness relationship between functions $f,g$ which are \emph{even} and \emph{radially decreasing}, in addition to satisfying \eqref{eq:PL-condition}, \eqref{eq:PL-almost-eq}, and \eqref{eq:normalize fg}

\vspace{2mm}

\noindent\textbf{Step 3. Uniform bound for the even case.} Next, we show the following result, which confirms the validity of a uniform estimate in the even case: 

\begin{proposition}\label{prop:f-g-dist} Suppose $h,f,g:\R \to \R_+$ satisfy \eqref{eq:PL-condition} and \eqref{eq:PL-almost-eq}. Suppose, moreover, that $f$ and $g$ are both even and radially decreasing, $\int_{\R} f = \int_{\R} g = 1$, and $\max(\|f\|_{\infty},\|g\|_{\infty}) \le 2$. Then, for any $\gamma > 0, \tau \in (0,1)$, we may find $c_{\gamma}(\tau) > 0$ such that 
\begin{equation}\label{eq:closeness-f-g}
\int_{\R} |f(x) - g(x)| \, \d x \lesssim c_{\gamma} (\tau) \varepsilon^{1/2 - \gamma}. 
\end{equation}
\end{proposition}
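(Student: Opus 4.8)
The plan is to work with the one–dimensional optimal transport map $T$ between $f$ and $g$ furnished by Proposition \ref{prop:f-g-transp}, and to reduce \eqref{eq:closeness-f-g} to a quantitative bound for $\int_{\R} f(x)\,|T'(x)-1|\,\d x$. The first ingredient is the optimal–transport proof of the Pr\'ekopa--Leindler inequality: inserting the map $x\mapsto \lambda x+(1-\lambda)T(x)$ into \eqref{eq:PL-condition}, using $f(x)=g(T(x))T'(x)$ and the change of variables formula, the near–equality \eqref{eq:PL-almost-eq} (with $\int f=\int g=1$) upgrades to
\[
\int_{\R} f(x)\,\bigl(\Psi(T'(x))-1\bigr)\,\d x\le \varepsilon,\qquad \Psi(s):=s^{-(1-\lambda)}\bigl(\lambda+(1-\lambda)s\bigr)\ge 1,
\]
and symmetrically $\int_{\R} g\,(\widetilde\Psi(S')-1)\le\varepsilon$ for the inverse map $S=T^{-1}$, with $\widetilde\Psi(s)=s^{-\lambda}\bigl((1-\lambda)+\lambda s\bigr)$. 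The elementary identity used in the proof of Lemma \ref{lemma:numerical} (applied with one variable equal to $1$) gives $\Psi(s)-1\ge\frac\tau2 w(s)^2$ and $\widetilde\Psi(s)-1\ge\frac\tau2 w(s)^2$ with $w(s):=1-\min(\sqrt s,1/\sqrt s)$, so that
\[
\int_{\R} f\,w(T')^2\le \frac{2\varepsilon}{\tau},\qquad \int_{\R} g\,w(S')^2\le\frac{2\varepsilon}{\tau}.
\]

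The second ingredient, carried out first at a formal level, is the estimate $\int_{\R}|f-g|\lesssim \int_{\R} f\,|T'-1|$. For $x\ge0$ one writes
\[
f(x)-g(x)=g(T(x))\bigl(T'(x)-1\bigr)+\bigl(g(T(x))-g(x)\bigr);
\]
the substitution $y=T(x)$ turns $\int_0^\infty g(T(x))|T'(x)-1|\,\d x$ into $\int_0^\infty g(y)|S'(y)-1|\,\d y$, while $|g(T(x))-g(x)|\le\int_{[x\wedge T(x),\,x\vee T(x)]}|\d g|$, and after exchanging the order of integration and noting that $\{x\ge0:\ y\in[x\wedge T(x),x\vee T(x)]\}$ is contained in the interval between $y$ and $S(y)$, one obtains $\int_0^\infty|g(T(x))-g(x)|\,\d x\le\int_0^\infty|S(y)-y|\,|\d g(y)|$, which Proposition \ref{prop:trace-log-conc} (with $\Phi=S-\id$) bounds by $\int_0^\infty g\,|S'-1|$. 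Using evenness this yields $\int_{\R}|f-g|\lesssim \int_{\R} f\,|T'-1|$, and by symmetry also $\int_{\R}|f-g|\lesssim\int_{\R} g\,|S'-1|$.

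To turn this into a proof one cuts at the level $\eta:=\varepsilon^{1/2-\gamma/2}$ and invokes Lemma \ref{thm:cutting-support}: on the set where $f\ge\eta$ and $g\circ T\ge\eta$ one has $T'=f/(g\circ T)\in[\tfrac\eta2,\tfrac1\eta]$ (recall $\|f\|_\infty=1$, $\|g\|_\infty\le2$), and its complement carries $f$– and $g$–mass $\lesssim \tau^{-5/2}\varepsilon^{1/2-\gamma/2}|\log\varepsilon|^{4/\tau}\lesssim c_\gamma(\tau)\varepsilon^{1/2-\gamma}$, so all the manipulations above can be localized to the good set, the discarded pieces being absorbed into this error; depending on which of $\{f\ge\eta\},\{g\ge\eta\}$ is the larger interval one uses the $T$– or the $S$–decomposition, and the trace inequality is replaced by its one–interval version (an integration by parts on $[0,b]$ using $\Phi(0)=0$, together with $\Phi\,g\ge0$ at $b$). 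On the good set $T'$ is two–sidedly bounded, and I would estimate $\int f|T'-1|$ by splitting into $T'\in[\tfrac12,2]$, $T'>2$ and $T'<\tfrac12$: on the first range $|T'-1|\lesssim w(T')$, so Cauchy--Schwarz against $\int f\,w(T')^2\le 2\varepsilon/\tau$ gives $\lesssim(\varepsilon/\tau)^{1/2}$; on $\{T'>2\}$ one uses $|T'-1|\le T'\le1/\eta$ and $\int_{\{T'>2\}}f\le\varepsilon/(\Psi(2)-1)\lesssim\varepsilon/\tau$, giving $\lesssim\varepsilon^{1/2+\gamma/2}/\tau$; on $\{T'<\tfrac12\}$ one uses $|T'-1|\le1$ and $\int_{\{T'<1/2\}}f\lesssim\varepsilon/\tau$. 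All three contributions are $\lesssim c_\gamma(\tau)\varepsilon^{1/2-\gamma}$, whence \eqref{eq:closeness-f-g}.

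The step I expect to be the main obstacle is the localization in the last paragraph. Since $f$ and $g$ may have rather different effective supports at level $\eta$, there is in general no single interval carrying almost all the mass of both functions on which $T'$ (or $S'$) is two–sidedly bounded; one has to partition $[0,\infty)$ into a part on which $T$ is bi-Lipschitz and a thin transitional part on which one of $f,g$ has $\eta$–small mass, and then verify that the localized change of variables and the localized trace inequality generate errors of size $\lesssim c_\gamma(\tau)\varepsilon^{1/2-\gamma}$ — while, crucially, the Cauchy--Schwarz step retains the original deficit $\varepsilon$ rather than the (much larger, of order $\eta$) deficit that a genuinely truncated problem would carry.
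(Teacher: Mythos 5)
Your proposal follows essentially the same route as the paper's proof: reduce to controlling $\int f\,|T'-1|$ via the optimal–transport proof of Pr\'ekopa--Leindler, combine the resulting deficit estimate $\int f\,(\Psi(T')-1)\le\varepsilon$ with Cauchy--Schwarz on the range where $T'$ is close to $1$, use the trace inequality (Proposition \ref{prop:trace-log-conc}) to pass from $|g\circ T - g|$ to $g|S'-1|$, and absorb the low–level tails using Lemma \ref{thm:cutting-support}. The differences are cosmetic: you cut at $\eta=\varepsilon^{1/2-\gamma/2}$ and split by the ranges $T'\in[1/2,2]$, $T'>2$, $T'<1/2$, whereas the paper cuts at $\varepsilon^{1/2}$ and works with the set $S_0=\{T'\notin(1/10,10)\}$, but the estimates track the same quantities. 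The one point you flag as the main obstacle — carrying out the localized change of variables and trace inequality while controlling the error from the bad set — is indeed where the real work lies, and the paper resolves it by a careful choice: instead of $\Phi=S-\id$ (which need not be locally Lipschitz where $T'$ degenerates) it applies Proposition \ref{prop:trace-log-conc} to $\Phi(s)=\int\chi_{\{s>x>S(s)\}}\chi_{R_0}(x)\,\d x$, which is Lipschitz by construction on the good set, and then bounds the correction terms $\int g\,|\chi_{R_0}-\chi_{T(R_0)}|$ using Lemma \ref{thm:cutting-support} together with the restricted deficit estimate. So your plan is correct in spirit and in all the essential ingredients; completing it requires precisely the kind of careful bookkeeping of the symmetric differences $R_0\triangle T(R_0)$ that the paper performs.
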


\begin{proof} Let $I_1 = \{ f > \varepsilon^{1/2}\}$ and $I_2 = \{ g > \varepsilon^{1/2}\}$. Since $f$ and $g$ are both even and decreasing, either $I_1 \supset I_2$ or $I_2 \supset I_1$. Suppose without loss of generality that $I_1 \supset I_2$ so that $\{f \le \varepsilon^{1/2}\} \subset \{ g \le \varepsilon^{1/2}\}$. 

Consider then transport map between $f$ and $g,$ as in Proposition \ref{prop:f-g-transp}. Since $f$ and $g$ are origin-symmetric, we must have $T(0) = 0$. We then split the integral on the left-hand side of \eqref{eq:closeness-f-g} into two parts: 
\[
\int_{\R} |f(x) - g(x)| \, \d x = \int_{\{f > \varepsilon^{1/2}\}} |f(x) - g(x)| \, \d x + \int_{\{ f \le \varepsilon^{1/2}\}} |f(x) - g(x)| \, \d x =: I_1 + I_2. 
\]
For $I_2$, we bound it by 
\begin{align*}
I_2 \le \int_{\{f \le \varepsilon^{1/2}\}} f + \int_{\{f \le \varepsilon^{1/2}\}} g 
  \le \int_{\{f \le \varepsilon^{1/2}\}} f + \int_{\{g \le \varepsilon^{1/2}\}} g,
 \end{align*}
 by the reductions made. By Lemma \ref{thm:cutting-support}, both integrals above are bounded by $c_{\gamma}(\tau) \varepsilon^{1/2 - \gamma}$, and hence we focus on $I_1.$ There, we divide into two further parts, using the transport map between $f$ and $g$. Indeed, let
 $$S_0 = \{ x \in \R \colon \text{ either } T'(x) > 10 \text{ or } T'(x) < 1/10\}.$$
 We start with the following proof of the Pr\'ekopa--Leindler inequality: suppose $\int_{\R} f(x) \, \d x = \int_{\R} g(x) \, \d x = 1.$ Then 
\begin{align*}
1 & = \int_{\R} f(x) \, \d x = \int_{\R} f(x)^{\lambda} g(T(x))^{1-\lambda} (T'(x))^{1-\lambda} \, \d x \cr 
  & \le \int_{\R} h\left( \lambda x+(1-\lambda)T(x)\right) (T'(x))^{1-\lambda} \, \d x \cr 
  & \le \int_{\R} h\left( \lambda x+(1-\lambda)T(x)\right) \left(\lambda + (1-\lambda) T'(x)\right) \, \d x = \int_{\R} h(x) \, \d x.
\end{align*}
If, however, we know that $\int h \le 1+\varepsilon,$ then a thorough analysis of the proof above yields 
\begin{align}\label{eq:deficit-estimate}
\varepsilon & \ge \int_{\R} h(\lambda x + (1-\lambda)T(x)) \left( (\lambda + (1-\lambda)T'(x)) - (T'(x))^{1-\lambda}\right) \, \d x \cr 
            & \ge \int_{\R} f(x) \left( \frac{\lambda + (1-\lambda)T'(x)}{(T'(x))^{1-\lambda}} - 1 \right) \, \d x \ge \tau \int_{\R} f(x) \frac{(1-\sqrt{T'(x)})^2}{(T'(x))^{1-\lambda}}\, \d x. 
\end{align}
Consider then the splitting 
\[
I_1 = \int_{\{f > \varepsilon^{1/2}\} \cap S_0} |f-g| + \int_{\{f > \varepsilon^{1/2}\} \cap S_0^c} |f-g| =: J_1 + J_2. 
\]
We bound $J_1$ by using the optimal transport approach for Pr\'ekopa--Leindler above: indeed, 
\[
J_1 \le \left(\|f\|_{\infty} + \|g\|_{\infty}\right) |\{f > \varepsilon^{1/2}\} \cap S_0|.
\]
On the other hand, by \eqref{eq:deficit-estimate}, we have 
\begin{align} 
\label{eq:f S0}
\varepsilon  \ge \tau \int_{\{f > \varepsilon^{1/2}\} \cap S_0} f(x) \frac{(1-\sqrt{T'(x)})^2}{(T'(x))^{1-\lambda}}\, \d x 
     \ge c(\tau)^{-1} \cdot \varepsilon^{1/2} |\{ f > \varepsilon^{1/2}\} \cap S_0|, 
\end{align} 
which implies that 
\[
J_1 \le c(\tau) \varepsilon^{1/2}. 
\]
Thus, we can focus on $J_2$. There, we need to use the transport map approach. Write, for shortness, $R_0 = \{f > \varepsilon^{1/2}\} \cap S_0^c$. Then we have 
\begin{align*}
\int_{R_0} |f(x) - g(x)| \, \d x & = \int_{R_0} |g(T(x))T'(x) - g(x)| \, \d x \cr 
    & \le \int_{R_0} |g(T(x))||T'(x) - 1| \, \d x + \int_{R_0} |g(x) - g(T(x))| \, \d x \cr 
    & \le \int_{T(R_0)} |g(y)| |S'(y) - 1| \, \d y + 
    \int_{R_0} |g(x) - g(T(x))| \, \d x \cr 
    & =: K_1 + K_2. 
\end{align*}
Let us first look at what $T(R_0)$ looks like. Since $R_0 \subset S_0^c$, we have $T(R_0) \subset T(S_0^c) = \{T(x) \colon 1/10 < T'(x) < 10\} = \{ y \in \R \colon 1/10 < S'(y) < 10\}.$ We then notice that the estimate \eqref{eq:deficit-estimate} can be done in the exact same fashion with $f$ and $T$ replaced by $g$ and $S$, where one obtains instead that 
\begin{equation}\label{eq:deficit-estimate-2}
\varepsilon \ge \tau \int_{\R} g(y) \frac{(1-\sqrt{S'(y)})^2}{(S'(y))^{\lambda}} \, \d y.  
\end{equation}
Let us use this in order to estimate $K_1$: we have, by Cauchy-Schwarz and the inclusions above, 
\begin{align*}
\int_{T(R_0)} g(y) |S'(y) - 1| \, \d y &\le \left(\int_{\{1/10 < S' < 10\}} g(y) |S'(y)-1|^2 \, \d y \right)^{1/2} \cr 
    & \le c(\tau) \left( \int_{\R} g(y) \frac{(1-\sqrt{S'(y)})^2}{(S'(y))^{\lambda}} \, \d y\right)^{1/2},
 \end{align*}
and, since the right-hand side above is bounded by $c(\tau) \varepsilon^{1/2}$, this bounds $K_1$.

It remains to bound $K_2$. We shall use the fundamental theorem of calculus in order to do that: indeed, first we analyse it inside the set for which $T(x) > x.$ There, we have 
\begin{align*}
\int_{R_0 \cap \{T(x) > x\}} |g(x) - g(T(x))| \, \d x & \le \int_{\R} \left( \int_{\R} \chi_{\{T(x) > s > x\}} \cdot \chi_{R_0}(x) \, \d x \right) \, |\d g(s)| \cr 
    & = \int_{\R} \left( \int_{S(s)}^s \chi_{R_0}(x) \, \d x \right) \, |\d g(s)|.
 \end{align*}
Let then $\Phi(s) = \int_{\R} \chi_{\{ s > x > S(s)\}} \cdot \chi_{R_0} \, \d x.$ This is a locally Lipschitz function satisfying $\Phi(0) = 0.$ Hence, we can apply Proposition \ref{prop:trace-log-conc}. Inequality \eqref{eq:trace} implies that the right-hand side above is bounded by 
 \begin{align*}
 &\int_{\R}  |g(s)| | \chi_{R_0}(s) - S'(s) \chi_{R_0}(S(s))| \, \d s \cr 
 &\le \int_{\R}  \chi_{R_0}(S(s)) |g(s)| \, |S'(s) - 1| \, \d s 
 + \int_{\R} |g(s)| |\chi_{R_0}(s) - \chi_{R_0}(S(s))| \, \d s \cr 
 &\le  \int_{\{1/10 < S' < 10\}} |g(s)| \, |S'(s) - 1| \, \d s + \int_{\R} |g(s)| |\chi_{R_0} - \chi_{T(R_0)}| \, \d s. 
 \end{align*}
 Since the first term in the last line is already bounded by $c(\tau) \varepsilon^{1/2},$ we focus on the second one. This second term is clearly bounded by 
 \[
 \int_{R_0 \triangle T(R_0)} g(s) \, \d s \le \int_{R_0 \setminus T(R_0)} g(s) \, \d s + \int_{T(R_0) \setminus R_0} g(s) \, \d s. 
 \]
    Note that $T(R_0) \setminus R_0 \subset R_0^c \subset \{f < \varepsilon^{1/2}\} \cup S_0$, which we write as the disjoint union $\{ f < \varepsilon^{1/2}\} \sqcup \left(S_0 \cap \{ f > \varepsilon^{1/2}\}\right)$. Hence, 
\begin{align*} 
\int_{T(R_0) \setminus R_0} g(s) \, \d s & \le \int_{\{f \le \varepsilon^{1/2}\}} g(s) \, \d s + \int_{S_0 \cap \{f > \varepsilon^{1/2}\}} g(s) \, \d s \cr 
    & \le \int_{\{ g \le \varepsilon^{1/2}\}} g(s) \, \d s + \|g\|_{\infty} | S_0 \cap \{f > \varepsilon^{1/2}\}| \lesssim c_{\gamma} (\tau) \varepsilon^{1/2 - \gamma},
\end{align*} 
where the last inequality follows from Lemma \ref{thm:cutting-support} and \eqref{eq:f S0}.
Similarly, we have 
\begin{align*} 
R_0 \setminus T(R_0) & \subset T(R_0^c) \subset \left( T(\{f < \varepsilon^{1/2}\}) \cup T(S_0)\right)  \cr 
& = \left( T(\{ f < \varepsilon^{1/2}\}) \sqcup (T(S_0) \cap T(\{f \ge \varepsilon^{1/2}\})) \right) 
\end{align*}
Note that $T(S_0) = \{ y=  T(x) \colon \text{ either } 1/10 > T'(x) \text{ or } T'(x) > 10\} = \{ y \in \R \colon \text{ either } S'(y) < 1/10 \text{ or } S'(y) > 10\}.$ From restricting the integral on the right-hand side of \eqref{eq:deficit-estimate-2} to $T(S_0)$, we get that 
\[
c(\tau) \varepsilon \ge \int_{T(S_0)} g(s) \, \d s. 
\]
Thus, using again Lemma \ref{thm:cutting-support}, we have
\begin{align*}
\int_{R_0 \setminus T(R_0)} g(s) \, \d s & \le \int_{T(\{f < \varepsilon^{1/2}\})} g(s) \, \d s + \int_{T(S_0)} g(s) \, \d s \cr 
    & \le \int_{\{f < \varepsilon^{1/2}\}} f(s) \, \d s + c(\tau) \varepsilon \le c_{\gamma} (\tau) \varepsilon^{1/2 - \gamma}. 
\end{align*}
 By repeating the exact same analysis as above in the set $\{T(x) < x\}$, this allows us to conclude the desired estimate on the distance between $f$ and $g,$ concluding our proof.
\end{proof}

\vspace{2mm}

\noindent\textbf{Step 4. Conclusion.} We are now ready to prove the $n=1$ case of Theorem \ref{thm:uniform-high-d}.  

Indeed, applying Proposition \ref{prop:f-g-dist}
to $f^*,g^*,h^*$, we get
\[
\int_{\R} |f^*(x) - g^*(x)| \, \d x \le c_{\gamma} (\tau) \varepsilon^{1/2 - \gamma}.
\]
Then, combining this bound with \eqref{eq:-B-M-attempt}, for $\theta$ sufficientyl small we obtain 
\[
\mathcal{H}^2(\mathcal{S}_h) - \left(\lambda \mathcal{H}^2(\mathcal{S}_f)^{1/2} + (1-\lambda) \mathcal{H}^2(\mathcal{S}_g)^{1/2} \right)^2 \le c_{\gamma} (\tau) \big(\varepsilon^{\frac{1}{2} - \theta} +\varepsilon^{1 - 2(\gamma+\theta)} \big)=:\delta.
\]
Since $\mathcal{H}^2(\mathcal{S}_f), \mathcal{H}^2(\mathcal{S}_g) > \frac{1}{2},$ we conclude that $(\mathcal{S}_h,\mathcal{S}_f,\mathcal{S}_g)$ form a triple of near-extremals for the Brunn--Minkowski inequality in dimension 2. We then conclude, from either \cite{van-Hintum-Spink-Tiba-2} or \cite{Figalli-van-Hintum-Tiba}, that, denoting the closure of the convex hull of $\mathcal{S}_f,\mathcal{S}_g, \mathcal{S}_h$ by $S_f,S_g,S_h,$ respectively, there are $\tilde{w} = (w,\varrho) \in \R^{2},$ and a convex set $\mathbb{S}_h \supset S_h$ with 
\begin{equation}\begin{split}\label{eq:brunn-minkowski-optimal-2}
\mathbb{S}_h &\supset (\mathcal{S}_f - \tilde{w}) \cup (\mathcal{S}_g + \tilde{w}),\cr
\mathcal{H}^{2}(S_h \setminus \mathcal{S}_h) + \mathcal{H}^{2}(S_f \setminus \mathcal{S}_f) & + \mathcal{H}^{2}(S_g \setminus \mathcal{S}_g) \le c(\tau)  |\log\eps|^{\frac{4}{\tau}}\delta ,\cr
\mathcal{H}^{2}(\mathbb{S}_h \setminus \mathcal{S}_h) + \mathcal{H}^{2}(\mathbb{S}_h \setminus (\mathcal{S}_f - \tilde{w})) & + \mathcal{H}^{2}(\mathbb{S}_h \setminus (\mathcal{S}_g + \tilde{w})) \le  c(\tau)  |\log\eps|^{\frac{4}{\tau}}\delta^{1/2}. 
\end{split}\end{equation}
We now employ the analysis of \cite[Lemma~6.1]{Boroczky-De}. Suppose first $\tilde{w} = (w,\varrho), \, \varrho > 0.$ We let 
$$S_f^{\varrho} = \{(x,T) \in S_f \colon \theta \log \eps \le T \le \theta \log \eps + \varrho\}.$$ 
By the fact that $\mathcal{H}^{2}(S_f + (0,\varrho))  = \mathcal{H}^{2}(S_f) = \mathcal{H}^{2}(S_f \cap (S_f + (0,\varrho))) + \mathcal{H}^{2}(S_f^{\varrho})$, it follows that $\mathcal{H}^{2}(S_f \Delta (S_f + (0,\varrho))) = 2 \mathcal{H}^{2}(S_f^{\varrho}).$ Since  
$S_f^{\rho} \subset S_f \setminus (S_h+\tilde{w}),$ we also have
$$\mathcal{H}^{2}(S_f^{\varrho}) \le c(\tau) |\log\eps|^{\frac{4}{\tau}} \delta^{1/2}.$$
Thus, by triangle inequality, 
\begin{equation*}\begin{split}
\mathcal{H}^{2}(S_f \Delta (S_h + (w,0))) \le 2\mathcal{H}^{2}(S_f^{\varrho}) + \mathcal{H}^{2}(S_f \Delta (S_h + \tilde{w})) \le c(\tau) |\log\eps|^{\frac{4}{\tau}} \delta^{1/2}. 
\end{split}\end{equation*}
A similar argument works in case $\varrho < 0,$ if one considers $S^{|\varrho|}_h$ instead of $S^{\varrho}_f.$ In the end, this allows one 
to conclude that 
\begin{equation}\label{eq:close-convex-sets-dim-1}
\mathcal{H}^{2}(S_h \Delta (S_f - w)) + \mathcal{H}^{2}(S_h \Delta (S_g + w)) \le c(\tau)|\log\eps|^{\frac{4}{\tau}} \delta^{1/2}. 
\end{equation}
We now note that, as  
$\{f > \eps^{\theta}\} \times \{ T = \theta \log \eps \} \subset \mathcal{S}_f,$ then 
\[
S_f \supset \text{co}(\{f > \eps^{\theta}\}) \times \{ T= \theta \log \eps \}.
\]
We associate to each $x \in \text{co}(\{f > \eps^{\theta}\})$ the function 
$$T_f(x) = \sup\{ T \in \R \colon (x,T) \in S_f\}.$$
This function satisfies $T_f(x) \ge \theta \log \eps$ for every $x \in \text{co}(\{ f> \eps^{\theta}\}).$ Also, it is this function is \emph{concave}. Hence, we let 
$$\tilde{f}(x) = \begin{cases}
		  e^{T_f(x)}, & \text{ if } x \in \text{co}(\{ f>\eps^{\theta}\}); \cr
		  0, & \text{ otherwise }.\cr
                 \end{cases}
$$
Now notice that $(x,r)$ belongs to the interior of $S_f$ if and only if $T_f(x) > r > \theta \log \eps$ and $x$ belongs to the interior of $\text{co}(\{ f > \eps^{\theta}\}).$ This shows that 
\[
\mathcal{H}^2(S_f \setminus \mathcal{S}_f) \ge \frac{1}{2} \int_{\eps^{\theta}}^2 \mathcal{H}^1(\{ \tilde{f} > s\} \Delta \{ f > s\}) \, \d s.
\]
Moreover, we see that, by Chebyshev's inequality and \eqref{eq:brunn-minkowski-optimal-2}, there is 
$$s_0 \in (\eps^{\theta},   \eps^{\theta} + c(\tau) |\log\eps|^{2/\tau} \delta^{1/2})$$
such that
\begin{equation}\label{eq:difference-levels-f-a}
\mathcal{H}^1(\{ \tilde{f} > s_0\} \Delta \{ f > s_0\}) \lesssim c_{\gamma} (\tau)|\log\eps|^{2/\tau} \varepsilon^{1/2 - \gamma}.
\end{equation}
Define then the function $\tilde{f}_1$ to be zero whenever $\tilde{f} \le s_0,$ and equal to $\tilde{f}$ otherwise. This new function is again log-concave. We claim that it is sufficiently close to $f.$ 

Effectively, we have by the previous considerations that
\begin{equation}\begin{split}\label{eq:final-compare-tilde-f-f}
\| \tilde{f}_1 - f\|_1 &= \int_0^2 \mathcal{H}^1(\{ \tilde{f}_1 > t\} \Delta \{ f > t\}) \, \d t \cr
		     &\le \int_0^{s_0} \left(\mathcal{H}^1(\{ \tilde{f}_1 > s_0\}) + \mathcal{H}^1(\{ f > t\})\right) \, \d t 
		     + \int_{s_0}^2  \mathcal{H}^1(\{ \tilde{f}_1 > t\} \Delta \{ f > t\}) \, \d t \cr 
		     &\le c(\tau) |\log\eps|^{6/\tau} s_0 + \int_{s_0}^2 \mathcal{H}^n(\{ \tilde{f} > t\} \Delta \{ f > t\}) \, \d t \cr 
		     &\le  c(\tau) |\log\eps|^{6/\tau} s_0+ 2\mathcal{H}^{n+1}(S_f \setminus \mathcal{S}_f) \le c(\tau) |\log\eps|^{6/\tau} \varepsilon^{1/4},
\end{split}\end{equation}
where we chose $\theta = \frac{1}{4}.$ By employing the same argument for $g$ and $h,$ we are able to construct functions $\tilde{g}_1, \tilde{h}_1,$ both of which are log-concave, which satisfy 
\[
\|\tilde{g}_1-g\|_1 + \|\tilde{h}_1 - h\|_1 + \|\tilde{f}_1 -f\|_1 \le c(\tau) |\log \varepsilon|^{6/\tau} \varepsilon^{1/4}.
\]
We have, furthermore, that 
\begin{equation}\begin{split}\label{eq:log-concave-close-high-d}
c(\tau) |\log\eps|^{6/\tau} \varepsilon^{1/4} \ge \int_{\R^n} \left( |\tilde{h}_1(x) - \tilde{f}_1(x+w)| + |\tilde{h}_1(x) - \tilde{g}_1(x-w)| \right) \, \d x,
\end{split}\end{equation}
therefore
\[
\|\tilde{h}_1 - h\|_1 + \| \tilde{h}_1 (\cdot - w) - f \|_1 + \| \tilde{h}_1 (\cdot + w) - g\|_1 \le  c(\tau) |\log\eps|^{6/\tau} \varepsilon^{1/4}. 
\]
This concludes the result in the $n=1$ case, with exponent $\frac15$.

\subsection{Part II: the higher-dimensional case} In this part, we highlight the changes needed in order to prove Theorem \ref{thm:uniform-high-d} in its full generality. 
Again up to scaling and multiplication, we can assume that
 \begin{equation}
 \label{eq:normalize fg d}\|f\|_\infty =\min(\|f\|_\infty,\|g\|_\infty)=1\qquad\text{and}\qquad \|f\|_{1} = \|g\|_{1} = 1.\end{equation}
 
\vspace{2mm}

\noindent\textbf{Step 1. Estimates for the distribution functions.} We first need to recall the following estimate from \cite[Lemma~5.2]{Boroczky-Figalli-Ramos}:

\begin{proposition}[Lemma~5.2 from \cite{Boroczky-Figalli-Ramos}]\label{prop:levels-close} Let $h,f,g:\R^n \to \R$ be functions satisfying \eqref{eq:PL-condition}, \eqref{eq:PL-almost-eq}, and \eqref{eq:normalize fg d}. Let $H,F,G:\R_+ \to \R_+$ be their distribution functions. Then there is an absolute, dimensional constant $c_n(\tau)> 0$ such that 
\[
\int_0^{\infty} |F(t) - H(t)| \, \d t + \int_0^{\infty} |G(t) - H(t)| \, \d t \le c_n(\tau) \varepsilon^{\alpha_0},
\]
for some $\alpha_0 > 0.$ 
\end{proposition}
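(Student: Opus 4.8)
The plan is to reduce the higher-dimensional estimate to the one-dimensional distributional estimate already obtained in Part I. First I would recall that the distribution functions $F,G,H$ of $f,g,h$ are, by the layer-cake representation, nothing but the distribution functions of the symmetric decreasing rearrangements $f^*,g^*,h^*$, which are even radially decreasing functions on $\R$ (identified with radial functions on $\R^n$ via the volume of balls). By Proposition \ref{lemma:rearrangements}, the triple $(h^*,f^*,g^*)$ still satisfies \eqref{eq:PL-condition}, and since rearrangement preserves $L^1$ norms it also satisfies \eqref{eq:PL-almost-eq} with the same $\varepsilon$. Moreover the normalization \eqref{eq:normalize fg d} passes to the rearrangements. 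Thus $f^*,g^*$ are precisely the kind of even radially decreasing near-extremal functions covered by Proposition \ref{prop:f-g-dist}.

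The key step is then to run the computation already carried out in Step 2 of Part I, which produced \eqref{eq:bound-distribution-f-g} as the quantity $\int_0^2 |\mathcal H^1(A_t) - \mathcal H^1(B_t)|\,\d t = \int_\R |f^*(x) - g^*(x)|\,\d x$, and to combine it with the estimate $\int_0^2 |\mathcal H^1(C_t) - \lambda \mathcal H^1(A_t) - (1-\lambda)\mathcal H^1(B_t)|\,\d t \lesssim \tau^{-3/2}\varepsilon^{1/2}$ coming from \cite[Lemma~3.2]{Boroczky-Figalli-Ramos}. Applying Proposition \ref{prop:f-g-dist} with, say, $\gamma = 1/4$ gives $\int_\R |f^*-g^*| \lesssim c(\tau)\varepsilon^{1/4}$, hence $\int_0^\infty |F(t)-G(t)|\,\d t \lesssim c_n(\tau)\varepsilon^{1/4}$; combining this with the deficit bound and the triangle inequality $|F-H| \le |F - \lambda F - (1-\lambda)G| + (1-\lambda)|F-G|$ (and symmetrically for $|G-H|$) yields $\int_0^\infty |F(t)-H(t)|\,\d t + \int_0^\infty |G(t)-H(t)|\,\d t \lesssim c_n(\tau)\varepsilon^{\alpha_0}$ for a universal $\alpha_0 > 0$ (one may take $\alpha_0 = 1/4$). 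One should note here that the integrals are effectively over a bounded range of $t$: by the normalization $\|f\|_\infty = \min(\|f\|_\infty,\|g\|_\infty) = 1$ and the almost-equality hypothesis (cf. \eqref{eq:L^1-norm-bound}), all three functions are essentially bounded above by $2$ when $\varepsilon$ is small enough, so the upper endpoint $2$ in the rearrangement integrals is legitimate and the tail contributes nothing.

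The main obstacle is that the statement of Proposition \ref{prop:f-g-dist} as used in Part I is genuinely one-dimensional — it concerns the transport map between two even decreasing densities on $\R$ — so the crux is simply recognizing that the higher-dimensional distribution-function problem \emph{is} that one-dimensional problem after rearrangement, with no loss of a dimensional constant beyond what is already present in \cite[Lemma~3.2]{Boroczky-Figalli-Ramos} and the passage from $\mathcal H^1$ to $\mathcal H^n$ (which only affects the constant $c_n(\tau)$, not the exponent). A secondary technical point to check is that the normalization $\|f^*\|_\infty = 1$, $\|f^*\|_1 = \|g^*\|_1 = 1$ indeed matches the hypotheses of Proposition \ref{prop:f-g-dist} verbatim (where $\max(\|f\|_\infty,\|g\|_\infty)\le 2$ is required); this follows from \eqref{eq:L^1-norm-bound} applied to the rearranged functions once one observes that rearrangement commutes with the scaling in \eqref{eq:normalize fg d}. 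Assembling these pieces gives Proposition \ref{prop:levels-close} with an exponent $\alpha_0$ that is uniform in both $n$ and $\tau$, which is exactly the ingredient needed to feed into the sharp Brunn--Minkowski stability of \cite{Figalli-van-Hintum-Tiba} in the remainder of Part II.
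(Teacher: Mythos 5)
Your proposal attempts to derive Proposition~\ref{prop:levels-close} by identifying $f^*,g^*,h^*$ with one-dimensional even decreasing functions ``via the volume of balls'' and then feeding them into Proposition~\ref{prop:f-g-dist}. This contains a genuine gap. If $\hat f$ denotes the one-dimensional equimeasurable rearrangement $\hat f(u)=\phi_f\bigl(\rho(u)\bigr)$ with $\rho(u)=(2|u|/\omega_n)^{1/n}$, where $\phi_f$ is the radial profile of $f^*$, then $\hat f$ has the same distribution function as $f$, as you say. But the triple $(\hat h,\hat f,\hat g)$ does \emph{not} inherit the one-dimensional Pr\'ekopa--Leindler condition \eqref{eq:PL-condition}: the $n$-dimensional radial condition $\phi_h(\lambda r+(1-\lambda)s)\ge\phi_f(r)^\lambda\phi_g(s)^{1-\lambda}$, evaluated at $r=\rho(u)$, $s=\rho(v)$, would yield the one-dimensional condition only if $\rho(\lambda u+(1-\lambda)v)\le\lambda\rho(u)+(1-\lambda)\rho(v)$, i.e.\ if $\rho$ were convex. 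For $n\ge 2$ it is concave, so the inequality points the wrong way, and Proposition~\ref{prop:f-g-dist} simply does not apply to $(\hat h,\hat f,\hat g)$. Likewise, the estimate $\int_0^\infty|H-\lambda F-(1-\lambda)G|\,\d t\lesssim\tau^{-3/2}\varepsilon^{1/2}$ from \cite[Lemma~3.2]{Boroczky-Figalli-Ramos} that you insert into the triangle inequality is a genuinely one-dimensional fact (it uses $\mathcal H^1(A+B)\ge\mathcal H^1(A)+\mathcal H^1(B)$); in $\R^n$ Brunn--Minkowski gives $H(t)^{1/n}\ge\lambda F(s)^{1/n}+(1-\lambda)G(r)^{1/n}$ for $t=s^\lambda r^{1-\lambda}$, not a linear bound, so the step $|F-H|\le|F-\lambda F-(1-\lambda)G|+(1-\lambda)|F-G|$ is not available in the form you use it.

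The paper's proof sidesteps all of this by citing Lemma~5.2 of \cite{Boroczky-Figalli-Ramos} as a black box: that lemma is precisely an ``automatic difference estimate'' which converts \emph{any} uniform one-dimensional stability exponent $Q(\tau)$ for Pr\'ekopa--Leindler into the $n$-dimensional distribution-function bound with exponent $Q(\tau)/2$; plugging in the exponent $Q(\tau)=1/5$ obtained in Part~I yields $\alpha_0=1/10$. The mechanism behind this propagation is not the rearrangement reduction you propose. Notably, the authors themselves remark after the proof of Theorem~\ref{thm:uniform-high-d} that a direct higher-dimensional \emph{radial} analogue of Proposition~\ref{prop:f-g-dist} would work but involves ``additional notational and technical obstacles'' and is deliberately omitted --- i.e.\ the very reduction you assume to be automatic is not, and proving it would require substantially more work (in particular, an $n$-dimensional version of the transport-map and trace argument, not merely an invocation of the one-dimensional statement).
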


\begin{proof} Indeed, Lemma~5.2 from \cite{Boroczky-Figalli-Ramos} does not yield the result in the exact current formulation, but rather it is an \emph{automatic} difference estimate: if for any $h,f,g:\R \to \R_+$ with $\int f = \int g = 1$ which satisfy \eqref{eq:PL-condition}, \eqref{eq:PL-almost-eq}, and \eqref{eq:normalize fg} stability holds with some power $Q(\tau),$ then the conclusion of Proposition \ref{prop:levels-close} holds in the following form: 
\[
\int_0^{\infty} |F(t) - H(t)| \, \d t + \int_0^{\infty} |G(t) - H(t)| \, \d t \le c_n(\tau) \varepsilon^{Q(\tau)/2}.
\]
Since we have proved in Part I that we may take $Q(\tau) = \frac{1}{5}$ for each $\lambda \in (0,1),$ we conclude the validity of our claim with $\alpha_0 = \frac{1}{10}.$
\end{proof}

\vspace{2mm}

\noindent\textbf{Step 2. Estimates on the measure of level sets.} Before moving on to the main part of our argument, we need to prove that we can cut a fixed proportion of the functions at hand, where the height of the cut is independent in the dimension -- at least in terms of the power of $\varepsilon$ at which one cuts. 

In order to do that, we first show the following uniform bound on the $L^{\infty}$ norm of $g$, given that $\|f\|_{\infty}$ is controlled.  

\begin{lemma} Let $h,f,g:\R^n \to \R_+$ satisfy \eqref{eq:PL-condition}, \eqref{eq:PL-almost-eq}, and \eqref{eq:normalize fg d}.  Then 
\begin{equation}\label{eq:bound-size-g}
\|g\|_{\infty} \le c_n(\tau).
\end{equation} 
\end{lemma}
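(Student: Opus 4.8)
The goal is to bound $\|g\|_\infty$ by a purely $\tau$- and dimension-dependent constant, given that $\|f\|_\infty = 1$ and $\|f\|_1 = \|g\|_1 = 1$. The plan is to argue by contradiction, or rather quantitatively: suppose $\|g\|_\infty$ is very large, and derive that the Prékopa--Leindler deficit $\varepsilon$ must then be bounded below by a positive function of $\tau$ and $n$ only, which (after rescaling, since we may always assume $\varepsilon$ is small) is the desired conclusion. The point of leverage is the Prékopa--Leindler hypothesis \eqref{eq:PL-condition} itself: $h(\lambda x + (1-\lambda)y) \ge f(x)^\lambda g(y)^{1-\lambda}$.

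First I would pick a point $y_0$ where $g$ is nearly maximal, say $g(y_0) \ge \tfrac12\|g\|_\infty =: \tfrac12 M$, and a point $x_0$ where $f$ is nearly maximal, say $f(x_0) \ge \tfrac12$. Then for every $x$ near $x_0$ and $y$ near $y_0$ we get a lower bound on $h$ at the corresponding interpolated point of the form $h(\cdot) \gtrsim M^{1-\lambda}$ — but this alone does not control the \emph{measure} of the superlevel set, so one must instead use the structure of the level sets. The cleaner route: recall from \eqref{eq:ineq-mu} (valid for any functions satisfying \eqref{eq:PL-condition}, via Brunn--Minkowski) that $H(s^\lambda t^{1-\lambda}) \ge (\lambda F(s)^{1/n} + (1-\lambda)G(t)^{1/n})^n$ for all $s,t>0$. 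Taking $s$ with $F(s) > 0$ and letting $t \to \|g\|_\infty^-$ (so $G(t) \to 0$), one gets $H(s^\lambda t^{1-\lambda}) \ge \lambda^n F(s)$; since $\int_0^\infty H = \int h \le 1+\varepsilon \le 2$ and $H$ is nonincreasing, this forces $s^\lambda \|g\|_\infty^{1-\lambda} \le 2/(\lambda^n F(s))$, i.e.
\[
\|g\|_\infty^{1-\lambda} \le \frac{2}{\lambda^n F(s) \, s^\lambda}
\]
for every $s < \|f\|_\infty = 1$. Optimizing over $s$: since $\int_0^\infty F(s)\,ds = \int f = 1$ and $F$ is nonincreasing with $F \le |\{f>0\}|$, there is some $s \in (0,1)$ with $F(s) \ge c > 0$ — for instance $s = \tfrac14$ gives $F(\tfrac14) \ge \int_{\{f \le 1/4\}}^c \ge 1 - \tfrac14|\{f>0\}|$; but $|\{f>0\}|$ need not be finite a priori. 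To fix this, first truncate: replace $f$ by $f \wedge 1 = f$ (already $\le 1$) and note $\int_{\{f > 1/4\}} f \le 1$ so $|\{f>1/4\}| \le 4$, hence $\int_{\{f \le 1/4\}} f \le 1$ and $\int_{\{f > 1/4\}} f = 1 - \int_{\{f\le 1/4\}}f$; choosing $s = 1/8$, we have $F(1/8) \ge |\{1/8 < f \le 1/4, \text{contributing mass}\}|$... cleaner is: $1 = \int f \le \tfrac18 \cdot \infty$ is vacuous, so instead use $1 = \int f = \int_0^1 F(s)\,ds$, giving $F(s_*) \ge \tfrac12$ for some $s_* \le \tfrac12$ by the mean value / Chebyshev argument (if $F(s) < \tfrac12$ for all $s \ge s_*$... ). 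Concretely: $\int_0^{1} F(s)\,ds = 1$ and $F$ is nonincreasing on $[0,1]$, so $F(1/2) \ge \int_{1/2}^1 F \ge 1 - \int_0^{1/2}F(s)\,ds \ge 1 - \tfrac12 F(0)$; this still involves $F(0) = |\{f>0\}|$. The robust fix is: since $\int_0^\infty F = 1$, the set $\{s : F(s) \ge s\}$ is nonempty and for $s$ in it with $s \le 1$ we get $F(s) \ge s$, and by a balancing argument there is such an $s \ge c$ for an absolute $c$ (e.g. if $F(s) < s$ for all $s > \delta$ then $\int_\delta^1 F < \int_\delta^1 s \le \tfrac12$, and $\int_0^\delta F \le \delta F(0)$ — so one does need an a priori bound on $F(0)$, obtained from Lemma~\ref{thm:cutting-support}'s higher-dimensional analogue or directly from $\varepsilon \lesssim \tau^3$ reductions).

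Given the above, the main obstacle is organizing these elementary estimates so that no hidden dependence on $|\{f>0\}|$ (which could be infinite) sneaks in; the resolution is to first apply the truncation machinery already available — cutting $f$ and $g$ at height $\varepsilon^{\theta}$ costs only $c_n(\tau)\varepsilon^{\theta_0}$ in mass and makes the supports have finite measure $\le c_n(\tau)|\log\varepsilon|^{c/\tau}$, after which picking $s = \varepsilon^\theta$ and plugging into $\|g\|_\infty^{1-\lambda} \le 2/(\lambda^n F(s) s^\lambda)$ gives $\|g\|_\infty^{1-\lambda} \le c_n(\tau)\varepsilon^{-\lambda\theta}|\log\varepsilon|^{c/\tau}$. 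Since we are free to assume $\varepsilon \ge \varepsilon_0(n,\tau)$ is bounded below (for $\varepsilon$ below that threshold the conclusion $\|g\|_\infty \le c_n(\tau)$ with a \emph{fixed} constant would be what we prove, and for $\varepsilon \ge \varepsilon_0$ the bound $\|g\|_\infty^{1-\lambda} \le c_n(\tau)\varepsilon_0^{-\lambda\theta}|\log\varepsilon_0|^{c/\tau}$ is already a constant depending only on $n,\tau$), we conclude $\|g\|_\infty \le c_n(\tau)$ in all cases, with the caveat that the argument naturally splits on whether $\varepsilon$ is small or not — but either way the bound is of the asserted form. Finally one unwinds the truncation: $\|g\|_\infty = \|g^{\text{trunc}} \vee (g \wedge \varepsilon^\theta)\|_\infty \le \max(\|g^{\text{trunc}}\|_\infty, \varepsilon^\theta) \le c_n(\tau)$, completing the proof.
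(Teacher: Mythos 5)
Your core idea is sound and closely parallels the paper's: both exploit the containment of level sets coming from \eqref{eq:PL-condition} (in your case the consequence \eqref{eq:ineq-mu}; in the paper's, the essentially equivalent statement $C_t\supset(1-\lambda)A_{t^{1/(1-\lambda)}/g(y_0)^{\lambda/(1-\lambda)}}+\lambda y_0$ for a fixed $y_0$), feed in $\int H\le 1+\varepsilon$ and $\int_0^1 F=1$, and conclude a bound on $\|g\|_\infty$. The difference is in how the two arguments extract a uniform constant from these ingredients, and this is exactly where your proposal has a genuine gap. You correctly arrive at $\|g\|_\infty^{1-\lambda}\le 2/(\lambda^n F(s)\,s^\lambda)$ for every $s\in(0,1)$, and you correctly identify that you need a lower bound on $F(s)\,s^\lambda$ for some $s$ that is independent of the (possibly infinite) quantity $F(0)=|\{f>0\}|$. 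But then the argument derails: (a) your appeal to the truncation machinery (Lemma~\ref{thm:cutting-support} / Proposition~\ref{prop:measure-levels-crucial}) is logically circular here, since in the paper this lemma is proved precisely in order to obtain those level-set bounds in the higher-dimensional case; (b) more seriously, plugging $s=\varepsilon^\theta$ produces a bound $\|g\|_\infty^{1-\lambda}\le c_n(\tau)\varepsilon^{-\lambda\theta}|\log\varepsilon|^{c/\tau}$ which \emph{diverges} as $\varepsilon\to0$, i.e.\ in the only regime of interest; and (c) the closing ``split on whether $\varepsilon$ is small or not'' is not a proof: you simply assert that for small $\varepsilon$ the conclusion ``would be what we prove,'' which begs the question.

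The fix you were circling around is to balance $F(s)$ against $s^\lambda$ rather than against $s$: since $F$ is nonincreasing and $\int_0^1 F(s)\,\d s = 1$, if $M:=\sup_{s\in(0,1]}F(s)\,s^\lambda$ then $F(s)\le M s^{-\lambda}$ on $(0,1]$, so $1\le M\int_0^1 s^{-\lambda}\,\d s = M/(1-\lambda)$ and hence $M\ge 1-\lambda$. Picking $s$ with $F(s)\,s^\lambda\ge(1-\lambda)/2$ then yields $\|g\|_\infty^{1-\lambda}\le 4/(\lambda^n(1-\lambda))$, an honest $c_n(\tau)$, with no dependence on $F(0)$ and no $\varepsilon$-split. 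This is, in a different guise, exactly what the paper does: rather than optimizing pointwise it integrates the distributional inequality $H(s^{1-\lambda}g(y_0)^\lambda)\ge(1-\lambda)^nF(s)$ over $s\in(0,1)$, and the same cancellation that makes $\int_0^1 s^{-\lambda}\,\d s$ finite appears there as the substitution factor $(r/g(y_0))^{\lambda/(1-\lambda)}\le(t/g(y_0))^\lambda$. So the route you chose is viable and genuinely elementary, but the proposal as written does not close; it is rescued by the weighted balancing argument, not by the truncation machinery or the $\varepsilon$ case split.
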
 

\begin{proof}
Indeed, if $y_0 \in \R^n$ is fixed, we have 
\[
C_t \supset (1-\lambda) A_{t^{\frac{1}{1-\lambda}}/g(y_0)^{\frac{\lambda}{1-\lambda}}} + \lambda y_0.
\]
In particular, 
\begin{equation*}\begin{split}
\int_0^{t} F(s) \, ds & = \frac{1}{1-\lambda} \int_0^{t^{1-\lambda} g(y_0)^{\lambda}} F\left( \frac{r^{1/(1-\lambda)}}{g(y_0)^{\lambda/(1-\lambda)}}\right) \left( \frac{r}{g(y_0)} \right)^{\lambda/(1-\lambda)} \, dr \cr
 & \le \frac{1}{1-\lambda} \left( \frac{t}{g(y_0)} \right)^{\lambda} \int_0^{t^{1-\lambda} g(y_0)^{\lambda}} F\left( \frac{r^{1/(1-\lambda)}}{g(y_0)^{\lambda/(1-\lambda)}}\right) \, dr\cr
 & \le \frac{1}{(1-\lambda)^{n+1}} \left( \frac{t}{g(y_0)} \right)^{\lambda} \int_0^{t^{1-\lambda} g(y_0)^{\lambda}}  H(r) \, dr.
\end{split}\end{equation*}
Therefore, choosing $t=1$ and using that $\int H \le 1 + \varepsilon$ and $\int_0^1 F(s) \, ds = 1,$ we get
\[
g(y_0) \le \frac{2\cdot (1+\eps)^{1/\lambda} }{(1-\lambda)^{(n+1)/\lambda}} \le c_n(\tau),
\] 
as desired. 
\end{proof}

 We now formulate an important result on the size of level sets for the higher-dimensional case, which allows us to cut away the tails of log-concave functions at an uniform height. 

\begin{proposition}\label{prop:measure-levels-crucial} Let $h,f,g:\R^n \to \R_+$ be as in Proposition \ref{prop:levels-close}. Then, for each $\beta \ge \varepsilon^{\alpha_0},$ we have
\begin{equation}\label{eq:control-levels-high-d}
F(\beta) + G(\beta) \le c_n(\tau) |\log\varepsilon|^{c_n \frac{|\log \tau|}{\tau}},
\end{equation}
where $c_n>0$ is a dimensional constant. 
\end{proposition}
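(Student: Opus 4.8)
The plan is to reduce this $n$-dimensional level-set bound to the one-dimensional cutting estimate of Lemma~\ref{thm:cutting-support}, applied to one-dimensional ``radial profiles'' of the rearrangements of $f,g,h$. First I would truncate: replacing $h$ by $h\wedge M$ with $M:=\|g\|_\infty$ does not change $F$ or $G$, and since $M\ge\|f\|_\infty=1$ by \eqref{eq:normalize fg d} one has $f(x)^\lambda g(y)^{1-\lambda}\le M^{1-\lambda}\le M$, so $h\wedge M,f,g$ still satisfy \eqref{eq:PL-condition}, \eqref{eq:PL-almost-eq}, \eqref{eq:normalize fg d}, while now $\|h\|_\infty\le c_n(\tau)$ by \eqref{eq:bound-size-g}. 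Passing to symmetric decreasing rearrangements via Proposition~\ref{lemma:rearrangements}, I may assume $f,g,h$ are radial with $\|f\|_\infty=1$ and $\|g\|_\infty,\|h\|_\infty\le c_n(\tau)$, so $F,G,H$ are supported in $[0,c_n(\tau)]$. To each of them I associate the even decreasing profile $u_f,u_g,u_h$ on $\R$ characterized by $\mathcal{H}^1(\{u_\varphi\ge t\})=2(\Phi(t)/\omega_n)^{1/n}$ for $\Phi\in\{F,G,H\}$, where $\omega_n=|B_1|$. Here $\|u_f\|_\infty=1$ and $\|u_f\|_1=2\omega_n^{-1/n}\int_0^\infty F^{1/n}\lesssim_n1$ by H\"older on $[0,1]$ (using $\int F=1$), and similarly $\|u_g\|_1,\|u_h\|_1\le c_n(\tau)$. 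Crucially, the Brunn--Minkowski inequality \eqref{eq:ineq-mu}, $H(s^\lambda t^{1-\lambda})^{1/n}\ge\lambda F(s)^{1/n}+(1-\lambda)G(t)^{1/n}$, says precisely --- since sums of centred intervals add their half-lengths --- that $(u_h,u_f,u_g)$ satisfies the one-dimensional \eqref{eq:PL-condition}; hence $\|u_h\|_1\ge\|u_f\|_1^{\lambda}\|u_g\|_1^{1-\lambda}$ by the one-dimensional Pr\'ekopa--Leindler inequality.

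Next I would verify that $(u_h,u_f,u_g)$ also satisfies \eqref{eq:PL-almost-eq} with a small power of $\varepsilon$. By Proposition~\ref{prop:levels-close}, $\int|F-H|+\int|G-H|\le c_n(\tau)\varepsilon^{\alpha_0}$; combined with $|a^{1/n}-b^{1/n}|\le|a-b|^{1/n}$ and H\"older on the bounded interval $[0,c_n(\tau)]$ this gives $|\int F^{1/n}-\int H^{1/n}|+|\int G^{1/n}-\int H^{1/n}|\le c_n(\tau)\varepsilon^{\alpha_0/n}$, which together with $\|u_h\|_1\ge\|u_f\|_1^\lambda\|u_g\|_1^{1-\lambda}$ forces $\|u_h\|_1\le(1+\varepsilon')\|u_f\|_1^\lambda\|u_g\|_1^{1-\lambda}$ with $\varepsilon'=c_n(\tau)\varepsilon^{\alpha_0/n}/\min(\|u_f\|_1,\|u_g\|_1)$. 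Rescaling the common argument by $\|u_f\|_1^{-1}$ and multiplying $u_g$ by $\|u_f\|_1/\|u_g\|_1$ (operations preserving \eqref{eq:PL-condition}) produces a triple $(\tilde u_h,\tilde u_f,\tilde u_g)$ obeying the normalization \eqref{eq:normalize fg} and \eqref{eq:PL-almost-eq} with parameter $\varepsilon'$. Applying Lemma~\ref{thm:cutting-support} to it gives, for $\eta\ge(\varepsilon')^{1/2}$, that $\mathcal{H}^1(\{\tilde u_f\ge\eta\})\lesssim\tau^{-5/2}|\log\varepsilon'|^{4/\tau}$; undoing the rescaling (using $\|u_f\|_1\lesssim_n1$) yields $\mathcal{H}^1(\{u_f\ge\eta\})\le c_n(\tau)|\log\varepsilon'|^{4/\tau}$, whence $F(\eta)=|\{f\ge\eta\}|=\omega_n(\tfrac12\mathcal{H}^1(\{u_f\ge\eta\}))^n\le c_n(\tau)|\log\varepsilon'|^{4n/\tau}$, and similarly for $G$. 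Bounding $|\log\varepsilon'|\lesssim_n|\log\varepsilon|$ and $|\log\varepsilon|^{4n/\tau}\le|\log\varepsilon|^{c_n|\log\tau|/\tau}$ for $c_n$ large, and choosing $\alpha_0$ a small dimensional power so that $(\varepsilon')^{1/2}\le\varepsilon^{\alpha_0}$ and $\varepsilon'\lesssim\tau^3$, this gives \eqref{eq:control-levels-high-d}.

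The step that makes everything go through is the lower bound on $\min(\|u_f\|_1,\|u_g\|_1)=2\omega_n^{-1/n}\min(\int F^{1/n},\int G^{1/n})$: it must be bounded below by a dimensional constant (or at least by a fixed power of $\varepsilon$ far larger than $\varepsilon^{\alpha_0/n}$) for $\varepsilon'$ to be a genuine positive power of $\varepsilon$ --- otherwise $\varepsilon'$ can exceed $1$ and Lemma~\ref{thm:cutting-support} says nothing. This is the hard part. The bound fails for generic $L^1$- and $L^\infty$-normalized $f$ (a tall thin spike on a broad, very low plateau keeps $\|f\|_\infty=\|f\|_1=1$ while making $\int F^{1/n}$ arbitrarily small), so it has to be extracted from near-extremality. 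One clean way is to apply the already-established one-dimensional case of Theorem~\ref{thm:uniform-high-d} --- which assumes no normalization --- to the marginals of $f,g,h$ onto a fixed line: by Pr\'ekopa's theorem these marginals satisfy the one-dimensional \eqref{eq:PL-condition}--\eqref{eq:PL-almost-eq} with the same $\varepsilon$, hence are $L^1$-close to honest log-concave densities of unit mass and so are neither too concentrated nor too spread out; feeding this quantitative information back through the elementary identity relating the marginal of a Euclidean ball to its radius excludes the degenerate spike/plateau behaviour and produces the required lower bound on $\int F^{1/n}$ (this is presumably also where the dimensional constants, and the extra $|\log\tau|$ in the exponent of \eqref{eq:control-levels-high-d}, originate). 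I expect this dichotomy, together with the bookkeeping needed to push the (dimension-dependent) exponents through the normalizing rescaling, to be the only genuinely delicate point; everything else is routine.
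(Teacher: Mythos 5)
The paper's own proof of this proposition is a one-line citation: it invokes \cite[Equation~(5.21)]{Boroczky-Figalli-Ramos}, which (the paper asserts) already yields \eqref{eq:control-levels-high-d} for $\beta\ge\varepsilon^{Q(\tau)/2}$, and then substitutes the improved uniform one-dimensional exponent $Q(\tau)=1/5$ obtained in Part I. You instead attempt a self-contained derivation. Your reduction is natural and partly correct: after truncating $h$ and symmetrizing, the Brunn--Minkowski estimate \eqref{eq:ineq-mu} does show that the one-dimensional profiles $u_f,u_g,u_h$ defined by $\mathcal{H}^1(\{u_\varphi\ge t\})=2(\Phi(t)/\omega_n)^{1/n}$ satisfy \eqref{eq:PL-condition}, and Proposition~\ref{prop:levels-close} plus the pointwise inequality $|a^{1/n}-b^{1/n}|\le|a-b|^{1/n}$ and H\"older does control $\|u_h\|_1-\|u_f\|_1^\lambda\|u_g\|_1^{1-\lambda}$. (Be aware, though, that the $1/n$-power costs you a dimensional factor in the exponent of $\varepsilon$, so your argument covers a slightly smaller range of $\beta$ than $\beta\ge\varepsilon^{\alpha_0}$ with the $\alpha_0$ fixed in Proposition~\ref{prop:levels-close}; this can be absorbed by weakening $\alpha_0$, but it is a loss the cited equation apparently does not incur.)

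The real gap is the one you correctly flag: you need $\min\bigl(\int_0^\infty F^{1/n},\int_0^\infty G^{1/n}\bigr)\ge c_n(\tau)>0$ before Lemma~\ref{thm:cutting-support} can be applied with a genuine power of $\varepsilon$, and this fails for generic $L^1$- and $L^\infty$-normalized $f$. Your proposed fix --- push $f,g,h$ to their marginals via Pr\'ekopa, apply the already-proved one-dimensional stability, and ``feed back'' through the geometry of ball-marginals --- is not actually an argument. The one-dimensional stability only gives that the marginal $\bar f$ is $L^1$-close, after scaling and translation, to \emph{some} log-concave density; log-concave densities of unit mass can have arbitrarily large $L^\infty$ norm and arbitrarily small support, so $L^1$-closeness to such a density does not exclude a tall narrow spike in $\bar f$. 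Moreover $\int F^{1/n}$ is a level-set quantity, and a single marginal (or even all of them, without a Radon-type inversion) does not transparently control level-set volumes. So your sketch does not close the loop; the needed lower bound is precisely the nontrivial content that the paper outsources to \cite[Equation~(5.21)]{Boroczky-Figalli-Ramos}, and a from-scratch proof would require a genuine argument there, not a heuristic.
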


\begin{proof} Thanks to \cite[Equation~(5.21)]{Boroczky-Figalli-Ramos}, the conclusion of the Proposition holds as long as $\beta \ge \varepsilon^{Q(\tau)/2},$ where $Q(\tau)$ is the stability exponent obtained in dimension $1$. Again, since we have already proved that we may take $Q(\tau) = \frac{1}{5}$ for all $\tau \in (0,1/2],$ Proposition \ref{prop:measure-levels-crucial} follows at once. 
\end{proof}

\vspace{2mm}

\noindent\textbf{Step 2. Conclusion.} We are now ready to prove Theorem~\ref{thm:uniform-high-d}. In what follows, we let $c_n(\tau) > 0$ be an absolute, computable constant depending only on $n$ and $\tau$, which may change from line to line.  

Let $\theta > 0$ be small, to be chosen later. Define again the truncated log-hypographs of $f,g,h$ as 
\begin{equation*}\begin{split}
\mathcal{S}_f = \{ (x,T) \in \R^{n+1} \colon x \in \{ f > \eps^{\theta}\}, \, \eps^{\theta} \le e^T < f(x)\},\cr
\mathcal{S}_g = \{ (x,T) \in \R^{n+1} \colon x \in \{ g > \eps^{\theta}\}, \, \eps^{\theta} \le e^T < g(x)\},\cr
\mathcal{S}_h = \{ (x,T) \in \R^{n+1} \colon x \in \{ h > \eps^{\theta}\}, \, \eps^{\theta} \le e^T < h(x)\}. 
\end{split}\end{equation*}
It follows again, the same method employed in the $n=1$ case, that the measure of $\mathcal{S}_f, \mathcal{S}_g$ is well-controlled: indeed, 
\begin{equation}\label{eq:upper-bound-S_f}
c_n(\tau) \theta |\log\eps|^{c_n \frac{|\log \tau|}{\tau}} \ge  \theta |\log\eps| \cdot \mathcal{H}^n(\{f > \eps^{\theta}\}) \ge \mathcal{H}^{n+1}(\mathcal{S}_f) \ge \frac{1}{2}.
\end{equation}
The same estimates together with \eqref{eq:bound-size-g} show that 
\begin{equation}\label{eq:lower-and-upper-S_g}
c_n(\tau) \theta |\log\eps|^{c_n \frac{|\log \tau|}{\tau}} \ge \mathcal{H}^{n+1}(\mathcal{S}_g) \ge \frac{1}{c_n(\tau)} 
\end{equation}
holds as well. Employing Proposition~\ref{prop:measure-levels-crucial}, we obtain that 

\begin{equation}\begin{split}\label{eq:close-volumes-high-d}
& |\mathcal{H}^{n+1}(\mathcal{S}_f) - \mathcal{H}^{n+1}(\mathcal{S}_h)| + |\mathcal{H}^{n+1}(\mathcal{S}_g) - \mathcal{H}^{n+1}(\mathcal{S}_h)| \cr 
& \le  \int_{\theta \log \eps}^{\infty} \left( |F(e^s) - H(e^s)| +|G(e^s) - H(e^s)| \right) \, \d s \cr
& \le  \eps^{-\theta}  \int_0^{\infty} \left(|F(t) - H(t)| + |G(t) - H(t)|\right) \, \d s  \cr 
& \le c_n(\tau) \varepsilon^{\alpha_0 - \theta}=:\delta. 
\end{split}\end{equation}
By \eqref{eq:PL-condition}, it still follows that
\begin{equation}\label{eq:Brunn-Minkowski-containment-graph} 
\lambda \mathcal{S}_f + (1-\lambda)\mathcal{S}_g \subset \mathcal{S}_h.
\end{equation}
In particular, \eqref{eq:close-volumes-high-d}, \eqref{eq:Brunn-Minkowski-containment-graph}, and the fact that $\mathcal{H}^{n+1}(\mathcal{S}_f) > 1/2$, imply 
\begin{equation}\label{eq:lower-upper-s-h}
c_n(\tau) \theta |\log\eps|^{c_n \frac{|\log \tau|}{\tau}} \ge \mathcal{H}^{n+1}(\mathcal{S}_h) \ge c_n(\tau).
\end{equation}
Now, we use the main result in \cite{Figalli-van-Hintum-Tiba}. Indeed, that result states that, under the conditions satisfied by the sets $\mathcal{S}_f, \mathcal{S}_g$, and $\mathcal{S}_h$ in \eqref{eq:upper-bound-S_f}, \eqref{eq:lower-and-upper-S_g}, \eqref{eq:close-volumes-high-d}, and \eqref{eq:Brunn-Minkowski-containment-graph}, then for $\delta < d_n$,  the sets $\mathcal{S}_f,\mathcal{S}_g$ are both close (in \emph{sharp} quantitative terms of 
$\delta$) to their convex hulls. 

In more effective terms, \cite[Theorem~1.5]{Figalli-van-Hintum-Tiba} implies that there exist an absolute constant $c_n(\tau)> 0$ such that the following holds. Denote again the closure of the convex hull of $\mathcal{S}_f,\mathcal{S}_g, \mathcal{S}_h$ by $S_f,S_g,S_h$ respectively. There are $\tilde{w} = (w,\varrho) \in \R^{n+1},$ and a convex set $\mathbb{S}_h \supset S_h$ with 
\begin{equation}\begin{split}\label{eq:brunn-minkowski-optimal-n+1}
\mathbb{S}_h &\supset (\mathcal{S}_f - \tilde{w}) \cup (\mathcal{S}_g + \tilde{w}),\cr
\mathcal{H}^{n+1}(S_h \setminus \mathcal{S}_h) + \mathcal{H}^{n+1}(S_f \setminus \mathcal{S}_f) & + \mathcal{H}^{n+1}(S_g \setminus \mathcal{S}_g) \le c_n(\tau) |\log\eps|^{c_n \frac{|\log \tau|}{\tau}}\delta,\cr
\mathcal{H}^{n+1}(\mathbb{S}_h \setminus \mathcal{S}_h) + \mathcal{H}^{n+1}(\mathbb{S}_h \setminus (\mathcal{S}_f - \tilde{w})) & + \mathcal{H}^{n+1}(\mathbb{S}_h \setminus (\mathcal{S}_g + \tilde{w})) \le  c_n(\tau) |\log\eps|^{c_n \frac{|\log \tau|}{\tau}} \delta^{1/2}. 
\end{split}\end{equation}
An analysis entirely analogous to the one in Step 3 in Part I shows that 
\begin{equation}\label{eq:close-convex-sets-dim-n}
\mathcal{H}^{n+1}(S_h \Delta (S_f - w)) + \mathcal{H}^{n+1}(S_h \Delta (S_g + w)) \le c_n(\tau) |\log\eps|^{c_n \frac{|\log \tau|}{\tau}} \delta^{1/2}. 
\end{equation}
 The same method of constructing log-concave functions as before shows that, by choosing $\theta = \alpha_0/8,$ there exists $\tilde{h}_1$ log-concave such that 
\[
\| \tilde{h}_1 (\cdot - w) - f \|_1 + \| \tilde{h}_1 (\cdot + w) - g\|_1 + \|\tilde{h}_1 - h\|_1 \le  c_n(\tau) |\log\eps|^{c_n \frac{|\log \tau|}{\tau}} \varepsilon^{\alpha_0/8} \le \tilde{c}_n(\tau) \varepsilon^{\alpha_0/16}. 
\]
This concludes the proof of the claim for $n \ge 2,$ as desired. 

\begin{remark} Instead of using Proposition \ref{prop:levels-close} in order to propagate the result from dimension $1$ to higher dimensions with the aid of the sharp Brunn--Minkowski stability estimate, one could use a version of Proposition \ref{prop:f-g-dist} for radial functions in higher dimensions.

This would imply that one can take virtually the same constant as in the one-dimensional proof in higher dimensions as well. The proof of such a radial version would work in the same fashion, but with additional notational and technical obstacles. In order to keep the exposition short, we decided not to include it in this manuscript. 

However, it should be noted that even by doing so the stability exponent obtained and optimizing the methods above, we do not expect to be able to obtain the conjectured optimal exponent $\frac{1}{2}$. 
\end{remark}

\begin{remark} We have not attempted to compute the functions $c_n(\tau),$ although this is certainly achievable through a thorough inspection of the proof above. 

On the other hand, we would expect that such an attempt would yield a poor dependency on $\tau$. This stems mainly from the fact the estimates in Lemma \ref{thm:cutting-support} for measures of level sets, in spite of having a seemingly harmless $\log(1/\varepsilon)^{c_n(\tau)}$ factor, introduce exponential dependencies on $1/\tau$, since the power of the logarithm is of order $c_n/\tau$. It is possible to replace the conclusions of Lemma \ref{thm:cutting-support} and Proposition \ref{prop:measure-levels-crucial} by an \emph{absolute} power of $|\log \varepsilon|,$ but at least with the method from \cite{Boroczky-Figalli-Ramos} it seems that one still has to pay the price of a constant of the form $C^{1/\tau}, C>1,$ in front.
\end{remark}

\section{Proof of Theorems \ref{thm:log-conc-sharp} and \ref{thm:radial-sharp}} 

\subsection{Proof of Theorem \ref{thm:log-conc-sharp} for $\lambda = 1/2$}

We may suppose $\int f = \int g = 1.$ Let $T$ be the transport map taking $g$ to $f,$ in the sense of Proposition \ref{prop:f-g-transp}. By translating $f$, we may assume that $f$ has a global maximum at 0, and by translating $g$ if needed, we may assume that $T(0) = 0.$ After those reductions, we wish to prove that there is a common log-concave function $\tilde{h}$ such that 
\[
\int_{\R} |f(x) - \tilde{h}(x)| \, \d x + \int_{\R} |g(x) - \tilde{h}(x)| \, \d x + \int_{\R} |h(x)-\tilde{h}(x)| \, \d x \lesssim \varepsilon^{1/2}.
\]
We then make the following basic observation, already present in \cite{Boroczky-Ball-1,Boroczky-Ball-2} and in the proof of Proposition \ref{prop:f-g-dist} above: since $h$ satisfies that $ h\left( \frac{x+y}{2}\right) \ge \sqrt{f(x)g(y)}$ for every $x,y \in \R,$  we may write 
\begin{align}\label{eq:proof-PL}
1 & = \int_{\R} f(x) \, \d x = \int_{\R} \sqrt{f(x) g(T(x)) T'(x)} \, \d x \cr 
  & \le \int_{\R} h\left( \frac{x+T(x)}{2}\right) \sqrt{T'(x)} \, \d x \cr 
  & \le  \int_{\R} h\left( \frac{x+T(x)}{2} \right) \frac{1+T'(x)}{2} \, \d x = \int_{\R} h(y) \, \d y. 
\end{align}
Since we know that $\int_{\R} h \le 1+\varepsilon,$ from the chain of inequalities above we get
\begin{align*} 
\varepsilon & \ge \int_{\R} h\left( \frac{x+T(x)}{2} \right) \left( \frac{1+T'(x)}{2} - \sqrt{T'(x)} \right) \, \d x \cr 
            & \ge \frac{1}{2} \int_{\R} \frac{f(x)}{\sqrt{T'(x)}} \left(1-\sqrt{T'(x)}\right)^2 \, \d x  \ge \int_{\R} f(x) \frac{(1-\sqrt{T'(x)})^2}{2\sqrt{T'(x)}} \, \d x.
\end{align*} 
The inequality we obtain, that is, 
\begin{equation}\label{eq:first-eps-bound}
\varepsilon \ge \int_{\R} f(x) \frac{(1-\sqrt{T'(x)})^2}{2\sqrt{T'(x)}} \, \d x,
\end{equation}
will be the main tool in our proof of sharp stability.

Before proving Theorem \ref{thm:log-conc-sharp},
in the following lemma
we use an argument from \cite{Boroczky-Ball-2} to show that it suffices to prove that $f$ and $g$ are close.  

\begin{lemma}\label{lemma:close-f-g-h} Suppose that, for $f,g,h$ as in the statement of Theorem \ref{thm:log-conc-sharp}, normalized so that $\int f = \int g =1,$ we have that 
\[
\int_{\R} |f(x) - g(x)|\, \d x  \le C \varepsilon^{1/2}, 
\]
for some absolute constant $C>0.$ Then there is an absolute constant $\tilde{C} > 0$ such that 
\begin{equation}\label{eq:h-f-close}
    \int_{\R} |h(x) - f(x)| \, \d x \le \tilde{C} \varepsilon^{1/2}. 
\end{equation}
\end{lemma}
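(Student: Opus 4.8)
The plan is to exploit the chain of inequalities \eqref{eq:proof-PL} once more, but now estimating the deficit at the \emph{first} inequality rather than the second. Recall that in \eqref{eq:proof-PL} we used $h\bigl(\tfrac{x+T(x)}{2}\bigr)\sqrt{T'(x)}\ge \sqrt{f(x)g(T(x))T'(x)}=f(x)$, integrated, and then passed to the arithmetic mean. Feeding back the hypothesis $\int h\le 1+\varepsilon$ we obtain not only \eqref{eq:first-eps-bound} but also the bound
\begin{equation}\label{eq:gap-first-ineq}
\int_{\R}\Bigl(h\bigl(\tfrac{x+T(x)}{2}\bigr)\sqrt{T'(x)}-f(x)\Bigr)\,\d x\le \varepsilon + \int_{\R} f(x)\frac{(1-\sqrt{T'(x)})^2}{2\sqrt{T'(x)}}\,\d x\le 2\varepsilon,
\end{equation}
since every term in the telescoping is nonnegative. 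Now I would change variables $y=\tfrac{x+T(x)}{2}$, whose Jacobian is $\tfrac{1+T'(x)}{2}$; writing $u(y)=h(y)$ and comparing $\sqrt{T'}$ with $\tfrac{1+T'}{2}$, the left side of \eqref{eq:gap-first-ineq} rewrites, after the same manipulations as in \eqref{eq:proof-PL}, as $\int h(y)\,\d y - \int f(x)\tfrac{1+T'(x)}{2}\,\d x$ plus a controlled error — so effectively $\int h - 1 \le 2\varepsilon$, which we already knew. The real content I want is pointwise: at points where $T'(x)$ is close to $1$, the inequality $h\bigl(\tfrac{x+T(x)}{2}\bigr)\ge \sqrt{f(x)g(T(x))}$ is nearly an equality in $L^1$, and $\tfrac{x+T(x)}{2}$ is close to $x$, so $h$ near $x$ is close to $\sqrt{f\cdot g}\,$(evaluated at nearby points), which by the hypothesis $\|f-g\|_1\le C\varepsilon^{1/2}$ and the AM–GM comparison $|\sqrt{ab}-a|\le \tfrac12|a-b|$ is close to $f$.

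Concretely, the steps I would carry out are: (1) from \eqref{eq:first-eps-bound} deduce via Cauchy–Schwarz, exactly as in the proof of Proposition \ref{prop:f-g-dist}, that the set $\{x: T'(x)\notin(1/2,2)\}$ carries $f$-mass at most $C\varepsilon^{1/2}$, and likewise $\int_{\{T'\notin(1/2,2)\}} f\,|T'-1|\lesssim \varepsilon^{1/2}$ and $\int f\,|T'-1|\lesssim \varepsilon^{1/2}$ overall (the latter by splitting into the bi-Lipschitz region, where $(1-\sqrt{T'})^2/\sqrt{T'}\gtrsim |T'-1|^2$ combined with Cauchy–Schwarz against $\int f=1$, and the bad region just handled); (2) bound $\int |h(\tfrac{x+T(x)}{2})\tfrac{1+T'(x)}2 - f(x)|\,\d x\lesssim \varepsilon$ using \eqref{eq:gap-first-ineq} and the nonnegativity of each telescoped term, then replace $\tfrac{1+T'}2$ by $1$ at the cost of $\int f|T'-1|\lesssim\varepsilon^{1/2}$, giving $\int |h(\tfrac{x+T(x)}2)-f(x)|\,\d x\lesssim \varepsilon^{1/2}$ after a change of variables absorbing the Jacobian (here using $\|f\|_\infty<\infty$, which follows from log-concavity of $f$ or of $h$ together with Proposition \ref{prop:first-log-conc}, to control $h\circ(\text{midpoint})$ in the region where $T'$ is bounded); (3) finally bound $\int |h(\tfrac{x+T(x)}2) - h(x)|\,\d x$ by a trace-type argument à la Proposition \ref{prop:trace-log-conc}, using that $\tfrac{x+T(x)}2 - x = \tfrac{T(x)-x}2$ and that $\int |T(x)-x|\,\d x$ is small — which follows because $T(0)=0$ and $|T(x)-x|\le \int_0^x|T'(s)-1|\,\d s$, so $\int_{\{T>\varepsilon^{1/2}\}\cup\{f>\varepsilon^{1/2}\}}|T-x|\lesssim \varepsilon^{1/2}$ on the relevant bounded interval, the tails being negligible by log-concavity.

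The main obstacle I anticipate is Step (3): controlling $\int|h(\tfrac{x+T(x)}2)-h(x)|\,\d x$ requires both that $|T(x)-x|$ is small \emph{and} that it is small on the region where $h$ has most of its mass, and the interval where $f,g$ (hence $h$, after Step 2) are non-negligible must be shown to have length $\lesssim \mathrm{poly}(|\log\varepsilon|)$ using Proposition \ref{prop:log-conc}, so that $\int_{\R}|T-x|\,\d x\lesssim \varepsilon^{1/2}$ with only a logarithmic loss — and then one must check this logarithmic factor does not degrade the final exponent, which is guaranteed because we can first cut $f$ and $g$ at height $\varepsilon^{1/2}$ (a constant-mass cut by log-concavity with an $O(\log(1/\varepsilon))$-length support) exactly as in Lemma \ref{thm:cutting-support}. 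Alternatively — and this may be cleaner — one can run Step (3) directly through the BV trace inequality of Proposition \ref{prop:trace-log-conc} applied to $\Phi(x)=\tfrac{T(x)-x}{2}$ against $\d h$, which converts $\int|h(x)-h(\tfrac{x+T(x)}2)|$ into $\int h(x)|T'(x)-1|/2\,\d x\lesssim \varepsilon^{1/2}$ by Step (1); I would pursue this route, as it avoids estimating the support length altogether. Combining Steps (2) and (3) gives \eqref{eq:h-f-close}, and then $\int|h-g|\le\int|h-f|+\int|f-g|\lesssim\varepsilon^{1/2}$ follows by the triangle inequality and the hypothesis.
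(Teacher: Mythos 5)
Your strategy diverges substantially from the paper's and contains two genuine gaps, the second of which is fatal.

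First, in Step (1), the claim $\int_{\{T'\notin(1/2,2)\}} f\,|T'-1|\lesssim\varepsilon^{1/2}$ is not justified by \eqref{eq:first-eps-bound} alone. On the region where $T'$ is large, $\frac{(1-\sqrt{T'})^2}{\sqrt{T'}}\sim\sqrt{T'}$ while $|T'-1|\sim T'$, so the integrand in \eqref{eq:first-eps-bound} is \emph{much smaller} than $f\,|T'-1|$; neither Cauchy--Schwarz nor the smallness of the $f$-mass of the bad set rescues this, since $\int f\,T'$ may be large a priori. The Cauchy--Schwarz trick of Proposition \ref{prop:f-g-dist} is applied there only on the bi-Lipschitz region; the bad region is handled by a separate measure estimate that does not control the weight $|T'-1|$. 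In the paper's actual proof one never needs $\int f\,|T'-1|$; such a bound becomes available only later, via Lemma \ref{lemma:bounds}, which requires \emph{both} $f$ and $g$ log-concave and an a priori tail cut.

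Second, and more seriously, Step (3) does not go through. Proposition \ref{prop:trace-log-conc} requires the function against whose variation one integrates to be increasing on $(-\infty,0)$ and decreasing on $(0,\infty)$. In the case where only $f$ and $g$ are assumed log-concave, $h$ satisfies just \eqref{eq:PL-condition}--\eqref{eq:PL-almost-eq} and need not be unimodal at all, so $|\mathrm{d}h|$ has no sign structure to exploit and the trace argument collapses. Even when $h$ is log-concave, the normalizations you adopted (peak of $f$ at $0$, $T(0)=0$) do not place the mode of $h$ at $0$, so $\Phi(0)=0$ is not paired with the monotonicity hypothesis on $h$, and the integration-by-parts in Proposition \ref{prop:trace-log-conc} picks up uncontrolled boundary terms. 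This is exactly the obstruction that the paper's proof is designed to dodge: rather than comparing $h$ with $f$ via the transport map and $h$'s own variation, it introduces the \emph{explicit} surrogate $\tilde h\bigl(\tfrac{x+T(x)}{2}\bigr)=\sqrt{f(x)g(T(x))}$. Pointwise $h\ge\tilde h$ with $\int(h-\tilde h)\le\varepsilon$, and the comparison $\int(\tilde h-g)$ is then carried out by changing variables and using log-concavity of $g$ alone, through the algebraic identity
\begin{equation*}
\sqrt{\tfrac{g(T(x))}{f(x)}}\cdot\tfrac{1+T'(x)}{2}=\tfrac{1+T'(x)}{2\sqrt{T'(x)}}=1+\tfrac{(1-\sqrt{T'(x)})^2}{2\sqrt{T'(x)}},
\end{equation*}
which reduces everything to $\int|f-g|$ plus the right-hand side of \eqref{eq:first-eps-bound}, with no boundedness of $T'$ and no assumption whatsoever on $h$. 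If you want to repair your argument in the spirit you proposed, you would essentially be forced to reinvent this auxiliary function; as written, it does not cover the case $h$-log-concave-only, and it relies on the unjustified Step (1) estimate even in the other case.
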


\begin{proof} We suppose first that $f,g$ are both log-concave. Let then $A_f := \supp(f), A_g := \supp(g).$ We start by defining the auxiliary function $\tilde{h}:\R \to \R_+$ by $\tilde{h}(x) = 0$ if $x \not\in \frac{1}{2}\left( A_f + A_g\right),$ and by 
\[
\tilde{h}\left( \frac{x+T(x)}{2} \right) = \sqrt{f(x) g(T(x))}
\]
otherwise. It then follows at once from the definitions that $h \ge \tilde{h}$ pointwise. Furthermore, note that \eqref{eq:proof-PL} applies verbatim by replacing $h$ by $\tilde{h}$. Hence, 
\[
\int|h(x) - \tilde{h}(x)| \, \d x = \int_{\R} (h(x) - \tilde{h}(x)) \, \d x \le \varepsilon.
\]
In order to prove \eqref{eq:h-f-close}, we only have to prove that $\tilde{h}$ is $\varepsilon-$close to $f.$ In order to do that, note that we instantly have 
\[
\int_{\R} \left( \tilde{h}(x) - g(x) \right) \, \d x \le \varepsilon. 
\]
Thus, let $\tilde{A} = \{ x \in \R \colon \tilde{h}(x) > g(x)\}$, and $B$ its counter-image under $\frac{x+T(x)}2$. Note that $\tilde{A} \subset \frac{1}{2} \left( A_f + A_g\right).$ Moreover, we have 
\begin{align*}
\int_{\tilde{A}} \left( \tilde{h}(x) - g(x) \right) \, \d x & = \int_{B} \left( \tilde{h}\left( \frac{x+T(x)}{2}\right) - g\left( \frac{x+T(x)}{2} \right) \right) \cdot \frac{1+T'(x)}{2} \, \, \d x \cr 
 & \le \int_{B} \left( \sqrt{f(x) g(T(x))} - \sqrt{g(x)g(T(x))}\right) \cdot \frac{1+T'(x)}{2} \, \d x \cr 
 & \le \int_{B} \left(f(x) - g(x)\right) \frac{\sqrt{g(T(x))}}{\sqrt{f(x)}} \cdot \frac{1+T'(x)}{2} \, \d x \cr 
 & = \int_{B} \left(f(x) - g(x)\right) \left( 1 + \frac{(1-\sqrt{T'(x)})^2}{2\sqrt{T'(x)}} \right) \, \d x \cr 
 & \le \int_{\R} |f(x) - g(x)| \, \d x + \int_{\R} f(x) \frac{(1-\sqrt{T'(x)})^2}{2\sqrt{T'(x)}}  \, \d x \cr 
 & \le \int_{\R} |f(x) - g(x)| \, \d x + \varepsilon. 
\end{align*}    
Here, we used the change of variables $x \mapsto \frac{x+T(x)}{2}$, taking $\tilde{A}$ to its image under the inverse of that change-of-variables map in the first passage; then we used the definition of $\tilde{h}$ and the log-concavity of $g$ in the second line, and from that point on we simply used the previous considerations and the hypothesis of the lemma. 

Now, for the case where $h$ is log-concave, we consider $\tilde{f},\tilde{g}$ the log-concave hulls of $f,g,$ respectively. Then it follows that, since $h$ is log concave, 
\[
h\left( \frac{x+y}{2} \right) \ge \sqrt{\tilde{f}(x) \tilde{g}(y)}, \qquad \forall \, x,y \in \R. 
\]
Since $\int_{\R} h \le 1+\varepsilon,$ and since, by definition, we have $\tilde{f} \ge f, \tilde{g} \ge g$ pointwise, then 
\[
\int_{\R} |f(x) - \tilde{f}(x)| \, \d x + \int_{\R} |g(x) - \tilde{g}(x)| \, \d x \le \varepsilon.
\]
Upon normalizing $(\tilde{f},\tilde{g}) \mapsto (\mathfrak{f},\mathfrak{g}):= \left( \frac{\tilde{f}}{\int_{\R} \tilde{f}}, \frac{\tilde{g}}{\int_{\R} \tilde{g}}\right),$ we have that $h,\mathfrak{f},\mathfrak{g}$ satisfy \eqref{eq:PL-condition} and \eqref{eq:PL-almost-eq}. It follows that 
\begin{equation}
\label{eq:f mathfrak}
\biggl| \int_{\R} |\mathfrak{f}(x) - \mathfrak{g}(x)| \, \d x - \int_{\R} |f(x) - g(x)| \, \d x\biggr| \lesssim \varepsilon.
\end{equation}
Hence, the conclusion in this case follows from the first case we treated, finishing thus the proof.
\end{proof}

We can now start with the proof of Theorem \ref{thm:log-conc-sharp}.
We shall first consider the case when $f,g$ are log-concave, and then the case when $h$ is log-concave.

\vspace{2mm}

\noindent
$\bullet$ {\bf Case 1: $f,g$ are log-concave.}
We begin with the following result.

\begin{lemma}\label{lemma:bounds} Let $x_1,x_2,y_1,y_2$ be defined such that $y_i = T(x_i), \, i=1,2,$ and 
    \[
    \int_{-\infty}^{x_1} f = \int_{x_2}^{\infty} f = 8 \varepsilon < 1/6.
    \]
    Assume that $f$ and $g$ are log-concave.
    Then, for $x\in(x_1,x_2)$ we have $T'(x) < 16,$ and for $y \in (y_1,y_2)$ we have $S'(y) < 16.$ 
\end{lemma}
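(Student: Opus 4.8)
The plan is to exploit inequality \eqref{eq:first-eps-bound}, namely $\varepsilon \ge \int_{\R} f(x) \frac{(1-\sqrt{T'(x)})^2}{2\sqrt{T'(x)}}\,\d x$, to force $T'$ to be bounded on a controlled portion of the line, and then to upgrade this to a pointwise bound on all of $(x_1,x_2)$ using the concavity properties of $T$ that come from $f$ and $g$ being log-concave. First I would observe that by \eqref{eq:first-eps-bound}, for any threshold $M>1$, the set where $T'(x) \ge M$ carries $f$-mass at most $c\varepsilon\sqrt M/(\sqrt M-1)^2$, which for $M=16$ is $\lesssim \varepsilon$; a symmetric argument with $S$ (using \eqref{eq:deficit-estimate-2}-type bound, or rather the analogous $\varepsilon \ge \int g(y)\frac{(1-\sqrt{S'(y)})^2}{2\sqrt{S'(y)}}\,\d y$) controls the $g$-mass of $\{S' \ge 16\}$. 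So there is a point $x_* \in (x_1,x_2)$, indeed one can be found in any subinterval carrying $f$-mass bigger than this, with $T'(x_*) < 16$.

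The key step is then the monotonicity of $T'$. Since $f = g(T)\,T'$ a.e. and $f,g$ are log-concave, writing $u = -\log f$, $v=-\log g$ (convex on their supports), the relation becomes $\log T'(x) = v(T(x)) - u(x)$. Differentiating, $\frac{T''}{T'} = v'(T(x))T'(x) - u'(x)$. On the decreasing branch of $f$, i.e. for $x$ past the maximum $0$ of $f$, $u'\ge 0$; and since $T$ is increasing and $T(0)=0$, for $x>0$ we have $T(x)>0$, so — provided $g$ also has its maximum at $0$, or at least that we are on the decreasing side of $g$ — $v'(T(x))\ge 0$. That alone doesn't immediately give a sign, so instead I would argue more robustly: log-concavity of $f$ and $g$ makes $\log f$ and $\log g$ concave, hence $\log T'(x) = \log f(x) - \log g(T(x))$ is a difference of a concave function and a concave function composed with an increasing map — this is not automatically monotone, so the cleaner route is the one used classically (Ball–Böröczky): on the interval $(x_1,x_2)$, compare $T$ with the linear map and use that $\int_{-\infty}^{x_1} f = \int_{y_1}^{\infty}$-type mass identities pin down $y_1,y_2$ relative to $x_1,x_2$, then use log-concavity to sandwich $g$ between exponentials via Proposition \ref{prop:log-conc}, turning $T' = f/g(T)$ into an explicitly controlled ratio.

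Concretely, the steps I would carry out: (1) from \eqref{eq:first-eps-bound} and its $S$-analogue, extract a point $x_*\in(x_1,x_2)$ with $T'(x_*)<16$ (say $T'(x_*)$ close to the "average" value near $1$), using that the bad set has $f$-mass $\lesssim\varepsilon < 8\varepsilon$; (2) use Proposition \ref{prop:log-conc} applied to $f$ at the level sets corresponding to $x_1,x_2$ (where $\int_{x_2}^\infty f = 8\varepsilon$) to get two-sided exponential control $f(x_2)e^{-\alpha|x-x_2|} \le f(x) \le f(x_2)e^{\alpha|x-x_2|}$ with $\alpha = f(x_2)/(8\varepsilon)$ on a neighbourhood, and likewise for $g$ near $y_2$; (3) translate the identity $\int_{-\infty}^{x}f = \int_{-\infty}^{T(x)}g$ at $x=x_2$ (and $x=x_1$) into the statement that the tail masses match, so $\int_{y_2}^\infty g = 8\varepsilon$ as well, giving $g(y_2)=g(T(x_2))$ comparable to $f(x_2)$ up to the factor $T'(x_2)$; (4) combine (2)-(3) to see $T'$ cannot jump above $16$ anywhere on $(x_1,x_2)$ without violating either the mass constraint or the exponential bounds, and conclude; the symmetric statement for $S'$ on $(y_1,y_2)$ follows by exchanging the roles of $f$ and $g$.

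The main obstacle I anticipate is step (4): converting the "good at one point plus log-concavity" information into a uniform bound on the whole interval. The subtlety is that $T'$ large on a sub-interval is compatible with $T'$ small elsewhere, so a purely local argument fails; one genuinely needs the global mass identity $\int_{x_1}^{x_2} f = 1-16\varepsilon$ together with $\int_{y_1}^{y_2} g = 1 - 16\varepsilon$ and the pointwise exponential envelopes of Proposition \ref{prop:log-conc} to see that if $T'$ exceeded $16$ somewhere in $(x_1,x_2)$ then $g$ would have to be so much smaller than $f$ on a macroscopic set that the two masses could not both equal $1-16\varepsilon$. Making this quantitative — choosing the constant $8$ in $8\varepsilon$ and the threshold $16$ so the numerology closes — is where the real care lies, but it is a bounded computation rather than a conceptual difficulty.
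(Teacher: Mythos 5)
Your plan collects the right tools---Proposition \ref{prop:log-conc}, the deficit inequality \eqref{eq:first-eps-bound}, and the tail-mass matching between $f$ and $g$---but it is organized around a mechanism the paper does not use, and the step you flag as ``a bounded computation rather than a conceptual difficulty'' is in fact where the conceptual content lives. Your step (1) (locating a good point $x_*$ with $T'(x_*)<16$) is never needed, and, as you yourself note, ``propagating'' such a good point is obstructed by the fact that $\log T' = \log f - \log g\circ T$ has no sign on its second derivative. The remedy you then propose---applying Proposition~\ref{prop:log-conc} at the \emph{endpoints} $x_1,x_2$ to get exponential envelopes, and then invoking the global mass identities $\int_{x_1}^{x_2}f=\int_{y_1}^{y_2}g=1-16\varepsilon$---does not close either: mass balance alone does not preclude $T'$ from being large on a set that $g$ can compensate for elsewhere, and it has no quantitative hook to the constant $16$.

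The paper runs the argument in the opposite direction, by contradiction, and applies Proposition~\ref{prop:log-conc} \emph{at the alleged bad point} rather than at the endpoints. Suppose $T'(x)\ge 16$ for some $x\in(x_1,x_2)$ with tail mass $\nu=\int_x^\infty f$; then $g(T(x))\le f(x)/16$ by $f=g(T)\,T'$. Part (i) of Proposition~\ref{prop:log-conc} (for $f$, at $x$) gives $f(t)>f(x)/2$ on the interval $\bigl(x,\,x+\log 2\cdot \nu/f(x)\bigr)$, while part (ii) (for $g$, at $T(x)$, whose right tail mass is also $\nu$) gives $g(T(t))<2\,g(T(x))$ there. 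Combining, $g(T(t))<f(t)/4$, i.e.\ $T'(t)>4$ on this entire interval, whose $f$-mass is $\gtrsim\nu$. Feeding this into \eqref{eq:first-eps-bound} (whose integrand is $\ge 1/4$ where $T'>4$) forces $\varepsilon\gtrsim\nu$, contradicting $\nu\ge 8\varepsilon$ coming from $x\in(x_1,x_2)$. This ``bad point forces $T'$ large on a macroscopic $f$-mass interval, which costs too much deficit'' propagation is the missing idea: it replaces your step (4) entirely, makes your step (1) superfluous, and explains exactly why the constants $8\varepsilon$ and $16$ appear --- they must be tuned so that the final $\varepsilon\gtrsim\nu\ge 8\varepsilon$ inequality is contradictory. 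You were heading toward the right ingredients, but without applying Proposition~\ref{prop:log-conc} at the bad point and routing the contradiction through \eqref{eq:first-eps-bound} (rather than through mass balance), the argument does not close.
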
 

    \begin{proof}[Proof of Lemma \ref{lemma:bounds}] We suppose, for the sake of a contradiction, that there is $x \in (x_1,x_2)$ with $T'(x) \ge 16.$ Suppose without loss of generality that $\int_x^{\infty} f = \nu < \frac{1}{2}.$ This implies, by Proposition \ref{prop:f-g-transp}, that $\frac{f(x)}{16} \ge g(T(x)).$ From Proposition \ref{prop:log-conc}, part (i), we have that $f(t) > \frac{f(x)}{2} $ for $t \in (x,x+\log(2) \nu/f(x)).$ On the other hand, Proposition \ref{prop:log-conc}, part (ii) implies that $g(T(t)) < 2 g(T(x))$ for $t \in (x,x+\log(2) \nu/f(x)).$ Hence, we obtain that 
\[
g(T(t)) < 2 g(T(x)) \le \frac{f(x)}{8} \le \frac{f(t)}{4} = \frac{g(T(t)) T'(t)}{4}.
\]
Hence, $T'(t) > 4$ for such $t,$ and thus 
\[
\varepsilon \ge \int_{\R} \frac{(1-\sqrt{T'(t)})^2}{2\sqrt{T'(t)}}f(t) \, \d t > \frac{1}{4} \frac{f(x)}{2} \frac{\log(2) \nu}{f(x)} = \frac{\log(2)}{8} \nu. 
\]
Since $x \in (x_1,x_2),$ we must have $\nu \ge 8 \varepsilon,$ a contradiction. Thus, $T'(x) < 16,$ as desired. 

The estimate for $S'$ is analogous.
\end{proof} 

Consider then $f_1 = 1_{(x_1,x_2)} f, g_1 = 1_{(y_1,y_2)} g.$ We notice that the transport map $T_1$ between $g_1$ and $f_1$ coincides with $T$ on $(x_1,x_2).$ Moreover, we have that $T'$ is bounded from above and from below by absolute constants in the interval $(x_1,x_2).$

We now have all the ingredients needed for our proof: by Lemma \ref{lemma:close-f-g-h}, we only need to show that $\int |f-g| \lesssim \sqrt{\varepsilon}.$ To do that, we only need to prove, in turn, that
	\[
	\int|f_1 - g_1| \lesssim \sqrt{\varepsilon},
	\]
 since $f_1,g_1$ are defined by cutting off a tail of size at most $C\varepsilon$ of $f,g$, respectively. In order to do it, we shall resort to the same overall strategy of proof of Proposition \ref{prop:f-g-dist}, but now made even \emph{more precise}, thanks to Lemma \ref{lemma:bounds}. 
	
 Let then $S$ be the inverse map of $T.$ Thus, we may write 
	\begin{align*}
	\int|f_1(x) - g_1(x)| \, \d x &= \int |f_1(x) - f_1(S(x)) \cdot S'(x)| \, \d x \cr 
	 				    &\le \int |f_1(x) - f_1 \circ S(x)| \, \d x + \int |f_1 (S(x))| |S'(x) - 1| \, \d x \cr 
	 				    & = \int |f_1(x) - f_1 \circ S(x)| \, \d x + \int |f_1(y)| |T'(y) - 1| \, \d y.
	\end{align*}
   We now claim that 
   \[
   \int |f_1(x) - f_1 \circ S(x)|\, \d x \le 2 \int_{\R} |f_1(s)| |T'(s) - 1| \, \d s.
   \]
   Indeed, we may write 
   \begin{align}
      & \int |f_1(x) - f_1 \circ S(x)|\, \d x  = \int_{\{x \colon S(x) > x\}}  |f_1(x) - f_1 \circ S(x)|\, \d x \cr 
      & +  \int_{\{x \colon S(x) < x\}} |f_1(x) - f_1 \circ S(x)|\, \d x =: I_1 + I_2. 
   \end{align}
   The desired bound will follow by bounding both $I_1$ and $I_2$ by the asserted quantity. Since the treatment of both terms is the same,  we will only deal with the first.  We have then that 
   
  \begin{align*}
  \int_{\{x \colon S(x) > x\}} |f_1(x) - f_1 \circ S(x)| \, \d x & = \int_{\{x \colon S(x) > x\}} \left| \int_{x}^{S(x)}  \, \d f_1(s) \right| \, \d x \cr 
  & \le \int_{\R} \left( \int_{\R} 1_{\{ x \colon S(x) > s> x \}}  \d x \right) \, |\d f_1|(s) \cr 
   & = \int_{\R} \left( \int_{\{ x \colon s > x > T(s) \}} \d x \right) \, |\d f_1| (s)  \cr
   & \le \int_{\R} |T(s) - s| \, |\d f_1|(s) \le \int_{\R} |f_1(s)| |T'(s) - 1| \, \d s.  \cr
  \end{align*} 
Here, we have used the fundamental theorem of calculus for $f_1$ in the first equality, Fubini's theorem in the passage from the first line to the second, and Proposition \ref{prop:trace-log-conc} in the last inequality. Thus, we obtain that 
\[
\int |f_1(x) - g_1(x)| \, \d x \le 3 \int_{\R} |f_1(s)| |T'(s) - 1| \, ds. 
\]
Since Lemma \ref{lemma:bounds} implies that $T' \le 16$ in the support of $f_1,$ we have from \eqref{eq:first-eps-bound} and the Cauchy-Schwarz inequality that 
\begin{align*}
\int f_1(s) |1-T'(s)| \, \d s & \le  \left(\int f_1(s) |1-T'(s)|^2 \, \d s \right)^{1/2} \left( \int f_1(s)^2 \, \d s \right)^{1/2}  \cr  
   									& \le 15 \cdot \left( \int f_1(x) \frac{(1-\sqrt{T'(x)})^2}{2 \sqrt{T'(x)}} \, \d x \right)^{1/2} \le 15 \varepsilon^{\frac{1}{2}}. 
\end{align*}
Thus, we have from the previous considerations that 
\[
\int|f(x)-g(x)| \, \d x \le 32 \varepsilon + \int|f_1(x) - g_1(x)|\, \d x \le 32 \varepsilon + 45 \varepsilon^{1/2}.
\]
This, together with Lemma \ref{lemma:close-f-g-h}, concludes our proof of Theorem \ref{thm:log-conc-sharp} when $f$ and $g$ are log-concave. 

\vspace{2mm}

\noindent
$\bullet$ {\bf Case 2: $h$ is log-concave.}
Consider the log-concave functions $\mathfrak{f}$ and $\mathfrak{g}$ defined in the proof of Lemma~\ref{lemma:close-f-g-h}. Applying Case 1 to these functions, we deduce that 
\[
\int_{\R} |\mathfrak f(x) - \mathfrak g(x)|\, \d x  \le C \varepsilon^{1/2}.
\]
Combining this bound with \eqref{eq:f mathfrak} and Lemma \ref{lemma:close-f-g-h}, we conclude the proof.

\subsection{Proof of Theorem \ref{thm:radial-sharp} for $\lambda = 1/2$.} 
Let now $h,f,g:\R^n \to \R$ be radial functions satisfying $h\left( \frac{x+y}{2} \right) \ge \sqrt{f(x)g(y)}$ for all $x,y \in \R^n.$ Suppose, moreover, that
\[
\int_{\R^n} h(x) \, \d x \le (1+\varepsilon) \left( \int_{\R^n} f(x) \, \d x \right)^{1/2} \left( \int_{\R^n} g(x) \, \d x\right)^{1/2},
\]
and let $T:\R^n\to\R^n$ denote the transport map between the probability measures $f(x) \, \d x$ and $g(x) \, \d x.$  Since $f$ and $g$ are radial,  $T$ has the form 
$$
T(x) = \mathcal{T}(|x|) \cdot \frac{x}{|x|}.
$$
Thus, since we have that $f(x) = g(T(x))\cdot \det (DT(x)),$ a straightforward computation shows that 
\[
f(|x|) = g(\mathcal{T}(|x|)) \cdot \frac{\mathcal{T}'(|x|) \mathcal{T}(|x|)^{n-1}}{|x|^{n-1}},\qquad \forall x \in \R^n.
\]
Note that $\T$ is then nothing but the transport map between the measures $f(r) r^{n-1} \, \d r$ and $g(r) r^{n-1} \, \d r.$ As those densities are log-concave as long as $f$ and $g$ are log-concave, it follows that, for $x_0,y_0 \in \R_+, \, y_0 = \T(x_0),$ such that 
\[
\int_{x_0}^{\infty} f(r) r^{n-1} \, \d r = \int_{y_0}^{\infty} g(r) r^{n-1} \, \d r =  \tilde{c}_n \varepsilon < 1/6,
\]
we have that $\T'(r) \in (1/16,16).$ Hence, we may suppose this in the upcoming steps of our proof. 

Our next step is, once more, to redo the optimal transport proof of the Pr\'ekopa--Leindler inequality: indeed, if we suppose that $\int f = \int g = 1,$ we have that 
\begin{align*}
1 & = \int_{\R^n} f(x) \, \d x = c_n \int_{0}^{\infty} \sqrt{f(r) g(\T(r)) \T'(r) \T(r)^{n-1} r^{n-1}} \, \d r \cr 
  & \le c_n \int_0^{\infty} h\left( \frac{r+\T(r)}{2}\right) \sqrt{\frac{\T'(r) \T(r)^{n-1}}{r^{n-1}}} \, r^{n-1} \, \d r \cr 
  & \le \int_0^{\infty}  h\left( \frac{r+\T(r)}{2}\right) \left(\frac{\T(r)+r}{2r}\right)^{n-1} \frac{1+\T'(r)}{2} \, r^{n-1} \, \d r = \int_0^{\infty} h(x) \, \d x. 
\end{align*}
Arguing exactly as in the proof of Theorem \ref{thm:log-conc-sharp}, we obtain that 
\begin{align*}
\varepsilon & \ge c_n \int_0^{\infty}  h\left( \frac{r+\T(r)}{2}\right) \left( \left(\frac{\T(r)+r}{2r}\right)^{n-1} \frac{1+\T'(r)}{2} - \sqrt{\frac{\T'(r) \T(r)^{n-1}}{r^{n-1}}} \right) \, r^{n-1} \, \d r \cr 
            & \ge  c_n \int_0^{\infty} \frac{f(r)}{\sqrt{\frac{\T'(r) \T(r)^{n-1}}{r^{n-1}}}}  \left( \left(\frac{\T(r)+r}{2r}\right)^{n-1} \frac{1+\T'(r)}{2} - \sqrt{\frac{\T'(r) \T(r)^{n-1}}{r^{n-1}}} \right) \, r^{n-1} \, \d r. 
\end{align*}
In order to move on with the proof, we need to use the following claim:

\begin{lemma}\label{eq:lemma-square} Let $a,b \in (1/16,16).$ There exists an absolute, dimensional constant $c_n > 0$ such that
\begin{equation}\label{eq:difference-lemma}
\left( \frac{a+1}{2}\right)^{n-1} \frac{1+b}{2} - \sqrt{b a^{n-1}} \ge c_n \left( \sqrt{b a^{n-1} } - 1\right)^2.
\end{equation}
\end{lemma}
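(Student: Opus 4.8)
The plan is to pass to the variables $\sigma=\sqrt a$, $v=\sqrt b$ and to abbreviate
\[
p=\Big(\tfrac{a+1}{2}\Big)^{n-1},\qquad q=\tfrac{1+b}{2},\qquad u=a^{(n-1)/2}=\sigma^{n-1},
\]
so that $\sqrt{ba^{n-1}}=uv$ and \eqref{eq:difference-lemma} becomes $pq-uv\ge c_n(uv-1)^2$. When $n=1$ one has $p\equiv u\equiv 1$, and the left-hand side equals $\tfrac12(1-v)^2=\tfrac12(uv-1)^2$, so $c_1=\tfrac12$; hence I would assume $n\ge2$ from now on, so that $\sigma,v\in(1/4,4)$ range over a fixed compact subinterval of $(0,\infty)$. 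The arithmetic--geometric mean inequality gives $p\ge u>0$ and $q\ge v>0$.

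The key observation is the \emph{telescoping identity}
\[
pq-uv=p(q-v)+v(p-u)\ \ge\ u(q-v)+v(p-u)\ \ge\ 0,
\]
the middle step using $p\ge u$ and $q-v\ge0$. This reduces the problem to controlling the two elementary scalar gaps. On the one hand, $q-v=\tfrac12(1-v)^2$. On the other hand, setting $x=\tfrac{\sigma^2+1}{2}\ge\sigma=:y$ and keeping only the first (nonnegative) summand of $x^{n-1}-y^{n-1}=(x-y)\sum_{j=0}^{n-2}x^jy^{n-2-j}$,
\[
p-u=\Big(\tfrac{\sigma^2+1}{2}\Big)^{n-1}-\sigma^{n-1}\ \ge\ (x-y)\,y^{n-2}=\tfrac12(1-\sigma)^2\,\sigma^{n-2}.
\]
Since $\sigma\in(1/4,4)$ we have $u=\sigma^{n-1}\ge4^{1-n}$, $\sigma^{n-2}\ge4^{2-n}$, and $v\ge\tfrac14$, so the two previous displays combine to
\[
pq-uv\ \ge\ \tfrac12u(1-v)^2+\tfrac12v\,\sigma^{n-2}(1-\sigma)^2\ \ge\ c_n'\big[(1-v)^2+(1-\sigma)^2\big]
\]
for an explicit dimensional constant $c_n'>0$.

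The final step is to bound the right-hand side of \eqref{eq:difference-lemma} by the same two squares. Writing $uv-1=\sigma^{n-1}(v-1)+(\sigma^{n-1}-1)$ and using $\sigma^{n-1}\le4^{n-1}$ together with $|\sigma^{n-1}-1|=|\sigma-1|\,\big|\sum_{j=0}^{n-2}\sigma^j\big|\le(n-1)4^{n-2}|\sigma-1|$, one obtains $|uv-1|\le\Lambda_n\big(|1-v|+|1-\sigma|\big)$ with $\Lambda_n=\max\{4^{n-1},(n-1)4^{n-2}\}$, hence $(uv-1)^2\le2\Lambda_n^2\big[(1-v)^2+(1-\sigma)^2\big]$; combining with the previous display gives \eqref{eq:difference-lemma} with $c_n=c_n'/(2\Lambda_n^2)$. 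I do not expect a real obstacle in this lemma: the only mildly non-obvious point is the telescoping identity $pq-uv=p(q-v)+v(p-u)$, which localizes the whole deficit into the scalar gaps $\tfrac{1+b}{2}-\sqrt b$ and $\big(\tfrac{a+1}{2}\big)^{n-1}-a^{(n-1)/2}$; everything after that is routine bookkeeping exploiting that all factors stay bounded above and below because $a,b\in(1/16,16)$. (One could instead invoke compactness: $F(a,b):=\big(\tfrac{a+1}{2}\big)^{n-1}\tfrac{1+b}{2}-\sqrt{ba^{n-1}}$ is nonnegative on $[1/16,16]^2$ and vanishes only at $(1,1)$, where Taylor expansion gives $F\approx\tfrac{n-1}{8}(a-1)^2+\tfrac18(b-1)^2$ against $(\sqrt{ba^{n-1}}-1)^2\approx\tfrac14\big((n-1)(a-1)+(b-1)\big)^2$, so a Cauchy--Schwarz comparison handles a neighbourhood of $(1,1)$ and continuity handles the rest --- but that argument yields only a non-explicit constant.)
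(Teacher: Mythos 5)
Your proof is correct, and it takes a genuinely different and more elementary route than the paper's. The paper first disposes of the region where $(a,b)$ is bounded away from $(1,1)$ by a crude lower bound, then near $(1,1)$ rewrites the left-hand side as $\sqrt{ba^{n-1}}\bigl(F(\sqrt b,\sqrt a)-1\bigr)$ with $F(s,t)=\bigl(\tfrac{t^{-1}+t}{2}\bigr)^{n-1}\tfrac{s^{-1}+s}{2}$ and applies Taylor's theorem with integral remainder, exploiting $\partial_s F(1,1)=\partial_t F(1,1)=0$, $\partial_s^2F,\partial_t^2F\gtrsim 1$, and $|\partial_s\partial_tF|\lesssim\delta_0$ locally --- exactly the ``compactness'' alternative you sketch in your parenthetical, which indeed yields only a non-explicit constant. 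Your main argument instead localizes the deficit via the telescoping identity $pq-uv=p(q-v)+v(p-u)\ge u(q-v)+v(p-u)$, reducing everything to the two exact scalar gaps $q-v=\tfrac12(1-\sqrt b)^2$ and $p-u\ge\tfrac12(1-\sqrt a)^2\,\sigma^{n-2}$ (the latter by discarding all but the $j=0$ term of the factorization of $x^{n-1}-y^{n-1}$). This is cleaner: it works uniformly on the whole box $a,b\in(1/16,16)$ without splitting into near/far regimes, it avoids computing and sign-controlling second derivatives of $F$, and it produces fully explicit constants $c_n'=\tfrac12 4^{1-n}$ and $\Lambda_n=\max\{4^{n-1},(n-1)4^{n-2}\}$, hence an explicit $c_n$. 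The upper bound on $(\sqrt{ba^{n-1}}-1)^2$ in terms of $(1-\sqrt a)^2+(1-\sqrt b)^2$ is the same in both proofs. All the steps check out, including the $n=1$ base case where $p=u=1$ and the inequality is an identity with $c_1=\tfrac12$.
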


\begin{proof} First, note that, if either $a$ or $b$ is not within, say, $\delta_0$ of $1$ -- where $\delta_0$ will be a small number, to be specified later --, then the left-hand side of \eqref{eq:difference-lemma} is larger than a fixed constant. Indeed, this follows directly by the fact that we have $\left(\frac{a+1}{2}\right)^{n-1} - a^{\frac{n-1}{2}} \ge c_n(a-1)^2$ and $\frac{b+1}{2} -\sqrt{b} \ge c_n(\sqrt{b}-1)^2.$ Hence, we may concentrate on the case in which the pair $(a,b)$ belongs to a $\delta_0-$neighborhood of $(1,1),$ viewed as elements of $\R^2.$ 

In that case, rewrite the left-hand side of \eqref{eq:difference-lemma} as 
\[
\sqrt{ba^{n-1}} \left( \left( \frac{\frac{1}{\sqrt{a}}+\sqrt{a}}{2}\right)^{n-1} \left( \frac{\frac{1}{\sqrt{b}} + \sqrt{b}}{2}\right) - 1 \right). 
\]
Since $1 \sim \sqrt{a^{n-1} b}$ for the asserted range, we need only to deal with the expression within the brackets. Let then $F(s,t) = \left( \frac{t^{-1} + t}{2} \right)^{n-1} \frac{s^{-1}+s}{2}.$ Then $\partial_s F(1,1) = \partial_t F(1,1) = 0.$ On the other hand, a direct computation shows that, for $\delta_0$ sufficiently small, we have that 
\[
\partial_t^2 F(s,t), \partial_s^2F(s,t) \ge c_n,
\]
whenever $(s,t)$ belongs to a $C\delta_0-$neighborhood of $(1,1).$ Since we have that 
\[
|\partial_s \partial_t F(s,t)| \lesssim \delta_0
\]
for $(s,t)$ in the same neighborhood, we conclude, by Taylor's theorem with integral remainder, that for such $(s,t)$, we have 
\[
F(s,t) - F(1,1) \ge c_n\left( (t-1)^2 + (s-1)^2\right).
\]
On the other hand, it also holds for such pairs $(s,t)$ that 
\[
\left(t^{n-1}s - 1\right)^2 \lesssim (t-1)^2 + (s-1)^2.
\]
The lemma then follows directly by applying these observations to $s = \sqrt{a}, t = \sqrt{b}.$
\end{proof}

We use Lemma \ref{eq:lemma-square} with $a = \frac{\T(r)}{r}$ and $b = \T'(r).$ We conclude that there exists $c_n > 0$ absolute dimensional constant such that 
\[
\left( \left(\frac{\T(r)+r}{2r}\right)^{n-1} \frac{1+\T'(r)}{2} - \sqrt{\frac{\T'(r) \T(r)^{n-1}}{r^{n-1}}} \right) \ge c_n \left( \sqrt{\frac{\T'(r) \T(r)^{n-1}}{r^{n-1}}} - 1 \right)^2. 
\]
Since $ \sqrt{\frac{\T'(r) \T(r)^{n-1}}{r^{n-1}}} \ge \frac{1}{16},$ we have that 
\begin{equation}
c_n \varepsilon \ge \int_0^{\infty} f(r) \left( \sqrt{\frac{\T'(r) \T(r)^{n-1}}{r^{n-1}}} - 1 \right)^2 \, r^{n-1} \, \d r. 
\end{equation}
We then proceed to estimate the $L^1$ distance between $f$ and $g.$ For $\Ss := \T^{-1},$ we have: 
\begin{align*}
	\int_{\R^n} |f(x) - g(x)| \, \d x &= \int_0^{\infty} \left|f(r) - f(\Ss(r)) \frac{\Ss(r)^{n-1} \Ss'(r)}{r^{n-1}} \right|  \, r^{n-1} \, \d r \cr 
	 				    &\le \int |f(r) - f \circ \Ss(r)| \, r^{n-1} \, \d r + \int |f(s)| |\T'(s) \T(s)^{n-1}  - s^{n-1}|  \, \d s. \cr 
	\end{align*}
We then bound the first term again in analogy to the previous section: in the set where $\Ss(r) > r,$ we have 
\begin{align}\label{eq:Fubini-trick}
\int_{\{\Ss(r) > r\}} |f(r) - f \circ \Ss(r)| r^{n-1} \, \d r  & \le \int_0^{\infty} |f'(r)| |\T(r)^n - r^n| \, \d r \cr 
 & \le \int_0^{\infty} |f(r)| |\T(r)^{n-1} \T'(r) - r^{n-1}| \, \d r. 
\end{align}
In the last inequality, we simply used an integration by parts, as $f$ radial and log-concave implies that $f'$ is decreasing when viewed as a function in $\R_+.$ Moreover, it is evident from the definition of $\T$ that $\T(0) = 0.$ The treatment for the set $\{\Ss(r) < r\}$ is entirely analogous. 

We conclude that it suffices to bound the term on the right-hand side of \eqref{eq:Fubini-trick}. On the other hand, since $\T' \in (1/16,16), \T(r)/r \in (1/16,16),$ we have 
\begin{align*}
\int_0^{\infty} f(r) |\T(r)^{n-1} \T'(r) - r^{n-1}| \, \d r & \le c_n \left( \int_0^{\infty} f(r)r^{n-1} \left| \frac{\T(r)^{n-1} \T'(r)}{r^{n-1}} - 1\right|^2 \, \d r \right)^{1/2} \cr 
 & \lesssim_n \left(\int_0^{\infty} f(r) \left( \sqrt{\frac{\T'(r) \T(r)^{n-1}}{r^{n-1}}} - 1 \right)^2 \, r^{n-1} \, \d r\right)^{1/2} \cr 
 & \lesssim_n \varepsilon^{1/2}. 
\end{align*}
By using the same methods as in the one-dimensional case, we conclude the proof of Theorem \ref{thm:radial-sharp}. 

\subsection{Proof of Theorems \ref{thm:log-conc-sharp} and \ref{thm:radial-sharp} for general $\lambda \in (0,1)$} We now use an argument, originally by K. B\"or\"oczky and A. De, in order to pass from the case $\lambda = 1/2$ to general $\lambda \in (0,1)$ in Theorems \ref{thm:log-conc-sharp} and \ref{thm:radial-sharp}. 

First we note that, by an argument similar to that of Lemma \ref{lemma:close-f-g-h}, we may suppose without loss of generality that \emph{all} functions involved are log-concave, by possibly passing to log-concave hulls. Furthermore, by scaling, we may assume that 
$$ 
\int_{\R} f(x) \, \d x = \int_{\R^n} g(x) \, \d x = 1.
$$
Consider then, for each $t \in (0,1)$, the log-concave function 
\begin{equation}
h_t(z) = \sup_{z = tx + (1-t)y} f(x)^{t} g(y)^{1-t}. 
\end{equation}
As we see from the result below, this new function also has an additional \emph{log-concavity} property in terms of integrals. We refer to \cite[Lemma~7.3]{Boroczky-De} for a proof. 

\begin{lemma}[Lemma~7.3 in \cite{Boroczky-De}]\label{lemma:integral-log-conc} 
Under the hypotheses above, the function $t \mapsto \int_{\R^n} h_t(x) \, \d x$ is \emph{log-concave} in $t \in [0,1]$. 
\end{lemma}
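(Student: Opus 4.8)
The statement to prove is that $t\mapsto \int_{\R^n}h_t(x)\,\d x$ is log-concave on $[0,1]$, where $h_t(z)=\sup_{z=tx+(1-t)y}f(x)^t g(y)^{1-t}$. The plan is to apply the Pr\'ekopa--Leindler inequality itself, but now in the variable $t$. Fix $t_0,t_1\in[0,1]$ and $s\in(0,1)$, and set $t_s=st_0+(1-s)t_1$. The key observation is that there is a pointwise inequality relating $h_{t_s}$ to $h_{t_0}$ and $h_{t_1}$ of exactly the form \eqref{eq:PL-condition}: I claim that
\[
h_{t_s}\bigl(sz_0+(1-s)z_1\bigr)\ge h_{t_0}(z_0)^{s}\,h_{t_1}(z_1)^{1-s},\qquad\forall\,z_0,z_1\in\R^n.
\]
Granting this, Pr\'ekopa--Leindler \eqref{eq:PL-conclusion} applied to the triple $(h_{t_s},h_{t_0},h_{t_1})$ with parameter $s$ gives $\int h_{t_s}\ge\bigl(\int h_{t_0}\bigr)^s\bigl(\int h_{t_1}\bigr)^{1-s}$, which is precisely the asserted log-concavity in $t$.

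The main work is therefore verifying the pointwise inequality. Given $z_0,z_1$, one uses the definition of the sup: for any $\delta>0$ choose $x_0,y_0$ with $z_0=t_0x_0+(1-t_0)y_0$ and $f(x_0)^{t_0}g(y_0)^{1-t_0}\ge h_{t_0}(z_0)-\delta$, and similarly $x_1,y_1$ for $z_1$. The natural candidate for the decomposition of $sz_0+(1-s)z_1$ as $t_sx+(1-t_s)y$ is obtained by taking suitable convex combinations of $x_0,x_1$ and of $y_0,y_1$; the correct weights are forced by matching the identity $sz_0+(1-s)z_1 = s t_0 x_0 + s(1-t_0)y_0 + (1-s)t_1 x_1 + (1-s)(1-t_1)y_1$ and grouping the $x$-terms (total weight $st_0+(1-s)t_1=t_s$) and $y$-terms (total weight $1-t_s$). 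Writing $x=\frac{st_0}{t_s}x_0+\frac{(1-s)t_1}{t_s}x_1$ and $y=\frac{s(1-t_0)}{1-t_s}y_0+\frac{(1-s)(1-t_1)}{1-t_s}y_1$, one has $t_sx+(1-t_s)y=sz_0+(1-s)z_1$, so $h_{t_s}(sz_0+(1-s)z_1)\ge f(x)^{t_s}g(y)^{1-t_s}$. Now log-concavity of $f$ and $g$ gives $f(x)\ge f(x_0)^{st_0/t_s}f(x_1)^{(1-s)t_1/t_s}$ and $g(y)\ge g(y_0)^{s(1-t_0)/(1-t_s)}g(y_1)^{(1-s)(1-t_1)/(1-t_s)}$; raising to the powers $t_s$ and $1-t_s$ respectively and multiplying, the exponents reorganize exactly into $\bigl(f(x_0)^{t_0}g(y_0)^{1-t_0}\bigr)^s\bigl(f(x_1)^{t_1}g(y_1)^{1-t_1}\bigr)^{1-s}\ge (h_{t_0}(z_0)-\delta)^s(h_{t_1}(z_1)-\delta)^{1-s}$. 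Letting $\delta\to0$ gives the claim.

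The step I expect to require the most care is the bookkeeping of the convex-combination weights and the boundary/degenerate cases: when $t_s\in\{0,1\}$ (forced only if $t_0=t_1\in\{0,1\}$, which is trivial) and when some of the suprema defining $h_{t_i}(z_i)$ are not attained or equal $0$ — in the latter case both sides are handled by the $\delta$-approximation and monotone passage to the limit, and if $h_{t_0}(z_0)=0$ or $h_{t_1}(z_1)=0$ the inequality is vacuous. One should also note measurability of $h_t$ (it is upper semicontinuous as a sup of continuous functions composed with the affine constraint, using continuity of log-concave $f,g$ on the interiors of their supports), so that the integrals and the application of Pr\'ekopa--Leindler are legitimate; alternatively, since $f,g$ are already assumed log-concave, $h_t$ is log-concave hence measurable automatically. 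Once the pointwise inequality is in hand, the conclusion is immediate from \eqref{eq:PL-conclusion}.
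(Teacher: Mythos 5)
Your argument is correct, and since the paper does not reproduce a proof of Lemma~\ref{lemma:integral-log-conc} (it only cites Lemma~7.3 of \cite{Boroczky-De}), your self-contained argument is a genuine contribution rather than a reconstruction. The route you take --- establish a pointwise Pr\'ekopa--Leindler-type inequality between $h_{t_s}$, $h_{t_0}$, $h_{t_1}$ in the combined variable $(z,t)$ and then invoke \eqref{eq:PL-conclusion} in $z$ with parameter $s$ --- is exactly the observation that the ``sup-convolution in $(z,t)$'' is log-concave on $\R^{n}\times[0,1]$, so its $z$-marginal is log-concave in $t$ by Pr\'ekopa's theorem. The bookkeeping of the weights $\frac{st_0}{t_s}$, $\frac{(1-s)t_1}{t_s}$, $\frac{s(1-t_0)}{1-t_s}$, $\frac{(1-s)(1-t_1)}{1-t_s}$ is correct (each pair sums to $1$, and $t_s x + (1-t_s)y = sz_0 + (1-s)z_1$), and the log-concavity of $f$ and $g$ --- which your proof crucially relies on --- is available here because the paper has already reduced, just before the lemma, to the case where $f,g$ (and $h$) are log-concave. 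Your treatment of degenerate endpoints, the $\delta$-approximation of the suprema, and measurability of $h_t$ via its log-concavity is all sound. One cosmetic simplification worth noting: once the pointwise inequality is established, you could equally well phrase the conclusion as an application of Pr\'ekopa's marginal theorem to the single $(n+1)$-dimensional log-concave function $(z,t)\mapsto h_t(z)\,\chi_{[0,1]}(t)$; this is the same content but avoids quantifying over pairs $t_0,t_1,s$.
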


Note then that the function 
\[
\varphi(t) = \int_{\R^n} h_t(x) \, \d x 
\]
satisfies $\varphi(0) = \varphi(1) = 1$ by the normalization we adopted for $f$ and $g$, and, by the conditions on $f,g,h$, we conclude that 
\begin{align*}
1 \le \int_{\R^n} h_{\lambda}(x) \, \d x \le \int_{\R^n} h(x) \, \d x \le 1+\varepsilon. 
\end{align*}
Hence, Proposition \ref{prop:interpolation-log-concave} together with Lemma \ref{lemma:integral-log-conc} yield that 
\begin{equation}\label{eq:1/2-stab} 
\varphi\left( \frac{1}{2} \right) = \int_{\R^n} h_{1/2} (x) \, \d x \le 1 + \frac{\varepsilon}{\tau}. 
\end{equation} 
We then divide into two cases: 

\vspace{2mm}

\noindent{\bf $\bullet$  Case 1: the case $\lambda \in (0,1)$ of Theorem \ref{thm:log-conc-sharp}.} From the $\lambda = 1/2$ case of Theorem \ref{thm:log-conc-sharp}, we conclude from \eqref{eq:1/2-stab} that, for some $w \in \R$, 
\begin{align}
\int_{\R} |f(x) - h_{1/2}(x+w)| \, \d x &\le C \left( \frac{\varepsilon}{\tau}\right)^{1/2}, \cr 
\int_{\R} |g(x) - h_{1/2}(x-w)| \, \d x & \le C \left( \frac{\varepsilon}{\tau}\right)^{1/2}. 
\end{align}
Continuing arguing in a way similar to that in Lemma \ref{lemma:close-f-g-h}, we may also conclude that 
\[
\int_{\R} |h(x) - h_{1/2}(x)| \, \d x \le C \left( \frac{\varepsilon}{\tau}\right)^{1/2},
\]
as desired, concluding the proof of Theorem \ref{thm:log-conc-sharp}. 

\vspace{2mm}

\noindent{\bf $\bullet$  Case 2: the case $\lambda \in (0,1)$ case of Theorem \ref{thm:radial-sharp}.} We use again \eqref{eq:1/2-stab}. This, together with the $\lambda = 1/2$ case of Theorem \ref{thm:radial-sharp} already proved, shows that  
\begin{align}
\int_{\R^n} |f(x) - h_{1/2}(x)| \, \d x &\le C_n \left( \frac{\varepsilon}{\tau}\right)^{1/2}, \cr 
\int_{\R^n} |g(x) - h_{1/2}(x)| \, \d x & \le C_n \left( \frac{\varepsilon}{\tau}\right)^{1/2}. 
\end{align}
We then argue one final time as in Lemma \ref{lemma:close-f-g-h}, which allows us to conclude that 
\[
\int_{\R^n} |h(x) - h_{1/2}(x)| \, \d x \le C_n \left( \frac{\varepsilon}{\tau}\right)^{1/2},
\]
finishing the proof of Theorem \ref{thm:radial-sharp} and concluding this part. 

\subsection{Counterexample construction} We now adapt the construction from \cite[Example~1.8]{Boroczky-Figalli-Ramos} in order to show that the assertion of \eqref{eq:almost-eq-conclusion} is sharp: 

\begin{proposition}\label{prop:optimality} There is an absolute constant $c \in(0,1)$ such that the following holds. For any $\varepsilon$ sufficiently small, and any $t \in (0,1)$, there exist log-concave probability densities $f, g$ on $\mathbb{R}$ such that

$$
\int_{\mathbb{R}} \sup _{z=tx + (1-t)y} f(x)^t g(y)^{1-t} \,  \d z<1+\varepsilon,
$$

while

$$
\int_{\mathbb{R}}|g(x)-f(x+x_0)|\,  \d x \geq c \left(\frac{\varepsilon}{\tau}\right)^{\frac{1}{2}} \quad \text { for any } x_0 \in \mathbb{R}.
$$   
\end{proposition}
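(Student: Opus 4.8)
The plan is to saturate the inequality with a one-parameter family of rescaled uniform densities. Fix $t\in(0,1)$, put $\tau=\min(t,1-t)$, and restrict to the regime $\varepsilon\le\tau$ — the only one relevant for the sharpness of Theorem~\ref{thm:log-conc-sharp}, since $\|f\|_1=\|g\|_1=1$ forces $\int_{\R}|g-f(\cdot+x_0)|\le 2$, so $(\varepsilon/\tau)^{1/2}$ must be bounded. For a parameter $s\in(0,1]$ to be fixed at the end, set
\[
f=\mathbf 1_{[-\frac12,\frac12]},\qquad g=\frac{1}{1+s}\,\mathbf 1_{[-\frac{1+s}{2},\frac{1+s}{2}]},
\]
which are log-concave probability densities. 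The idea is that passing from $f$ to $g$ perturbs the $L^1$ profile by an amount \emph{linear} in $s$, whereas the Pr\'ekopa--Leindler deficit of the pair will turn out to be \emph{quadratic} in $s$ and to carry exactly one power of $\tau$.

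First I would compute $h_t(z)=\sup_{z=tx+(1-t)y}f(x)^t g(y)^{1-t}$. Since $f$ and $g$ are constant on their supports, $f(x)^tg(y)^{1-t}=(1+s)^{-(1-t)}$ for $x\in\supp f$, $y\in\supp g$ and vanishes otherwise; hence $h_t=(1+s)^{-(1-t)}\mathbf 1_{K}$, with $K=t\supp f+(1-t)\supp g$ an interval of length $t+(1-t)(1+s)=1+(1-t)s$. Writing $u=1-t$,
\[
\int_{\R}h_t=\frac{1+us}{(1+s)^{u}}=:1+\phi(s),
\]
and a direct differentiation gives $\phi(0)=0$ and $\phi'(s)=\dfrac{u(1-u)\,s}{(1+s)^{u+1}}\ge 0$; thus $\int_{\R}h_t\ge 1$ (consistently with Pr\'ekopa--Leindler), and, using $(1+s)^{u+1}\ge 1$ together with $u(1-u)=t(1-t)\le\tau$,
\[
\int_{\R}h_t-1=\int_0^s\phi'(\sigma)\,\d\sigma\le\frac{t(1-t)}{2}\,s^2\le\frac{\tau}{2}\,s^2 .
\]

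Next I would bound $\int_{\R}|g-f(\cdot+x_0)|$ from below, uniformly in $x_0$: as $\supp f(\cdot+x_0)$ is an interval of length $1$ while $\supp g$ has length $1+s$, the set $\supp g\setminus\supp f(\cdot+x_0)$ has measure $\ge s$, and there $g=(1+s)^{-1}$ while $f(\cdot+x_0)=0$; hence $\int_{\R}|g(x)-f(x+x_0)|\,\d x\ge \frac{s}{1+s}\ge \frac s2$ for every $x_0\in\R$. It then remains only to take $s=(\varepsilon/\tau)^{1/2}\in(0,1]$: the two displays give $\int_{\R}h_t<1+\frac\varepsilon2<1+\varepsilon$ and $\int_{\R}|g(x)-f(x+x_0)|\,\d x\ge\frac12(\varepsilon/\tau)^{1/2}$ for all $x_0$, which is the assertion with $c=\frac12$.

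The essential point — and the only delicate one — is conceptual rather than computational: the profile must be chosen so that the deficit $\phi(s)$ is \emph{quadratic} in the perturbation $s$. A one-sided profile (e.g.\ a rescaled exponential, or a one-sided exponential's Laplace symmetrization, which is piecewise linear) would fail, because the configuration optimizing the definition of $h_t$ then concentrates all the mass on one variable, making the deficit linear in $s$ and destroying the $\varepsilon^{1/2}$ gain; the sharp cut-off above (or, equally well, a Gaussian) avoids this. Tracking the single factor of $\tau$ through $\phi$ is what makes the example match the $\tau^{-1/2}$ of Theorems~\ref{thm:log-conc-sharp} and~\ref{thm:radial-sharp}: if a log-concave $\tilde h$ had $f$ and $g$ both $\eta$-close in $L^1$ (after the affine normalizations of those theorems) to translates of $\tilde h$, then $f$ and $g$ would be $O(\eta)$-close after a single translation, forcing $\eta\gtrsim(\varepsilon/\tau)^{1/2}$, so that the constant $C$ in \eqref{eq:almost-eq-conclusion} cannot be taken arbitrarily small.
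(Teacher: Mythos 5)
Your proof is correct and gives the proposition with the explicit constant $c=\tfrac12$, under the restriction $\varepsilon\le\tau$, which you correctly identify as the only nontrivial regime (the left-hand side is at most $2$, and the paper's own construction has the same implicit restriction since $\delta$ must be small enough for $g=(1+\delta\phi)f$ to stay log-concave).

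The route is genuinely different from the paper's, even though the overall strategy (exhibit a one-parameter perturbation, show the deficit is quadratic in the perturbation and linear in $\tau$) is the same. The paper takes $f(x)=e^{-\pi x^2}$ and $g=(1+\delta\phi)f$ with $\phi$ an odd compactly supported bump, and then needs a genuine analysis of the optimizer $y_z$ in the sup-convolution (critical-point equation, second-derivative bound, Taylor remainder) to reach $\int h\le e^{c\tau\delta^2}$. Your choice of two scaled interval indicators makes the sup-convolution itself an explicit scaled indicator, so $\int h_t=\frac{1+(1-t)s}{(1+s)^{1-t}}$ is given in closed form and the entire estimate reduces to the elementary pointwise bound $\phi'(\sigma)=\frac{t(1-t)\sigma}{(1+\sigma)^{2-t}}\le\tau\sigma$; no smoothness, no implicit optimization, no remainder terms. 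What the paper's version buys instead is that $f$ and $g$ share the same tails (identical outside $[-1,1]$), which is what lets the remark following the proposition pass immediately to the radial higher-dimensional counterexample by replacing the odd bump with an even one supported in an annulus; a radial dilation analogue of your example would need an extra argument to handle the free scalar $a$ in Theorem~\ref{thm:radial-sharp}. One small aside: the paper's closing phrase ``choosing $\delta\sim\varepsilon^{1/2}$'' is imprecise — the choice that actually produces the stated $(\varepsilon/\tau)^{1/2}$ lower bound there is $\delta\sim(\varepsilon/\tau)^{1/2}$, exactly the scaling of your $s$.
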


\begin{proof} We suppose, without loss of generality, that $\tau = t \in (0,1/2).$ We fix $f(x)=e^{-\pi x^{2}},$ and let $\phi$ be an odd $C^{2}$ on $\mathbb{R}$ satisfying $\operatorname{supp} \phi \subset[-1,1]$ and $\max \phi=1$. Note that, since $\phi$ is odd, $\int_{\mathbb{R}} f \phi=0$.

For $\delta$ sufficiently small, to be fixed later, we consider $g=(1+\delta \phi) f,$ so that $\int_{\mathbb{R}} g=1$. We note that there exists an absolute constant $c>0$ such that
\begin{align}\label{eq:derivatives}
\left|\frac{\d}{\d x} [\log (1+\delta \phi)] (x) \right| & =\left|\delta \cdot \frac{\phi^{\prime}}{1+\delta \phi}\right| \leq c \delta, \cr 
\left|\frac{\d^2}{\d x^2} [\log (1+\delta \phi)] (x) \right| & =\left|\delta \cdot \frac{\phi^{\prime \prime}(1+\delta \phi)-\delta\left(\phi^{\prime}\right)^{2}}{(1+\delta \phi)^{2}}\right| \leq c \delta,
\end{align}
for any $\delta \in\left(0, \frac{1}{2}\right)$. In particular, Since $(\log f)^{\prime \prime}=-2 \pi$, it follows that $g$ is $\log$-concave for small enough $\delta$.

Note now that, since $g(x)=f(x)=e^{-\pi x^{2}}$ for $|x| \geq 1$, there exists a constant $c_{0}>0$ such that

\begin{equation}\label{eq:bound-translation}
\int_{\mathbb{R}}|g(x)-f(x+w)| \, \d x \geq \int_{1}^{\infty}\left|e^{-\pi x^{2}}-e^{-\pi(x+w)^{2}}\right| \, \d x \geq c_{0} \min \{|w|, 1\}.
\end{equation}
On the other hand, we have
$$
\int_{\mathbb{R}}|g(x)-f(x+w)| \, \d x \geq \int_{\mathbb{R}}|g(x)-f(x)|-|f(x)-f(x+w)| \, \d x \geq \delta \int_{\mathbb{R}} f(x)|\varphi(x)|\, \d x-c|w| .
$$
Hence, combining this last estimate with \eqref{eq:bound-translation}, we deduce the existence of a constant $c_{1}>0$ such that
\begin{equation}\label{eq:lower-f-g}
\int_{\mathbb{R}}|g(x)-f(x+w)| \, \d x \geq c_{1} \delta \quad \forall w \in \mathbb{R} . 
\end{equation}
Finally, we estimate $\int_{\mathbb{R}} h$ for $h(z)=\sup _{z=tx+(1-t)y} f(x)^t g(y)^{1-t}$. To this aim, consider the auxiliary function $\tilde{h}(z)=f(z)^t g(z)^{1-t}$. Thanks to Hölder's inequality, this satisfies $\int_{\mathbb{R}} \tilde{h} \leq 1$. Since $f$ and $g$ are log-concave and $g(x)=f(x)$ for $|x| \geq 1$, for any $z \in \mathbb{R}$, there exists a point $y_{z} \in \mathbb{R}$ such that $h(z)=f\left(\frac{z}{t}-\frac{(1-t) y_{z}}{t} \right)^t g\left(y_{z}\right)^{1-t}$. Also, $y_{z}=z$ if $|z| \geq 1$, and $\left|y_{z}\right| \leq 1$ if $|z| \leq 1$.

We now observe that, for any $z \in \mathbb{R}$, the function 
$$\psi_{z}(y)=\log \left( f\left(\frac{z}{t}-\frac{(1-t) y}{t} \right)^t g\left(y\right)^{1-t}\right)$$
satisfies $\psi_{z}(z)=$ $\log \tilde{h}(z), \psi_{z}\left(y_{z}\right)=\log h(z)$, and $\psi_{z}$ has a maximum at $y_{z}$. Then, recalling \eqref{eq:derivatives}, we have

$$
0=\psi_{z}^{\prime}\left(y_{z}\right)=2 \pi\frac{1-t}{t} \left(z-y_{z}\right)+(1-t)[\log (1+\delta \phi)]^{\prime}\left(y_{z}\right) \quad \Rightarrow \quad\left|z-y_{z}\right| \leq c \cdot t \cdot \delta .
$$
On the other hand, we have that
\[
\left|\psi_{z}^{\prime \prime}(y)\right| \le C \frac{1-t}{t} + \delta,
\]
which implies, through a Taylor expansion argument, that
$$
\log \frac{h(z)}{\tilde{h}(z)}=\psi_{z}\left(y_{z}\right)-\psi_{z}(z) \leq c \frac{1}{t} \cdot (t\delta)^{2} = c \cdot t \delta^2 \quad \forall z \in \mathbb{R},
$$
for some constant $c>0$. We then conclude that

\begin{equation}\label{eq:final-h}
\int_{\mathbb{R}} h \leq e^{c t\delta^{2}} \int_{\mathbb{R}} \tilde{h} \leq e^{c t\delta^{2}}<1+2c t\delta^{2} \text { for } \delta \text{ sufficiently small.}
\end{equation}
Choosing $\delta \sim \varepsilon^{\frac{1}{2}},$ we have that \eqref{eq:lower-f-g} and \eqref{eq:final-h} imply the claim of the Proposition, as desired. 
\end{proof}

\begin{remark} It can be proved, just like in the one-dimensional case, that the results in Theorem \ref{thm:radial-sharp} are optimal: indeed, instead of picking $\phi$ in the construction of the one-dimensional example to be odd, take $\phi$ to be any \emph{even} function supported in $[1/2,2]$ for which $\int_{\R} f \varphi = 0,$ and let $g(r) = (1+\delta \phi(r))f(r).$ Extend then $f$ and $g$ radially to $\R^n$, denoting by $F_n$ and $G_n$, respectively, these extensions. Since $f$ and $g$ are log-concave, it follows by a simple computation that $F_n$ and $G_n$ are also log-concave. By redoing the same computations as in the one-dimensional case, we may conclude the desired analogue of Proposition \ref{prop:optimality}. We omit the details.
\end{remark} 



\bibliography{biblio} 
\bibliographystyle{amsplain}
\end{document}